\documentclass[11pt,a4paper,reqno, draft]{amsart}
\usepackage{amsmath, amssymb, latexsym, enumerate,  graphicx,tikz, stmaryrd, eufrak,enumitem}%MnSymbol}

\usepackage{thmtools}

\usepackage[pdftex]{hyperref}
\usepackage{cleveref}
\usepackage{bbm}
\usepackage{cases}
\usepackage{array}
\usepackage{tikz-cd}
\usepackage[all]{xy}

\usepackage[hmarginratio={1:1},vmarginratio={1:1},lmargin=80.0pt,tmargin=90.0pt]{geometry}

\usepackage{tikz}
\tikzset{anchorbase/.style={baseline={([yshift=-0.5ex]current bounding box.center)}}}
\usetikzlibrary{decorations.markings}
\usetikzlibrary{decorations.pathreplacing}
\usetikzlibrary{arrows,shapes,positioning,backgrounds}
%%multiple arrow options
\tikzstyle directed=[postaction={decorate,decoration={markings,
    mark=at position #1 with {\arrow{>}}}}]
\tikzstyle rdirected=[postaction={decorate,decoration={markings,
    mark=at position #1 with {\arrow{<}}}}]
    
\usepackage{graphicx}
\usepackage{nicefrac}

\setcounter{tocdepth}{1}

 \newlength{\baseunit}               % the basic unit length
         % width of the picture
          % depth of the picture
                % with between left margin and picture
 \newcount{\numlines}                % depth of picture (in number of lines)
 \setlength{\baseunit}{0.05ex}

%\renewcommand{\thefigure}

%  \newcommand{\getfig}[2] { %           \setlength{\unitlength}{#2\baseunit} %            \input #1.tex }
                         % A macro to input the picture.  Use
                         % \getfig{name}{scale}

% Begin template %\documentclass[12pt]{amsart} %\usepackage{epic,eepic} %\parskip=12pt

\newtheorem{theorem}[subsubsection]{Theorem}
\newtheorem{lemma}[theorem]{Lemma}
\newtheorem{prop}[theorem]{Proposition}
\newtheorem{corollary}[subsubsection]{Corollary}
\newtheorem{conjecture}[theorem]{Conjecture}

\theoremstyle{definition}
\newtheorem{definition}[subsubsection]{Definition}

\newtheorem{remark}[theorem]{Remark}
\newtheorem{notation}[theorem]{Notation}
\newtheorem{example}[subsubsection]{Example}

\newtheorem{question}[theorem]{Question}

\newtheorem{thmA}{Theorem}

\newcommand{\PSh}{\mathrm{PSh}}
\newcommand{\Sh}{\mathrm{Sh}}
\newcommand{\mG}{\mathbb{G}}

\newcommand{\cT}{\mathcal{T}}

\newcommand{\ev}{\mathrm{ev}}
\newcommand{\co}{\mathrm{co}}

\newcommand{\bA}{\mathbf{A}}
\newcommand{\bB}{\mathbf{B}}
\newcommand{\bC}{\mathbf{C}}

\newcommand{\bD}{\mathbf{D}}
\newcommand{\bT}{\mathbf{T}}

\newcommand{\bU}{\mathbf{U}}

\newcommand{\cU}{\mathcal{U}}

\newcommand{\Spec}{\mathrm{Spec}}
\newcommand{\res}{\mathrm{res}}

\newcommand{\charr}{\mathrm{char}}

\newcommand{\cV}{\mathcal{V}}

\newcommand{\Mod}{\mathsf{Mod}}
\newcommand{\Rep}{\mathsf{Rep}}

\newcommand{\Stab}{\mathsf{Stab}}
\newcommand{\Tens}{\mathsf{Tens}}
\newcommand{\PTens}{\mathsf{PTens}}

\newcommand{\sTens}{\mathsf{sTens}}
\newcommand{\sPTens}{\mathsf{sPTens}}

\newcommand{\unit}{{\mathbbm{1}}}
\newcommand{\tto}{\twoheadrightarrow}
\newcommand{\cO}{\mathcal{O}}

\newcommand{\mN}{\mathbb{N}}
\newcommand{\mZ}{\mathbb{Z}}
\newcommand{\mA}{\mathbb{A}}

\newcommand{\mC}{\mathbb{C}}

\newcommand{\End}{\mathrm{End}}

\newcommand{\Ext}{\mathrm{Ext}}
\newcommand{\colim}{\mathrm{colim}}
\newcommand{\Hom}{\mathrm{Hom}}

\newcommand{\Sym}{\mathrm{Sym}}
\newcommand{\id}{\mathrm{id}}

\newcommand{\St}{\mathrm{St}}

\newcommand{\op}{\mathrm{op}}
\newcommand{\Ind}{\mathrm{Ind}}
\newcommand{\Coker}{\mathrm{Coker}}
\newcommand{\coker}{\mathrm{coker}}

\newcommand{\Vecc}{\mathsf{Vec}}

\newcommand{\Tilt}{\mathsf{Tilt}}
\newcommand{\sJ}{\mathsf{J}}

\newcommand{\sMod}{\mathrm{sMod}}
\newcommand{\Tr}{\mathrm{Tr}}
\newcommand{\Aut}{\mathrm{Aut}}
\newcommand{\ad}{\mathrm{ad}}

\begin{document}
\title[Envelopes with quotient property]{Monoidal abelian envelopes with a quotient property}
\author{Kevin Coulembier, Pavel Etingof, Victor Ostrik and Bregje Pauwels}

\date{\today}
%\thanks{.}
\subjclass[2010]{18D10, 18D15, 18F20, 18F10, 20G05}

\keywords{Tensor category, abelian envelope, Deligne's tensor product, extension of scalars}

\begin{abstract}
We study abelian envelopes for pseudo-tensor categories with the property that every object in the envelope is a quotient of an object in the pseudo-tensor category.
We establish an intrinsic criterion on pseudo-tensor categories for the existence of an abelian envelope satisfying this quotient property. 
%We interpret extension of scalars and Deligne product of tensor categories as abelian envelopes and study abelian envelopes for pseudo-tensor subcategories of $\Rep G$ for an affine group scheme. $G
This allows us to interpret the extension of scalars and Deligne tensor product of tensor categories as abelian envelopes, and to enlarge the class of tensor categories for which all extensions of scalars and tensor products are known to remain tensor categories. 
For an affine group scheme $G$, we show that pseudo-tensor subcategories of $\Rep G$ have abelian envelopes with the quotient property, and we study many other such examples. 
This leads us to conjecture that all abelian envelopes satisfy the quotient property.
\end{abstract}

\maketitle

%\tableofcontents

\section*{Introduction}

A tensor category over a field $k$ is a $k$-linear abelian rigid monoidal  category in which the endomorphisms of the tensor identity $\unit$ constitute $k$. Tensor categories, especially tensor categories equipped with a symmetric braiding, are modelled on the representation categories $\Rep G$ of affine group schemes $G$. If we relax the condition of being abelian to being pseudo-abelian (additive and idempotent complete) we arrive at what we will call `pseudo-tensor categories'.

A fully faithful embedding of a pseudo-tensor category into a tensor category with an appropriate universal property is known as an abelian envelope of the pseudo-tensor category. Not all pseudo-tensor categories admit an abelian envelope. The question of which pseudo-tensor categories do admit an abelian envelope has attracted a lot of attention recently, see \cite{BE, BEO, CO, AbEnv1, PreTop, CEH, Deligne, EHS} and led to various interesting applications. An empirical observation about the examples appearing in the above papers (as follows for instance from the unifying construction in \cite{AbEnv1}) is that in every case, every object in the tensor category is a quotient of an object in the pseudo-tensor category.  On the other hand, in \cite{PreTop} it was observed that when a pseudo-tensor category $\bD$ can be embedded in a tensor category $\bT$ such that every object in $\bT$ is a quotient of an object in $\bD$, it follows that $\bT$ must be the abelian envelope of $\bD$.

In the current paper we further investigate abelian envelopes, in particular in relation to the above quotient property. Our first main result establishes an intrinsic criterion on a pseudo-tensor category which determines whether there exists an abelian envelope with the quotient property. For brevity, we only state it in the symmetric case here, see Theorem~\ref{ThmThm} for a full statement.

\begin{thmA}
The following are equivalent for a symmetric pseudo-tensor category $\bD$.
\begin{enumerate}
\item $\bD$ admits an abelian envelope with the quotient property.
\item 
\begin{enumerate}
\item For every nonzero morphism $U\to\unit$ in $\bD$, the diagram $U\otimes U\rightrightarrows U\to \unit$ is a coequaliser.
\item For every morphism $f:X\to Y$ in $\bD$ there exists a nonzero $M\in\Ind\bD$ such that the morphism $M\otimes f$ is split in $\Ind\bD$.
\end{enumerate}
\end{enumerate}
\end{thmA}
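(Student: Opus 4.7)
The plan is to treat the two implications separately, with the substantial work concentrated in $(2)\Rightarrow(1)$, where an abelian envelope must be constructed intrinsically from $\bD$.

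For $(1)\Rightarrow(2)$, fix an abelian envelope $\bD\hookrightarrow\bT$ with the quotient property. I would first establish that $\unit_\bT$ is simple (a standard fact about tensor categories, following from rigidity together with $\End_\bT(\unit)=k$). Consequently any nonzero $\phi:U\to\unit$ in $\bD$ is an epimorphism in $\bT$, and since $\phi\circ d_0=\phi\otimes\phi=\phi\circ d_1$, the morphism $\phi$ factors through the coequaliser $Q$ of $d_0,d_1:U\otimes U\rightrightarrows U$. A direct diagrammatic check, exploiting that $\phi$ is epi and the monoidal unit axioms, gives $Q\cong\unit$ in $\bT$; full faithfulness then transfers (a) to $\bD$. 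For (b), given $f:X\to Y$ in $\bD$, the quotient property furnishes surjections onto $\ker(f), \im(f), \coker(f)\in\bT$ from objects of $\bD$; combining these with rigidity of $\bT$ I would assemble a nonzero $M\in\Ind\bD$ (for instance as a suitable filtered colimit of duals of such surjecting objects) that makes the short exact sequences $0\to\ker(f)\to X\to\im(f)\to 0$ and $0\to\im(f)\to Y\to\coker(f)\to 0$ split after tensoring with $M$, so that $M\otimes f$ is split in $\Ind\bD$.

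For $(2)\Rightarrow(1)$, I would build $\bT$ as a Serre quotient of $\Ind\bD$, in the spirit of the constructions in \cite{AbEnv1,PreTop}. Condition (a) provides a Grothendieck topology $\tau$ on $\bD$ whose covers on $\unit$ are the nonzero morphisms $U\to\unit$ (with covers on general $X\in\bD$ obtained by slicing). Let $\cS\subset\Ind\bD$ be the Serre subcategory of $\tau$-locally zero ind-objects, and set $\bT:=\Ind\bD/\cS$. The required checks are: full faithfulness of $\bD\hookrightarrow\bT$, amounting to $\tau$ being subcanonical, which follows from (a); descent of $\otimes$ to a biexact tensor product, which is where (b) is used to show $\cS$ is a tensor ideal; $\End_\bT(\unit)=k$, again via (a); rigidity, by showing that objects of $\bT$ admitting a dual form a tensor ideal containing $\bD$; and the quotient property, which is built into the construction since every object of the Serre quotient is represented by an ind-object of $\bD$ and receives a surjection from some $X\in\bD$ in $\bT$.

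The principal obstacle is verifying that the tensor product descends to a biexact one on $\bT$. This requires both that $\cS$ be closed under tensoring with arbitrary ind-objects, and, more delicately, that kernels and cokernels of tensored morphisms behave well modulo $\cS$. Condition (b) is calibrated precisely for this: if $f$ has kernel and cokernel in $\cS$ (equivalently, becomes an isomorphism in $\bT$), splitting $M\otimes f$ in $\Ind\bD$ lets one transport the isomorphism property through the quotient and control $f\otimes(-)$ modulo $\cS$; without such a splitting mechanism, composing with $\otimes$ could create ind-objects outside $\cS$ from objects inside it. Once biexactness and rigidity are in hand, verifying the universal property characterising $\bT$ as the abelian envelope is essentially formal from the construction.
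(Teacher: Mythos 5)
Your overall architecture is reasonable and broadly parallels the paper's (which realises the envelope as the compact objects of the sheaf category $\Sh(\bD,\cT_{\cU})$ for exactly the topology you describe), and your direct argument for $(1)\Rightarrow(2)(a)$ can be made to work, provided you use biexactness of $\otimes$ in $\bT$ (not merely that $\phi$ is epi plus the unit axioms): one checks $\mathrm{im}(u\otimes U-U\otimes u)\supseteq\ker(u)$ by restricting to $\ker(u)\otimes U$. However, there are two genuine gaps.

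First, in $(1)\Rightarrow(2)(b)$ you give no working mechanism for producing the splitting object $M$. Having epimorphisms from objects of $\bD$ onto $\ker(f)$, $\im(f)$, $\coker(f)$ does not by itself yield an $M$ splitting the two short exact sequences, and a ``filtered colimit of duals of surjecting objects'' has no reason to do so. The idea you are missing is to take $M$ to be a nonzero \emph{injective} object $I$ of $\Ind\bT$: then $I\otimes V$ is injective for every rigid $V$ (by adjunction), so tensoring any short exact sequence in $\bT$ with $I$ gives a split sequence; and it is precisely the quotient property that guarantees $I$ is a filtered colimit of objects of $\bD$, hence lies in the essential image of $\Ind\bD$. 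Without this (or the symmetric-algebra substitute of Theorem~\ref{ThmSym} in characteristic zero), this implication does not go through.

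Second, in $(2)\Rightarrow(1)$ the two hardest verifications are exactly the ones you defer. (i) Condition (b) only provides \emph{some} nonzero $M$ with $M\otimes f$ split; to conclude that $\coker(f)\otimes-$ is exact you need $M\otimes-$ to be \emph{faithful on objects}, i.e.\ $M\otimes N\neq 0$ for all $N\neq 0$. This upgrade is not automatic: one must first arrange a nonzero map $\unit\to M$ (replace $M$ by ${}^\ast X\otimes M$) and then prove that $N\otimes a$ is a monomorphism for every nonzero $a:\unit\to M$ with $M$ a filtered colimit from $\bD$ --- an argument resting on condition (a) (every nonzero $U\to\unit$ is an epimorphism after localisation). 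You do not address this step at all. (ii) Your plan for rigidity --- ``objects admitting a dual form a tensor ideal containing $\bD$'' --- is not correct as stated: rigid objects are closed under $\otimes$, duals and summands, but the issue is closure under cokernels (and kernels), which is exactly where the splitting condition enters: if $M\otimes\coker(F(f))$ is a summand of a rigid object and $M\otimes-$ is exact and faithful, then $\coker(F(f))\otimes-$ is exact, and one then identifies $[\coker(f),N]_r\cong N\otimes\ker(f^\ast)$ to produce the dual. Your framing of condition (b) as controlling a Serre ideal $\cS$ conflates ``the tensor product descends'' with ``the descended tensor product is biexact on compacts''; it is the latter that (b) is for, and the argument is substantive rather than formal.
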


Our other main results establish new ways to interpret known (constructions with) tensor categories as abelian envelopes.
 The results invariably produce abelian envelopes with the quotient property. Together with the examples referenced above, this leads us to conjecture that all abelian envelopes have the quotient property. Combined with the aforementioned result from \cite{PreTop}, we thus conjecture that an embedding $\bD\subset\bT$ is an abelian envelope if and only if every object in $\bT$ is a quotient of one in $\bD$. That also means that, conjecturally, Theorem~A provides an intrinsic characterisation of which pseudo-tensor categories admit abelian envelopes. We summarise the results (without pursuing complete accuracy) in the following theorem (see~Theorem~\ref{ConjTann},~Proposition~\ref{PropDelTP} and~Proposition~\ref{PropExtSca} respectively).

\begin{thmA}\label{ThmB}\begin{enumerate}
\item If a pseudo-tensor category $\bD$ can be embedded into $\Rep G$ for an affine group scheme $G$, then $\bD$ admits an abelian envelope with the quotient property of the form $\Rep H$ for an affine group scheme $H$.
\item If for two tensor categories $\bT_1,\bT_2$ their Deligne product $\bT_1\boxtimes\bT_2$ is a tensor category (as is known to be the case when $k$ is perfect by~\cite{Del90, EO}), then the latter is the abelian envelope of the ordinary tensor product $\bT_1\otimes_k\bT_2$ and has the quotient property.
\item  If for a  tensor category $\bT$ over $k$ and a field extension $K/k$, the extension of scalars $\bT_K$ is a tensor category over $K$ (as is the case when $k$ is perfect), then the latter is the abelian envelope of the naive extension of scalars $K\otimes_k\bT$ and has the quotient property.
\end{enumerate}
\end{thmA}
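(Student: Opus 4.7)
My plan is to address the three parts of Theorem~\ref{ThmB} along two parallel tracks: parts (2) and (3) will follow from the recognition principle of \cite{PreTop} recalled in the introduction after verifying the quotient property by hand, while part (1) will rely on Theorem~A together with a Tannakian reconstruction.

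For part (2) I would show directly that every object of $\bT_1\boxtimes\bT_2$ is a quotient of an object of the form $X\boxtimes Y$ with $X\in\bT_1$ and $Y\in\bT_2$. By construction of the Deligne tensor product, every object is a finite colimit of objects $X_i\boxtimes Y_i$, and hence admits an epimorphism from some direct sum $\bigoplus_i X_i\boxtimes Y_i$; this direct sum is in turn a summand of the single object $(\bigoplus_i X_i)\boxtimes(\bigoplus_j Y_j)$, yielding the required surjection from a single $X\boxtimes Y$. If only subquotients are guaranteed at some intermediate step, I would invoke rigidity to trade a subobject $Z\subset X\boxtimes Y$ for the quotient $X^\vee\boxtimes Y^\vee\twoheadrightarrow Z^\vee$ via dualisation. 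Part (3) follows by an entirely parallel argument, using that $\bT$ tensor-generates $\bT_K$ and that scalar extension commutes with finite colimits and duals.

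For part (1), after replacing $G$ by the Tannakian closure of $\bD$ so that $\bD$ tensor-generates $\Rep G$, I apply Theorem~A. Condition (2a) is verified inside $\Rep G$: simplicity of $\unit$ forces any nonzero morphism $f:U\to\unit$ in $\bD$ to be surjective in $\Rep G$, and on underlying vector spaces a $k$-linear section $s$ of $f$ exhibits any $k\in\ker f$ as $(\id\otimes f - f\otimes\id)(k\otimes s(1))$, so the image of $\id\otimes f - f\otimes\id$ agrees with $\ker f$ as a $G$-subrepresentation; fullness of $\bD\subset\Rep G$ then transports the universal property of the coequaliser back to $\bD$. For condition (2b) the natural splitting object is the regular representation $\cO(G)$, since the canonical isomorphism $\cO(G)\otimes V\cong\cO(G)^{\oplus\dim V}$ reduces splitting of $\cO(G)\otimes f$ to splitting of the underlying $k$-linear map, which is automatic. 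Theorem~A then delivers an abelian envelope $\bT$ with the quotient property; the embedding $\bD\subset\Rep G$ extends uniquely to an exact tensor functor $\bT\to\Rep G$, and Tannakian duality applied to the fibre functor inherited from $\Rep G$ identifies $\bT\cong\Rep H$ for an affine group scheme $H$ (a quotient of $G$).

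The principal obstacle I anticipate is exhibiting $\cO(G)$, or some similarly useful splitting object, as an actual ind-object of $\bD$ rather than merely of $\Ind\Rep G$, since $\bD$ need not be closed under passage to subobjects in $\Rep G$. I would address this by realising $\cO(G)$ as a cofinal colimit of objects $V^\vee\otimes V$ with $V$ ranging over $\bD$ (using the matrix-coefficient description of $\cO(G)$ combined with the fact that $\bD$ tensor-generates $\Rep G$), thereby placing $\cO(G)\in\Ind\bD$ and closing the verification of condition (2b).
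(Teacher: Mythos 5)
Your treatment of parts (2) and (3) is essentially sound and runs on the same engine as the paper's: verify the quotient property (G), get (F) from fullness, and invoke the recognition principle. The one step you assert rather than prove is that every object of $\bT_1\boxtimes\bT_2$ has a presentation by objects $X\boxtimes Y$; this is not visible from Deligne's universal-property definition and is exactly where the content lies --- the paper extracts it from the identification of $\Ind(\bT_1\boxtimes\bT_2)$ with the Kelly tensor product of $\Ind\bT_1$ and $\Ind\bT_2$ (via \cite{LF} and Lemma~\ref{LemShD}(2)), and likewise for $\bT_K$ from its definition as the compact objects of $(\Ind\bT)_K$ together with Lemma~\ref{LemComMon}. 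You should also say a word about why $\bT_1\stackrel{.}{\otimes}\bT_2\to\bT_1\boxtimes\bT_2$ is \emph{full} (so that you get an abelian envelope, not merely a weak one); this again comes from the same identification together with Lemma~\ref{LemShD}(3).

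Part (1) has a genuine gap, and it sits exactly where you anticipated. Condition (b) of Theorem~A demands a nonzero $M\in\Ind\bD$ splitting $M\otimes f$ \emph{in} $\Ind\bD$, and $\cO(G)$ need not lie in $\Ind\bD$. Your proposed repair --- realising $\cO(G)$ as a colimit of the objects $V^\vee\otimes V$ with $V\in\bD$ --- does not work: the natural transition maps $V^\vee\otimes V\to U^\vee\otimes U$ (for $V$ a summand of $U$) are split monomorphisms, so the kernels of the matrix-coefficient maps $V^\vee\otimes V\to\cO(G)$, which are nonzero in general (already for $V=W\oplus W$), never die along the diagram, and the colimit is strictly larger than $\cO(G)$. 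The problem is not merely technical: take $\bD$ to be the image of $\Rep G'$ in $\Rep P$ for $P$ an epimorphic (e.g.\ parabolic) subgroup of a reductive $G'$, as in Example~\ref{ExGP}. Then $\bD$ is generating in $\Rep P$ (so "replacing $G$ by the Tannakian closure" does not rule this out), yet $\cO(P)\notin\Ind\bD\simeq\Ind\Rep G'$, and the correct splitting object is $\cO(G')$ --- i.e.\ the regular representation of the group $H$ of the \emph{envelope}, which is precisely what one does not yet know. Identifying that group is the real work, and the paper does it by an entirely different route, bypassing Theorem~A: it shows via the Bia{\l}ynicki-Birula--Hochschild--Mostow theory of observable subgroups that the observable hull $G'$ of $G$ in $GL(V)$ gives $\Rep G'$ as the envelope of the subcategory generated by a faithful $V$, and then assembles the general case as the subcategory $\Coker\bD\subset\Rep G$ of objects with a presentation by objects of $\bD$ (Theorem~\ref{ConjTann}). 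Finally, your parenthetical that $H$ is a quotient of $G$ is backwards: $H$ receives a homomorphism from $G$ (it contains the image of $G$ as an epimorphic subgroup), as the parabolic example already shows.
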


We also investigate whether the Deligne product and extension of scalars are always tensor categories, since we know of no counterexamples. In particular, we derive what additional conditions on the envelopes in Theorem~\ref{ThmB} (2) and (3) we need to impose in order for the Deligne product and extension of scalars to be tensor categories. We moreover enlarge the class of tensor categories for which all extensions of scalars are known to be tensor categories. Concretely, we show that any tensor category which admits a tensor functor to a semisimple tensor category has this property, and we generalise the result from \cite{Del14} from tannakian categories to super-tannakian categories. Finally we demonstrate that if the extension of scalars is always a tensor category, then so is Deligne's product of tensor categories.

The paper is organised as follows. In Section~\ref{SecPrel} we recall some standard notions about monoidal categories. In Section~\ref{SecEnv} we discuss the basics of abelian envelopes and provide a self-contained proof for the fact that every tensor category is its own abelian envelope. In Section~\ref{SecCrit} we prove Theorem~A and its generalisation to pseudo-tensor categories without braiding. In Sections~\ref{SecTann}, \ref{SecExt} and \ref{SecDel} we prove the three results in Theorem~B. In Section~\ref{SecEx} we provide further examples and non-examples of abelian envelopes. 

\section{Preliminaries}\label{SecPrel}
We set $\mN=\{0,1,2,\ldots\}$ and let $k$ be a field unless further specified.
\subsection{Tensor categories and pseudo-tensor categories}

%\subsubsection{}  For two such monoidal categories $\bB,\bC$ we denote by $\LMon(\bB,\bC)$ the category of $k$-linear monoidal functors. When the categories are symmetric monoidal, we use $\sLMon$ to denote the subcategory of symmetric monoidal functors. We will also use superscripts such as $cc$, $ex$, $faith$ to denote the subcategories of cocontinuous, exact or faithful monoidal functors.

By a $k$-linear monoidal category we mean a monoidal category which is $k$-linear and has $k$-linear tensor product.

\subsubsection{}\label{DefPseudo} A $k$-linear monoidal category $(\bD,\otimes,\unit)$ is a {\bf pseudo-tensor category over $k$} if
\begin{enumerate}[label=(\roman*)]
\item $\bD$ is essentially small;
\item $k\to\End(\unit)$ is an isomorphism;
\item every object $X$ in $\bD$ is rigid (has a left dual $X^\ast$ and right dual ${}^\ast X$);
\item $\bD$ is pseudo-abelian (additive and idempotent complete).
\end{enumerate}

Denote by $\PTens$ the 2-category of pseudo-tensor categories over $k$, where 1-morphisms are $k$-linear monoidal functors and 2-morphisms are monoidal natural transformations. The corresponding 2-category of symmetric pseudo-tensor categories is denoted by $\sPTens$. For a monoidal category satisfying (i)-(iii), we can take its pseudo-abelian envelope (the idempotent completion of the additive envelope) to obtain a pseudo-tensor category.

A \textbf{pseudo-tensor subcategory} of $\bD$ is a \emph{full} monoidal subcategory closed under taking duals, direct sums and summands. It is thus again a pseudo-tensor category. %The quotient of a pseudo-tensor category with respect to a non-trivial tensor ideal is again pseudo-tensor.

\begin{example}\label{ExUni}
Let $\bU_0$ denote the $k$-linear rigid monoidal category freely generated by one object. For instance, we can take $\bU_0$ to be the strict monoidal category where the objects form the free monoid on $\{X_i\,|\,i\in\mZ\}$. Then $\bU_0$ can be made into a rigid monoidal category with $X_i^\ast=X_{i+1}$ for all $i\in\mZ$ and where every morphism is a $k$-linear combination of tensor products of (co)evaluations of (products of) the $X_i$ and identity morphisms. We refer to \cite[\S 5]{CSV} for geometric and combinatorial descriptions. Then we denote by $\bU$ the pseudo-abelian envelope of $\bU_0$, which is a pseudo-tensor category. As in \cite[Proposition~10]{CSV}, we find that for any pseudo-tensor category $\bD$ over $k$, evaluation at $X_0\in \bU$ yields an equivalence between $\PTens(\bU,\bD)$ and the groupoid of objects and isomorphisms in $\bD$.
\end{example}

\subsubsection{}\label{trace} Consider a pseudo-tensor category $\bD$ over $k$ and take $X\in\bD$. Recall from \cite[Definition~4.7.1]{EGNO} that for any morphism $a:X\to X^{\ast\ast}$ the left categorical trace $\Tr^L(a)\in k\simeq\End(\unit)$ is the composite
$$\unit\xrightarrow{\co_X} X\otimes X^{\ast}\xrightarrow{a\otimes X^\ast} X^{\ast\ast}\otimes X^\ast\xrightarrow{\ev_{X^\ast}}\unit.$$

\subsubsection{}When the underlying additive category of a pseudo-abelian category is actually abelian, we say it is a {\bf tensor category} over $k$. This notion is more general than in \cite{Del90}, where tensor categories are assumed to be symmetric, and in \cite{EGNO}, where tensor categories are assumed to have all objects of finite length (be artinian). Following \cite{EGNO} we call \emph{exact} $k$-linear (symmetric) monoidal functors between (symmetric) tensor categories \textbf{(symmetric) tensor functors}. We denote  by $\Tens$  the 2-category  of tensor categories over~$k$, tensor functors and monoidal natural transformations. Similarly we have the 2-category $\sTens$ of symmetric tensor categories. 
We denote the symmetric tensor category of finite dimensional vector spaces over $k$ by $\Vecc_k$, and the category of all vector spaces by $\Vecc_k^\infty$.

\subsubsection{} A $k$-linear abelian category is called {\bf artinian (over $k$)} if its objects are of finite length and its morphism spaces finite dimensional over $k$. A tensor category over $k$ is artinian if and only if its objects are of finite length.

\subsubsection{}  We will freely use that a tensor category $\bT$ has the following properties:
	\begin{enumerate}
		\item The unit $\unit$ in a tensor category is simple. This is proved in \cite[Proposition~1.17]{DM} for symmetric tensor categories, but one can rewrite the proof so that it does not use the braiding. A more succinct proof (and without assumption of braiding) for artinian tensor categories can be found in \cite[Theorem~4.3.8]{EGNO}.

		% are exact and faithful for all nonzero objects $X$ in $\bT$.
		\item The assignments $X\mapsto X^\ast$ and $X\mapsto {}^\ast X$ yield (exact) anti-autoequivalences of~$\bT$.
	\end{enumerate}

\subsubsection{} A morphism in an additive category is {\bf split} if it is a direct sum of an isomorphism and a zero morphism. In particular, a morphism $f:X\to Y$ in an idempotent complete additive category is split if and only if there exists $g:Y\to X$ with $f=fgf$. For example, $f:X\to X$ is split if and only if it is von Neumann regular in $\End(X)$.

\subsection{Biclosed Grothendieck categories}\label{Biclo}

\subsubsection{Biclosed categories}A monoidal category $\bC$ is biclosed if for every $X\in\bC$ the endo\-functors $X\otimes-$ and $-\otimes X$ of $\bC$ have right adjoints $[X,-]_l$ and $[X,-]_r$.
Clearly, any pseudo-tensor category is biclosed, with
$$[X,-]_l={}^\ast X\otimes-\quad\mbox{and}\quad [X,-]_r=-\otimes X^\ast.$$
%We will henceforth abbreviate `biclosed monoidal' to `biclosed'.

\subsubsection{}
Now assume that $\bC$ is a monoidal category such that the underlying category is a Grothendieck category. By Freyd’s special adjoint functor theorem, $\bC$ is then biclosed if and only if the tensor product is cocontinuous in each variable. Such categories will be called `biclosed Grothendieck categories'. %In particular, the tensor product is additive in each variable, and hence also $K$-linear with $K=\End(\unit)$. 
Throughout, we will freely use that every Grothendieck category has enough injective objects.

\begin{lemma}\label{LemNN}
If an object $N$ in a biclosed Grothendieck category $\bC$ is a filtered colimit of rigid objects, then $N\otimes-$ and $-\otimes N$ are exact.
\end{lemma}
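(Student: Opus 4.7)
The plan is to reduce exactness of $N\otimes-$ (and symmetrically $-\otimes N$) to two standard facts: rigid objects tensor-exactly, and filtered colimits are exact in a Grothendieck category.

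First, I would record that since $\bC$ is biclosed, both $X\otimes-$ and $-\otimes X$ preserve all colimits for every $X\in\bC$ (being left adjoints). In particular they are right exact. To upgrade to exactness when $X$ is rigid, I would observe that rigidity supplies adjunctions $X^\ast\otimes-\dashv X\otimes-\dashv {}^\ast X\otimes-$ (and symmetrically on the other side), so $X\otimes-$ is simultaneously a left and a right adjoint, hence exact. Thus for every rigid object $X\in\bC$, both $X\otimes-$ and $-\otimes X$ are exact functors $\bC\to\bC$.

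Next, write $N=\colim_i X_i$ for a filtered diagram $(X_i)$ of rigid objects. Cocontinuity of the tensor product in each variable yields, for every $Y\in\bC$, natural isomorphisms
\[
N\otimes Y\;\cong\;\colim_i (X_i\otimes Y),\qquad Y\otimes N\;\cong\;\colim_i (Y\otimes X_i),
\]
exhibiting $N\otimes-$ and $-\otimes N$ as filtered colimits (taken in the functor category $\Fun(\bC,\bC)$) of the exact functors $X_i\otimes-$ and $-\otimes X_i$ respectively. Since $\bC$ is a Grothendieck category it satisfies (AB5), i.e.\ filtered colimits of exact sequences are exact; computed pointwise, this means a filtered colimit of exact functors into $\bC$ is again exact. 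Therefore $N\otimes-$ and $-\otimes N$ are exact, as required.

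The only subtle point is making sure we really have exactness of filtered colimits of functors and not merely right exactness; this is where the Grothendieck (AB5) hypothesis is essential, together with the fact that kernels, images and cokernels in $\Fun(\bC,\bC)$ are computed pointwise. Beyond that the argument is essentially a bookkeeping exercise combining rigidity with cocontinuity of $\otimes$.
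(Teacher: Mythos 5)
Your argument is correct and is essentially the paper's own proof, merely spelled out in more detail: the paper likewise appeals to cocontinuity of the tensor product together with exactness of filtered colimits (AB5) in a Grothendieck category. The extra care you take over why rigid objects tensor exactly and why colimits of exact functors are computed pointwise is sound and fills in exactly the steps the paper leaves implicit.
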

\begin{proof}
This follows because the tensor product is cocontinuous and since filtered colimits of short exact sequences are exact.
\end{proof}

\subsubsection{}
For a tensor category $\bT$, the category $\Ind\bT$ is a biclosed Grothendieck category, see for instance~\cite[\S 7]{Del90} or \cite[Theorem~3.5.4]{PreTop}. This biclosed structure is essentially uniquely determined by demanding that $\bT\to\Ind\bT$ be monoidal. 

\begin{lemma}\label{LemIndFaith} Let $\bT$ be a tensor category over $k$.
\begin{enumerate}
\item For objects $M_1,M_2\in \bT$ and $M_3,M_4\in\Ind\bT$, the canonical morphism
$$\Hom(M_1,M_3)\otimes_k\Hom(M_2,M_4)\;\to\; \Hom(M_1\otimes M_2,M_3\otimes M_4)$$
is injective.
\item For $M\in \Ind\bT$, the functors $M\otimes-$ and $-\otimes M$ are exact and, when $M\not=0$, faithful.
\end{enumerate}
\end{lemma}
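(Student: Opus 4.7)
The common thread for both parts is the simplicity of $\unit$ in $\bT$ (noted above), which extends to $\Ind\bT$: any subobject $U\subset\unit$ in $\Ind\bT$ is the filtered union of images of morphisms $V_\alpha\to\unit$ from $V_\alpha\in\bT$; each such image is $0$ or $\unit$ by simplicity of $\unit$ in $\bT$ together with exactness of $\bT\hookrightarrow\Ind\bT$, so $U$ is $0$ or $\unit$.

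For (1), the plan is to peel off tensor factors using rigidity and then pass to filtered colimits. The adjunctions $-\otimes M_2\dashv -\otimes {}^*M_2$ and $M_1\otimes -\dashv M_1^*\otimes -$ arising from rigidity of $M_1,M_2$ rewrite the map in question as
\[ \Hom(\unit, M_1^*\otimes M_3)\otimes_k\Hom(\unit, M_4\otimes{}^*M_2) \to \Hom(\unit, M_1^*\otimes M_3\otimes M_4\otimes{}^*M_2),\quad (u,v)\mapsto u\otimes v. \]
Writing $M_3$ and $M_4\otimes{}^*M_2$ as filtered colimits of objects of $\bT$, and using that $\unit$ is compact in $\Ind\bT$ and that filtered colimits of vector space injections are injective, we reduce to showing that
\[ \Hom(\unit,A)\otimes_k\Hom(\unit,B)\to\Hom(\unit,A\otimes B),\quad (u,v)\mapsto u\otimes v \]
is injective for $A,B\in\bT$. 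The adjunction $\Hom(\unit,A\otimes B)\cong\Hom({}^*A,B)$ transports this to injectivity of the composition pairing $\Hom({}^*A,\unit)\otimes_k\Hom(\unit,B)\to\Hom({}^*A,B)$, $(\phi,v)\mapsto v\circ\phi$.

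The main obstacle is proving this composition pairing is injective, and this is where we use simplicity of $\unit$. Given linearly independent $\phi_1,\ldots,\phi_n\in\Hom(C,\unit)$ (where $C={}^*A$) and $v_i\in\Hom(\unit,B)$ with $\sum v_i\circ\phi_i=0$, bundle them as $\Phi=(\phi_1,\ldots,\phi_n)\colon C\to\unit^n$ and $V\colon\unit^n\to B$ with $V\circ\iota_i=v_i$, so that $V\circ\Phi=0$. I claim $\Phi$ is an epimorphism. Since $\unit$ is simple, $\unit^n$ has length $n$ and every subobject is of the form $\unit^m$ with $m\le n$; in particular $C/\ker\Phi\cong\unit^m$ embeds in $\unit^n$. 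The linear independence of the $\phi_i$ yields $n$ linearly independent elements of $\Hom(\unit^m,\unit)=k^m$, forcing $m\ge n$, hence $m=n$ and $\Phi$ is epi. Consequently $V=0$ and all $v_i=0$.

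For (2), exactness of $M\otimes-$ and $-\otimes M$ is immediate from Lemma~\ref{LemNN} since $M=\colim_i M_i$ with $M_i\in\bT$ rigid. For faithfulness when $M\neq 0$, let $f\colon X\to Y$ satisfy $M\otimes f=0$. Writing $X=\colim X_\alpha$ with $X_\alpha\in\bT$ and using that $M\otimes-$ preserves colimits lets us assume $X\in\bT$. Rigidity of $X$, as in (1), further reduces the claim to: for $g\colon\unit\to Z$ in $\Ind\bT$, does $M\otimes g=0$ imply $g=0$? If $g\neq 0$, its image is a nonzero quotient of $\unit$ in $\Ind\bT$, hence by the extended simplicity of $\unit$ is $\unit$ itself, so $g$ is a monomorphism. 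By exactness, $M\otimes g$ is then a monomorphism with nonzero source $M$, contradicting $M\otimes g=0$.
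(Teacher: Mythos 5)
Your proof is correct, but both halves take a genuinely different route from the paper's. For (1), after the same initial adjunction reduction to $M_1=\unit=M_2$, the paper identifies $\Hom(\unit,M_3)$ with the maximal ``trivial'' subobject of $M_3$ in $\Vecc^\infty\subset\Ind\bT$ and concludes via the isomorphism $\Hom(\unit,M_3)\otimes_k\Hom(\unit,M_4)\xrightarrow{\sim}\Hom(\unit,\Hom(\unit,M_3)\otimes M_4)$ followed by left exactness of $\Hom(\unit,-\otimes M_4)$; you instead reduce by compactness of $\unit$ to $A,B\in\bT$ and prove injectivity of the composition pairing $\Hom(C,\unit)\otimes_k\Hom(\unit,B)\to\Hom(C,B)$ by a length/linear-independence argument on $\unit^n$ using only simplicity of $\unit$ and $\End(\unit)=k$. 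This is more elementary and self-contained (it avoids any claim about $\Vecc^\infty$ sitting nicely inside $\Ind\bT$), at the cost of being longer. For faithfulness in (2), the paper deduces $M\otimes N\neq0$ for nonzero $M,N$ directly from part (1) applied to nonzero maps $X\to M$, $Y\to N$; you instead reduce via duality to morphisms $\unit\to Z$ and use that $\unit$ stays simple in $\Ind\bT$ together with exactness --- also valid, and it makes (2) independent of (1). Two cosmetic remarks: your left/right dual placements ($M_1\otimes-\dashv M_1^*\otimes-$, $-\otimes M_2\dashv-\otimes{}^*M_2$) are opposite to the paper's convention in \S\ref{Biclo} ($[X,-]_l={}^\ast X\otimes-$, $[X,-]_r=-\otimes X^\ast$), which is harmless but worth aligning; and the ``extended simplicity'' of $\unit$ in $\Ind\bT$ uses that $\bT$ is closed under subquotients in $\Ind\bT$ (thickness), which is standard but slightly more than the bare exactness of $\bT\hookrightarrow\Ind\bT$ you cite.
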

\begin{proof}
Exactness in part (2) is a special case of Lemma~\ref{LemNN}.

By adjunction, for part (1) we may assume $M_1=\unit=M_2$. Since $\Hom(\unit,M)$ can be interpreted as the maximal subobject of $M\in\Ind\bT$ in the Serre subcategory $\Vecc^\infty\subset\Ind\bT$, we can rewrite the morphism as
$$\Hom(\unit,M_3)\otimes_k\Hom(\unit,M_4)\xrightarrow{\sim}\Hom(\unit,\Hom(\unit, M_3)\otimes M_4)\hookrightarrow\Hom(\unit, M_3\otimes M_4),$$
by left exactness of $\Hom(\unit,-\otimes M_4)$.

Faithfulness in part (2) now also follows. Indeed, by exactness we only need to show that for non-zero $M,N\in\Ind\bT$, we have $M\otimes N\not=0$. Consider non-zero morphisms $X\to M$ and $Y\to N$ for $X,Y\in\bT$. Applying part (1) then yields a non-zero morphism $X\otimes Y\to M\otimes N$.
\end{proof}

%Every rigid object in $\Ind\bT$ is (isomorphic to an object) in $\bT$, see for instance \cite[Lemma~1.3.7]{AbEnv1}. Consequently, for any pseudo-tensor category $\bD$, the functor $\bT\to\Ind\bT$ induces an equivalence
%$$\LMon(\bD,\bT)\;\stackrel{\sim}{\to}\;\LMon(\bD,\Ind\bT).$$

%%%%%%%%%%%%%%%%%%%%%%%%%%%%%%%%%%%%%%%%%%%%%%%%%%%%%%%%%%%%%%%%%%%%%%%%%%%%%%%%%%%%%%%%%%%%%%%%%%%%%%%%%%%%%%%%%%%%

\section{Generalities on abelian envelopes}\label{SecEnv}

\subsection{Abelian envelopes}

The following definition comes from \cite{EHS, CEH}, although here we do not assume braidings. We will make the connection with the symmetric case of  \cite{EHS, CEH} in Lemma~\ref{LemSym} below. Denote by $\PTens^{faith}$ the 2-subcategory of $\PTens$ with same objects but only faithful functors.

\begin{definition}\label{DefAbEnv}
For a pseudo-tensor category $\bD$ over $k$, a pair $(F,\bT)$ of a tensor category $\bT$ over $k$ and a fully faithful linear monoidal functor $F:\bD\to\bT$ constitute an {\bf abelian envelope} of $\bD$ if for each tensor category $\bT_1/k$, composition with $F$ induces an equivalence 
$$\Tens(\bT,\bT_1)\;\simeq\; \PTens^{faith}(\bD,\bT_1).$$
\end{definition}
This definition is justified by the many relevant examples referenced in the introduction. However, from a theoretical viewpoint it is logical to generalise the definition as in the following paragraph.

\subsubsection{} Since tensor functors are faithful, see \cite[2.10]{Del90}, the canonical forgetful 2-functor which interprets a tensor category as a pseudo-tensor category yields a 2-functor
$$\Phi\,:\, \Tens\to\PTens^{faith},$$
which we usually omit from notation. This functor does not admit a left adjoint, see Remark~\ref{RemPhi} below, but we can introduce a partially defined adjoint as follows. 

Define $\PTens^{faith}_0$ as the 2-full 2-subcategory of $\PTens^{faith}$ which comprises those pseudo-tensor categories $\bD$ for which there exists a tensor category $\bT$ and a faithful linear monoidal functor $F:\bD\to\bT$ such that for each tensor category $\bT_1/k$, composition with $F$ induces an equivalence 
$$\Tens(\bT,\bT_1)\;\simeq\; \PTens^{faith}(\bD,\bT_1).$$
Such a pair $(F,\bT)$ will be called a {\bf weak abelian envelope} of $\bD$, since it is just the relaxation of Definition~\ref{DefAbEnv} where we do not demand that $F$ be full.

%We can embed this question into the following canonical categorical question.
%\begin{question}\label{QueAdj}
%Consider the 2-category $\sPTens$ of tensor categories over $k$, tensor functors and monoidal natural transformations. Since tensor functors are faithful (see \cite[2.10]{Del90}), we have a forgetful 2-functor $\Phi$ to the 2-category $\PTens^{faith}$ of pseudo-tensor categories over $k$, faithful linear monoidal functors and monoidal natural transformations.
%\begin{enumerate}
%\item What is the maximal 2-full 2-subcategory $\PTens^{faith}^0$ of $\PTens^{faith}$ on which we can define a partial left adjoint pseudo-functor  $\Psi$ to $\Phi$?
%\item For which $\bD$ in $\PTens^{faith}^0$ is the faithful linear monoidal functor $\bD\to \Phi\Psi(\bD)$ (coming from the unit of the adjunction $\Psi\dashv \Phi$) full?
%\end{enumerate}
%\end{question}

\begin{remark}\label{RemPhi}
\begin{enumerate}
\item By \cite[Mise en garde~5.8]{Deligne} or \cite[Proposition~4.5.2(i)]{PreTop}, or Example~\ref{Klein} below, we know that $\PTens^{faith}_0\not=\PTens^{faith}$.

\item $\Tens$ is actually a 2-full 2-subcategory of $\PTens^{faith}$, in other words $\Phi$ is 2-fully faithful, see Theorem~\ref{Thm1}. This implies also that $\Phi$ actually takes values in $\PTens^{faith}_0$ and that $\PTens^{faith}_0$ is the unique maximal 2-full 2-subcategory on which $\Phi$ admits a left adjoint $\Psi$. The defining functor $F:\bD\to\bT$ above is then precisely evaluation of the unit $\bD\to \Phi\Psi(\bD)$  of the adjunction $\Psi\dashv \Phi$.
\item We will show that $\bD\to \Phi\Psi(\bD)$ is not always full for $\bD\in\PTens^{faith}_0$, see Subsection~\ref{ExBEO}, so there exist weak abelian envelopes which are not abelian envelopes.
\end{enumerate}
\end{remark}

The following recognition result for weak abelian envelopes was obtained in \cite{PreTop}.
\begin{prop}[Corollary~4.4.4 in \cite{PreTop}]\label{PropGF}
Consider a faithful linear monoidal functor $F:\bD\to\bT$ to a tensor category $\bT$ over $k$ such that
\begin{enumerate} 
\item[(G)] Every object in $\bT$ is a quotient of some $F(X)$, with $X\in\bD$.
\item[(F)] For every morphism $a:F(X)\to F(Y)$ in $\bT$ (with $X,Y\in\bD$) there exists a morphism $q:X'\to X$ in $\bD$ such that $F(q)$ is an epimorphism and $a\circ F( q)$ is in the image of~$F$.
%\item[(FF)] If $f\in J(X,Y)$, there exists a morphism $q:X'\to X$ such that $u(q)$ is an epimorphism and $f\circ q=0$.
\end{enumerate}
Then $(F,\bT)$ is a weak abelian envelope of $\bD$.
\end{prop}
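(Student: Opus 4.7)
The plan is to exhibit a quasi-inverse to $(-)\circ F$: for each faithful $k$-linear monoidal functor $G:\bD\to\bT_1$ into a tensor category, construct a tensor functor $\tilde G:\bT\to\bT_1$ together with a monoidal isomorphism $\tilde G\circ F\simeq G$, whence essential surjectivity, fullness and faithfulness of $(-)\circ F$ will follow from the construction and its essential uniqueness. On objects, for $M\in\bT$ I would produce a two-step presentation in the image of $F$: by (G) pick an epi $p:F(X)\twoheadrightarrow M$; its kernel is again a quotient of some $F(Y)$ by (G), giving a right-exact sequence $F(Y)\xrightarrow{u}F(X)\xrightarrow{p}M\to 0$. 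Property (F) then furnishes an epi $F(q):F(Y')\twoheadrightarrow F(Y)$ and a morphism $f:Y'\to X$ in $\bD$ with $u\circ F(q)=F(f)$, so the sequence stays right exact with $u$ replaced by $F(f)$. Set $\tilde G(M):=\coker G(f)$.

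For a morphism $a:M\to N$ in $\bT$ with presentations as above, I would pull back $F(X)\to N\leftarrow F(X'')$ and use (G) to cover the pullback by some $F(W)$; applying (F) to the two legs of the resulting cospan yields, after precomposing with a further epi, lifts in $\bD$, from which $\tilde G(a):\tilde G(M)\to \tilde G(N)$ is obtained via the cokernel property. The crucial well-definedness step is to verify that $\tilde G(M)$ and $\tilde G(a)$ are independent of all choices up to canonical isomorphism; this is handled by forming a common refinement of any two presentations, using (G) to resolve the kernels of the resulting comparison maps and (F) to realise those comparison maps in $\bD$ after pre-composition with suitable epis, and then invoking faithfulness of $G$ to transport the resulting coherence diagrams from $\bD$ to $\bT_1$. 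Once $\tilde G$ is established as a $k$-linear functor, monoidality follows because rigidity of $\bT$ makes $X\otimes -$ exact in each variable, so the termwise tensor product of two presentations presents $M\otimes M'$ and the monoidal constraints of $G$ descend through the cokernels; exactness of $\tilde G$ reduces, via the duality anti-equivalence $(-)^\ast$, to the right exactness that is built into the definition. The isomorphism $\tilde G\circ F\simeq G$ is immediate from the trivial presentation $0\to F(X)=F(X)\to 0$.

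For the 2-cells, given a monoidal natural transformation $\eta:H_1F\to H_2F$ between restrictions of tensor functors $H_i:\bT\to\bT_1$, I would define its extension $\tilde\eta_M:H_1(M)\to H_2(M)$ from any presentation $F(X)\twoheadrightarrow M$ using exactness of the $H_i$, which transports the presentation to $\bT_1$; compatibility with the two-step resolution and naturality reduce once more to (G) and (F). Faithfulness of $(-)\circ F$ on 2-cells is then immediate since $\tilde\eta_M$ is uniquely determined by its value on $F(X)$ via the quotient map, while fullness follows from the monoidality of $\eta$ together with the uniqueness clause. The principal obstacle throughout is the coherent independence of $\tilde G$ from the choice of two-step presentation: property (G) alone only realises objects as quotients, but without (F) the comparison map between two different presentations need not lift to $\bD$, which would block any canonical identification of the corresponding cokernels in $\bT_1$; this is the place where condition (F) does the real work in the proof.
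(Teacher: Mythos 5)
The paper does not actually reprove this statement: it is quoted verbatim from \cite{PreTop}, where the argument runs through the sheaf category $\Sh(\bD,\cT_{\cU})$ — conditions (G) and (F) identify $\Ind\bT$ with $\Sh(\bD,\cT_{\cU})$, and the universal property is inherited from that of sheafification. Your hands-on ``extension by presentations'' strategy is a legitimate alternative route, and you correctly isolate where (F) must enter. But there is a genuine gap at the step you describe as ``invoking faithfulness of $G$ to transport the resulting coherence diagrams from $\bD$ to $\bT_1$''. All of your comparisons between presentations of $M$ are only achieved after precomposing with morphisms $q$ in $\bD$ such that $F(q)$ is an epimorphism in $\bT$ (these arise from (F) and from covering pullbacks via (G)). To conclude that the induced maps on the cokernels $\coker G(f)$ are mutually inverse isomorphisms — and, even earlier, that applying $G$ to your two-step presentation still yields a right exact sequence so that $\tilde G(M)$ is the intended object — you need $G(q)$ to be an epimorphism in $\bT_1$. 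Faithfulness of $G$ only gives $G(q)\neq 0$; a faithful additive functor need not preserve epimorphisms, and since the objects $F(X)$ are not projective in $\bT$, the classical homotopy argument for independence of the presentation is unavailable.

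This transfer of epimorphy through an arbitrary faithful monoidal $G$ is precisely the hard kernel of the result. It does hold, but proving it requires the ``local'' characterisation of epimorphisms in a tensor category: an epimorphism between objects in the image of $F$ becomes, after a further cover, split upon tensoring with a nonzero morphism $u:U\to\unit$ (compare Lemma~\ref{LemSplit} and Corollary~\ref{Corfu}, and the pretopology $\cT_{\cU}$ of \cite{PreTop}); since $\unit$ is simple in $\bT_1$ and $G$ is faithful, $G(u)\neq 0$ is automatically an epimorphism, and this is what propagates epimorphy through $G$. Without this ingredient your $\tilde G$ is not known to be well defined, and the later claims (functoriality, right exactness, monoidality, and the descent of 2-cells along the same quotient maps) all inherit the same gap.
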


\subsubsection{} As is standard, we say that an object or a collection of objects in a tensor category is {\bf generating} if consecutively taking duals, direct sums, tensor products and subquotients yields all objects. When considering a pseudo-tensor subcategory of a tensor category (which is already closed under the first three of the above operations), this leads to Definition~\ref{DefGen}(1) below. 
Motivated by condition (G) in Proposition~\ref{PropGF}, we also introduce a stronger version of this property.

%The latter terminology is justified by the fact that $\bD$ will then be a strong generator in $\bT$ (and in $\Ind\bT$) in the traditional sense of \cite[Definition~1.1]{Day} (the class of functors $\Ind\bT(D,-)$, with $D\in\bD$, jointly reflect isomorphisms). 
\begin{definition}\label{DefGen}
Consider a pseudo-tensor subcategory $\bD$ of a tensor category $\bT$.

\begin{enumerate}
\item The subcategory $\bD\subset\bT$ is {\bf generating} if every object in $\bT$ is a {\em subquotient} of an object in $\bD$.
\item The subcategory $\bD\subset\bT$ is {\bf strongly generating} if every object in $\bT$ is a {\em quotient} of an object in $\bD$.

\end{enumerate}
\end{definition}

Note that in Definition~\ref{DefGen}(2), we can replace `quotient' by `subobject', due to the monoidal dualities. We will also call a collection of objects {\bf strongly generating} if consecutively taking duals, direct sums, tensor products and subobjects yields all objects, which is equivalent to saying that consecutively taking duals, direct sum(mand)s, tensor products yields a strongly generating pseudo-tensor subcategory. The two notions in Definition~\ref{DefGen} are not equivalent, see for instance Proposition~\ref{PropExGLV} below.

\begin{example}\label{ExVsg}
Let $k$ be an algebraically closed field and $V$ a finite dimensional vector space. Then $V$ is strongly generating in $\Rep GL(V)$. Indeed, it is a standard consequence of \cite[Proposition~E.6]{Jantzen} that every tilting object in $\Rep GL(V)$ is a quotient of a polynomial in $V,V^\ast$. By \cite[Remark~3.3.2]{CEH}, every representation is a quotient of a tilting module.
\end{example}

\subsection{A conjecture}

It is clear that condition (F) in~Proposition~\ref{PropGF} holds whenever the functor $F$ is full. Using Definition~\ref{DefGen}(2), we can hence restate the following special case:
\begin{theorem}\label{MainThmAbEnv}
A tensor category is the abelian envelope of each of its strongly generating pseudo-tensor subcategories.
\end{theorem}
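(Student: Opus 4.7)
The plan is to deduce this almost immediately from Proposition~\ref{PropGF}, which is the key technical recognition result that was already established in \cite{PreTop}. Since the statement of the theorem requires only that the inclusion $F: \bD\hookrightarrow\bT$ be an abelian envelope (as in Definition~\ref{DefAbEnv}), and Proposition~\ref{PropGF} gives a weak abelian envelope, the extra observation is just that here $F$ happens to be fully faithful, so the ``weak'' disappears.

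More explicitly, I would proceed as follows. First I would record that the inclusion $F:\bD\hookrightarrow\bT$ is automatically a faithful (indeed fully faithful) $k$-linear monoidal functor between pseudo-tensor categories, because by the definition of a pseudo-tensor subcategory (Subsection~\ref{DefPseudo}) the subcategory $\bD$ is full in $\bT$ and is closed under the monoidal structure and duals. Second I would verify the two hypotheses of Proposition~\ref{PropGF}: condition (G) is literally the hypothesis that $\bD$ is strongly generating in $\bT$ in the sense of Definition~\ref{DefGen}(2); condition (F) is automatic from fullness, since for any morphism $a:F(X)\to F(Y)$ in $\bT$ the map $a$ already lies in the image of $F$, so one may take $q=\id_X:X\to X$ (whose image under $F$ is clearly an epimorphism). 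Proposition~\ref{PropGF} then gives that $(F,\bT)$ is a weak abelian envelope of $\bD$.

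Finally, to upgrade from weak abelian envelope to abelian envelope, I would invoke fullness of $F$ one last time: the universal property produced by Proposition~\ref{PropGF} is precisely the one demanded in Definition~\ref{DefAbEnv}, and the only additional requirement imposed by Definition~\ref{DefAbEnv} is that $F$ be \emph{fully} faithful, which holds by construction. Hence $(F,\bT)$ is an abelian envelope of $\bD$.

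There is no genuine obstacle here: the entire content of the theorem has been absorbed into Proposition~\ref{PropGF} (which does the serious work of turning a generation-by-quotients condition into a 2-categorical universal property) and into the definition of a pseudo-tensor subcategory. If anything, the only point worth spelling out carefully is that condition (F) is trivially satisfied for the inclusion of a full subcategory, which is why strongly generating pseudo-tensor \emph{sub}categories behave so much better than arbitrary faithful embeddings.
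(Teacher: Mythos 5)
Your argument is exactly the paper's: the theorem is stated there as the special case of Proposition~\ref{PropGF} in which fullness of the inclusion makes condition (F) automatic and (G) is the definition of strongly generating, with full faithfulness upgrading the weak abelian envelope to an abelian envelope. Correct, and no difference in approach.
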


We conjecture that all abelian envelopes are of the above form.
\begin{conjecture}\label{Conj}
A pair $\bD\subset\bT$ of a pseudo-tensor subcategory in a tensor category is an abelian envelope if and only if it is strongly generating.
\end{conjecture}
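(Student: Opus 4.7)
The forward direction (strongly generating $\Rightarrow$ abelian envelope) is exactly Theorem~\ref{MainThmAbEnv}, so the plan concentrates on the converse: given that $\bD\subset\bT$ is an abelian envelope, deduce that every object of $\bT$ is a quotient of some object in $\bD$. My plan is to reduce this to an application of Theorem~A, together with uniqueness of (weak) abelian envelopes coming from the universal property in Definition~\ref{DefAbEnv}.

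First I would verify the weaker statement that $\bD$ is generating in $\bT$ (Definition~\ref{DefGen}(1)). Let $\bT'\subset \bT$ be the full subcategory of subquotients of objects in $\bD$; this is closed under direct sums, subobjects, quotients, tensor products, and (using both left and right duals together with the fact that duality swaps sub- and quotient objects) under duals, so it is a tensor subcategory of $\bT$. The inclusion $\bD\hookrightarrow\bT'$ is strongly generating once we replace subquotients by duals of quotients, so by Theorem~\ref{MainThmAbEnv} it is an abelian envelope. But $(F,\bT)$ is also an abelian envelope, so the universal property produces mutually inverse tensor functors $\bT\rightleftarrows\bT'$ compatible with $\bD$, and hence $\bT=\bT'$.

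The next and harder step is to upgrade \emph{generating} to \emph{strongly generating}. The idea I would pursue is to verify that $\bD$ satisfies the two intrinsic conditions (a) and (b) of Theorem~A, so that Theorem~A itself produces an abelian envelope $\bT_q$ of $\bD$ with the quotient property; uniqueness of abelian envelopes then forces $\bT_q\simeq \bT$. Condition (b) should follow from exactness and faithfulness of tensoring by nonzero ind-objects (Lemma~\ref{LemIndFaith}): given $f\colon X\to Y$ in $\bD$, one would like to choose $M\in\Ind\bT$ (then argued to live in $\Ind\bD$ using the already established generation) so that $M\otimes f$ becomes split, exploiting injectivity of an appropriate object in $\Ind\bT$. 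Condition (a), the coequaliser property of $U\otimes U\rightrightarrows U\to\unit$ for each nonzero $U\to\unit$, should likewise be tested after embedding into $\bT$, where simplicity of $\unit$ and rigidity of $U$ make the statement amount to a concrete diagram chase in the abelian category $\bT$.

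The main obstacle I anticipate is precisely condition (a). Even in $\bT$, the coequaliser statement for $U\otimes U\rightrightarrows U\to \unit$ is not an automatic consequence of $U\to\unit$ being nonzero unless one has additional structure linking $U$ with the image of the morphism inside $\bT$, and the universal property of an abelian envelope does not obviously transport such information back to $\bD$. If this approach stalls, a backup plan is to argue directly: fix $X\in\bT$, choose a surjection $F(Y)\twoheadrightarrow X'$ with $X\subset X'$ coming from the generating property, and try to enlarge $Y$ inside $\bD$ so that $X$ itself becomes a quotient, using the faithfulness/exactness of tensoring (Lemma~\ref{LemIndFaith}) to produce the required morphisms in $\bD$; failing that, one would look for a tensor functor out of $\bT$ distinguishing $\bT$ from the strongly generating envelope $\bT_0$ and contradict the universal property. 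The fact that the paper leaves this as a conjecture strongly suggests that no such direct route is yet known, and the honest answer is that (a) is the crux.
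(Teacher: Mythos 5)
This statement is a \emph{conjecture} in the paper: only the ``if'' direction is established (Theorem~\ref{MainThmAbEnv}), together with the weaker fact that an abelian envelope is \emph{generating} (Lemma~\ref{LemTGen}). Your plan cannot close the gap, and moreover contains a false intermediate claim. You assert that the inclusion of $\bD$ into the subcategory $\bT'$ of all subquotients of objects of $\bD$ is strongly generating ``once we replace subquotients by duals of quotients.'' This is not true: a subquotient $A/B$ of $X\in\bD$ is a quotient of the subobject $A\subset X$, and while $A$ is the dual of a quotient of $X^\ast$, that quotient need not lie in $\bD$, so $A$ need not be a quotient of anything in $\bD$. Indeed Proposition~\ref{PropExGLV} exhibits generating subcategories that are not strongly generating, and if your claim held, the uniqueness of abelian envelopes would immediately prove the whole conjecture. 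The paper's Lemma~\ref{LemTGen} instead obtains generation by applying the universal property of the envelope to the faithful inclusion $\bD\to\bT_1$, producing a tensor functor $\bT\to\bT_1$ whose composite with $\bT_1\hookrightarrow\bT$ is the identity.

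You also misidentify where the difficulty lies. Condition (a) of Theorem~A ($\cU=\cU^{ex}$) is \emph{automatic} for an abelian envelope: the functor $\bD\to\bT$ is fully faithful by Definition~\ref{DefAbEnv}, so Corollary~\ref{CorEmb}(2) applies. The genuine obstruction is condition (b)/(c): one must split $f\colon X\to Y$ by tensoring with a nonzero object of $\Ind\bD$, not merely of $\Ind\bT$. An injective $I\in\Ind\bT$ does split $I\otimes f$, but the paper's argument that $I$ is a filtered colimit of objects of $\bD$ (see the step (1)$\Rightarrow$(5) in the proof of Proposition~\ref{PropInd}, which invokes \cite[Lemma~4.1.3]{PreTop}) already presupposes that $\bD$ is strongly generating, so your proposed route is circular. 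Your honest acknowledgement that no direct route is known is correct, but the crux is the descent of the splitting object from $\Ind\bT$ to $\Ind\bD$, not the coequaliser condition.
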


Examples of tensor categories $\bT$ for which Conjecture~\ref{Conj} is known to be true are given by tensor categories with enough projective objects, as follows from Example~\ref{ExProj} and Lemma~\ref{LemTGen} below.
We refer to the introduction for a summary of the other evidence pointing towards the conjecture.
\begin{example}\label{ExProj}
Assume that the tensor category $\bT$ has a nonzero projective object. It follows that $\bT$ has enough projective objects and that every projective object is injective by~\cite[Proposition 6.1.3]{EGNO}. Consequently, a pseudo-tensor subcategory $\bD\subset\bT$ is strongly generating if and only if it is generating.
\end{example}

%It is clear that $\bD\subset\bT$ needs to be generating in the weak sense in order for $\bT$ to be the abelian envelope.
\begin{lemma}\label{LemTGen}
If $\bD\subset\bT$ is an abelian envelope, then $\bD$ is generating in $\bT$.
\end{lemma}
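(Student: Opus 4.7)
The plan is to identify the full subcategory $\bT'\subset\bT$ consisting of all subquotients of objects of $\bD$, show that it is itself a tensor category, and then use the universal property of the abelian envelope $\bD\subset\bT$ to conclude that the inclusion $\bT'\hookrightarrow\bT$ is essentially surjective.

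First I would check that $\bT'$ is a tensor subcategory of $\bT$. It contains $\unit\in\bD$ and is closed under direct sums trivially. For any morphism $A\to B$ in $\bT'$, with $A$ a subquotient of $X\in\bD$ and $B$ a subquotient of $Y\in\bD$, the kernel (computed in $\bT$) is a subobject of $A$, hence a subquotient of $X$, and similarly the cokernel is a subquotient of $Y$; thus $\bT'$ is an abelian subcategory of $\bT$ and the inclusion $\iota:\bT'\hookrightarrow \bT$ is exact. Closure under tensor products follows from the fact that if $A$ is a subquotient of $X$ and $B$ a subquotient of $Y$, then $A\otimes B$ is a subquotient of $X\otimes Y$ (using that tensoring with any object in a tensor category is exact on both sides; this is standard and a direct consequence of rigidity, or of Lemma~\ref{LemIndFaith}(2) applied inside $\bT$). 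Closure under duals is immediate since $(-)^\ast$ and ${}^\ast(-)$ are exact anti-autoequivalences of $\bT$. Finally $\End_{\bT'}(\unit)=\End_\bT(\unit)=k$ since $\iota$ is fully faithful. So $\bT'$ is a tensor category over $k$ and $\iota$ is a tensor functor.

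Next I would apply the universal property of the abelian envelope. The inclusion $\bD\hookrightarrow\bT'$ is a faithful $k$-linear monoidal functor to the tensor category $\bT'$, so by the defining equivalence
\[\Tens(\bT,\bT')\;\simeq\; \PTens^{faith}(\bD,\bT')\]
it extends (uniquely up to isomorphism) to a tensor functor $G:\bT\to\bT'$ with $G|_\bD$ isomorphic to the inclusion $\bD\hookrightarrow\bT'$. Now the composite $\iota\circ G:\bT\to\bT$ is a tensor functor whose restriction to $\bD$ is isomorphic to the original inclusion $\bD\hookrightarrow\bT$. Applying the universal property once more with $\bT_1=\bT$, where $\id_\bT$ also restricts to the inclusion $\bD\hookrightarrow\bT$, the essential uniqueness in the equivalence yields a monoidal isomorphism $\iota\circ G\simeq \id_\bT$. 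In particular, for every $X\in\bT$ we have $X\simeq \iota(G(X))$, so $X$ is isomorphic to an object of $\bT'$, meaning $X$ is a subquotient of an object of $\bD$. Hence $\bD$ is generating in $\bT$.

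The only genuine subtlety is verifying that $\bT'$ is a tensor category (in particular abelian and closed under tensor products); once that is established, the argument is simply a double application of the universal property, using $\bT_1=\bT'$ to produce the retraction and $\bT_1=\bT$ to identify its composition with $\iota$ as the identity.
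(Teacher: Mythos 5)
Your proof is correct and follows essentially the same route as the paper: form the full subcategory of all subquotients of objects of $\bD$, verify it is a tensor subcategory, and use the universal property of the envelope twice to produce a retraction $\bT\to\bT'$ whose composite with the inclusion is isomorphic to $\id_\bT$. The paper's version is just a terser statement of the same argument.
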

\begin{proof}
Denote by $\bT_1$ the topologising subcategory generated by $\bD$, that is the full subcategory of $\bT$ of all subquotients of objects in $\bD$. It follows easily that $\bT_1$ is also closed under taking duals and tensor products. Hence $\bT_1$ is a tensor subcategory of $\bT$. By the universal property of $\bD\to\bT$ it follows that there exists a tensor functor $\bT\to\bT_1$ such that the composite $\bT\to\bT_1\hookrightarrow \bT$ is the identity. We conclude that $\bT_1\hookrightarrow \bT$ is an equivalence.
\end{proof}

We cannot reverse the implication in Lemma~\ref{LemTGen}; a tensor category is {\em not} the abelian envelope of every generating pseudo-tensor subcategory, see Example~\ref{ExGP}.

\subsection{Morphisms to the tensor unit}
Let $\bD$ denote a pseudo-tensor category. In this subsection, we point to some natural conditions on morphisms to the tensor unit for $\bD$ to have a (weak) abelian envelope. These morphisms also play a central role in the construction of abelian envelopes from~\cite{PreTop} that will be refined in the next section.

\subsubsection{}\label{ExEp}
We denote by $\cU=\cU(\bD)$ the class of all nonzero morphisms $U\to\unit$ in $\bD$. We consider two potential properties of such $u:U\to \unit$:
\begin{enumerate}[label=(\roman*)] 
\item[(Ep)] The morphism $u:U\to\unit$ is an epimorphism in $\bD$.
\item[(Ex)] The diagram $$\xymatrix{U\otimes U\ar@<0.5ex>[r]^-{u\otimes U}\ar@<-0.5ex>[r]_-{U\otimes u}&U\ar[r]^-{u}&\unit}$$
is a coequaliser in $\bD$.
\end{enumerate}
Of course (Ex) implies (Ep) and we will often regard (Ex) via the equivalent formulation that the sequence 
$$\sigma_u:\quad U\otimes U\xrightarrow{u\otimes U-U\otimes u}U\xrightarrow{u}\unit\to 0$$
be exact. Denote the subclasses of morphisms which satisfy (Ex) and (Ep) by $\cU^{ex}\subset\cU^{ep}\subset\cU$. 

%We will freely use the observation that if $\bD$ admits a fully faithful functor into a tensor category, then $\cU(\bD)=\cU^{ex}(\bD)$, 
An alternative proof of the following proposition is given in \cite[Theorem~4.2.2]{PreTop}.
\begin{prop}\label{PropTEx}
If $\bT$ is a tensor category, then $\cU(\bT)=\cU^{ex}(\bT)$. 
\end{prop}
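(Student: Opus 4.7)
The plan is to show that every nonzero $u:U\to\unit$ in $\bT$ satisfies property (Ex); since the inclusion $\cU^{ex}(\bT)\subseteq\cU(\bT)$ is automatic, this suffices. Because $\bT$ is abelian, the coequaliser condition is equivalent to exactness of $\sigma_u$ at $U$ and at $\unit$. Exactness at $\unit$, i.e.\ that $u$ is an epimorphism, is immediate from simplicity of $\unit$ (recalled in the preliminaries): the image of $u$ is a nonzero subobject of $\unit$, hence equals $\unit$.

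For exactness at $U$, let $i:K\hookrightarrow U$ denote the kernel of $u$. The inclusion $\im(u\otimes U-U\otimes u)\subseteq K$ is formal: using the coherence isomorphism $\unit\otimes\unit\simeq\unit$, both composites $u\circ(u\otimes U)$ and $u\circ(U\otimes u)$ coincide with $u\otimes u$, so $u$ kills their difference. For the reverse inclusion, I would restrict $u\otimes U-U\otimes u$ along $i\otimes U:K\otimes U\to U\otimes U$. The first term pulls back to $(u\circ i)\otimes U=0$, while by naturality of the right unit constraint the second term becomes the composite
$$K\otimes U\xrightarrow{K\otimes u} K\otimes\unit\simeq K\xrightarrow{i} U.$$
Since every object of the tensor category $\bT$ is rigid, $K\otimes-$ is exact; since $u$ is an epimorphism by the preceding step, $K\otimes u$ is an epimorphism. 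Thus the image of this composite is exactly $K\subseteq U$, showing $\im(u\otimes U-U\otimes u)\supseteq K$, and combined with the other inclusion this gives the desired equality with $\ker u$.

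The only (mild) technical point I would want to record carefully is the exactness of $K\otimes-$ on $\bT$: this uses that $K$, like every object of the tensor category $\bT$, is rigid, so that $K\otimes-$ admits both a left and a right adjoint (namely $K^\ast\otimes-$ and ${}^\ast K\otimes-$) and is therefore exact on both sides. Once this and the naturality manipulation on $U\otimes u$ are in place, the argument closes without further work.
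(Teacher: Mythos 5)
Your argument is correct. Both inclusions are established soundly: the containment $\im(u\otimes U-U\otimes u)\subseteq\ker u$ is the formal coherence computation, and the reverse containment follows from your restriction along $i\otimes U$, since $(U\otimes u)\circ(i\otimes U)=i\otimes u$ factors as the epimorphism $K\otimes u$ (using right exactness of $K\otimes-$ and the fact that $u$ is epi by simplicity of $\unit$) followed by the monomorphism $i$ under the unit constraints, so its image is exactly $K$. Together with the observation that in an abelian category the coequaliser condition amounts to $u$ being epi with $\ker u=\im(u\otimes U-U\otimes u)$, this closes the proof.

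Your route is genuinely different from the paper's. The paper first dualises to a nonzero morphism $\unit\to U$, passes to $\Ind\bT$, picks a nonzero injective object $I$ there, splits the monomorphism $I\otimes u$ between injectives, and exhibits an explicit contracting homotopy showing that $I\otimes(\unit\to U\to U\otimes U)$ is split exact; exactness of the original sequence then follows from exactness and faithfulness of $I\otimes-$. Your proof stays entirely inside $\bT$: it needs only simplicity of $\unit$ and exactness of $K\otimes-$ for the rigid object $K=\ker u$, and avoids ind-completions and injectives altogether, so it is more elementary and self-contained. What the paper's splitting technique buys in exchange is reusability: the same injective-object mechanism drives Lemma~\ref{LemAu}, Lemma~\ref{LemSplit} and ultimately Theorem~\ref{Thm1}, whereas your computation is specific to morphisms into the simple unit object. (A minor convention point only: which of $K^\ast\otimes-$ and ${}^\ast K\otimes-$ is the left versus right adjoint of $K\otimes-$ depends on conventions, but all that matters is that both adjoints exist, so $K\otimes-$ is exact.)
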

\begin{proof}
Using the duality $(-)^\ast$ we can instead prove the corresponding claim for nonzero morphisms $\unit\to U$.
Let $u: \unit\to U$ be a nonzero morphism and let $I\in\Ind\bT$ be a (nonzero) injective object. Then the induced monomorphism $I\otimes u:I\to I\otimes U$ between injective objects splits, giving us $f:I\otimes U\to I$ with $f\circ(I\otimes u)=\id_I$. It follows that
$$0\to I\xrightarrow{I\otimes u}I\otimes U\xrightarrow{I\otimes u\otimes U-I\otimes U\otimes u} I\otimes U\otimes U$$
is split exact by the morphism $f\otimes U: I\otimes U\otimes U\to I\otimes U$:
$$(I\otimes u)\circ f+(f\otimes U)\circ (I\otimes u\otimes U-I\otimes U\otimes u)\;=\;f\otimes u+\id_{I\otimes U}-f\otimes u=\id_{I\otimes U}.$$
Since $I\otimes-$ is faithful and exact, the original sequence $\unit\to U\to U\otimes U$ was also exact.
\end{proof}

As an immediate consequence we get some necessary conditions for the existence of linear monoidal functors to tensor categories.
\begin{corollary}\label{CorEmb} Let $\bT$ be a tensor category over some field extension $K/k$.
\begin{enumerate} 
\item If there is a faithful $k$-linear monoidal functor $\bD\to\bT$, then $\cU(\bD)=\cU^{ep}(\bD)$. Moreover, if $k=K$, then 
$$
0\to\bD(\unit,\unit)\to\bD(U,\unit)\to\bD(U\otimes U,\unit)$$
is exact for each $u:U\to\unit$ in $\cU(\bD)$.
\item If there is a fully faithful $k$-linear monoidal functor $\bD\to\bT$, then $\cU(\bD)=\cU^{ex}(\bD)$.
\end{enumerate}
\end{corollary}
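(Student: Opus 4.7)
The plan is to transport Proposition~\ref{PropTEx} along the monoidal functor $F\colon \bD\to\bT$, reflecting the epimorphism and coequaliser structure from $\bT$ back to $\bD$ using faithfulness (and, in part (2), fullness). Note first that for any nonzero $u\colon U\to\unit$ in $\bD$, the image $F(u)\colon F(U)\to\unit$ is nonzero by faithfulness and hence lies in $\cU^{ex}(\bT)$ by Proposition~\ref{PropTEx}.

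For part (1), I would first observe that a faithful functor reflects epimorphisms: if $g\circ u=h\circ u$ in $\bD$, then $F(g)\circ F(u)=F(h)\circ F(u)$, so $F(g)=F(h)$ because $F(u)$ is epi in $\bT$, whence $g=h$. This yields $\cU(\bD)=\cU^{ep}(\bD)$. For the exact sequence when $k=K$, injectivity of $\bD(\unit,\unit)\to\bD(U,\unit)$ is immediate from $u$ being epi. To establish exactness in the middle, I would apply $F$ to any $f\colon U\to\unit$ with $f\circ(u\otimes U)=f\circ(U\otimes u)$: since the analogous diagram is a coequaliser in $\bT$ by Proposition~\ref{PropTEx}, there is a unique $g'\colon\unit\to\unit$ in $\bT$ with $F(f)=g'\circ F(u)$. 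Now $g'\in\End_\bT(\unit)=K=k$, and $k$-linearity of $F$ on $\End(\unit)=k$ identifies $g'=F(g)$ for a unique $g\in k=\End_\bD(\unit)$; faithfulness upgrades $F(g\circ u)=F(f)$ to $g\circ u=f$.

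For part (2), the extra fullness hypothesis lets the same strategy work with an arbitrary target $X\in\bD$ in place of $\unit$. Given $f\colon U\to X$ in $\bD$ with $f\circ(u\otimes U)=f\circ(U\otimes u)$, applying $F$ and Proposition~\ref{PropTEx} yields $g'\colon\unit\to F(X)$ in $\bT$ with $g'\circ F(u)=F(f)$; fullness realises $g'=F(g)$ for some $g\colon\unit\to X$ in $\bD$, and faithfulness gives $g\circ u=f$. Uniqueness of such $g$ follows from $u$ being an epimorphism in $\bD$, established in part (1).

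The only genuine subtlety is the scalar-field issue in the second assertion of (1): without assuming $k=K$ the scalar $g'$ produced in $\bT$ lives in $\End_\bT(\unit)=K$ rather than $k$ and may fail to lift to $\bD$. This is precisely what forces the hypothesis $k=K$, and the argument bifurcates naturally into this scalar-lifting step and the purely categorical reflection of (co)equalisers through the (fully) faithful functor $F$.
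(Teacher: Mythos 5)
Your proposal is correct and is essentially the argument the paper intends: the corollary is stated as an immediate consequence of Proposition~\ref{PropTEx}, obtained exactly as you do by pushing $u$ forward to $\cU^{ex}(\bT)$ and reflecting the epimorphism/coequaliser property back through the (fully) faithful monoidal functor, with the hypothesis $k=K$ entering only to lift the scalar $g'\in\End_\bT(\unit)$ to $\bD$.
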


We provide some non-examples for properties (Ep) and (Ex). For examples of pseudo-tensor categories where (Ep) is satisfied but (Ex) is not, we refer to \cite[Lemma~2.3.4(i)]{AbEnv1} or \cite[Proposition~4.5.3(i)]{PreTop}. We show that (Ep) is not always satisfied.

\begin{example}\label{ExTria}
Every epimorphism in a triangulated category is split, for instance by~\cite[Corollary 1.2.7.]{Ne}. In particular, if $\bD$ is a triangulated pseudo-tensor category with a faithful $k$-linear monoidal functor to a tensor category, then every nonzero morphism $U\to\unit$ in $\bD$ is a split epimorphism.
\end{example}

A special case of the previous example is the following.

\begin{example}\label{Klein}
Let $k$ be a field of characteristic $p>0$ and consider a finite group $G$ of order divisible by $p$. Let $\bD=\Stab G$ be the quotient of the representation category $\Rep_k G$ with the ideal of projective modules. 
Then $\cU^{ep}(\bD)$ comprises only split morphisms $U\to\unit$. For instance if $G=C_p$ for $p>2$ or $G=C_2\times C_2$ for $p=2$, we have $\cU^{ep}\not=\cU$.
\end{example}
%We have the objects $Q_1,Q_2\in \bD$ which correspond to the regular modules of $G,H$ with trivial action of the other group. In particular $Q_1\otimes Q_2=0$ and $\bD(Q_1,Q_2)=0$. However, we have
%$$\dim \bD(Q_1,\unit)=1=\dim \bD(\unit,Q_2),$$
%hence there is a nonzero morphism $Q_1\to\unit$, resp. $\unit\to Q_2$, which is not an epimorphism, resp. not a monomorphism.\footnote{generalise to triangulated categories?}

\begin{lemma}\label{LemSur}
Consider an epimorphism $p:V\to U$ and a morphism $u: U\to\unit$ in $\bD$. If $u\circ p$ is in $\cU^{ex}$, then also $u\in\cU^{ex}$.
\end{lemma}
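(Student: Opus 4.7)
The plan is to transfer the coequaliser property from $u\circ p$ to $u$ along the epimorphism $p$, using the interchange law to compare the two coequaliser diagrams.

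First, I would observe that since $u\circ p \in \cU^{ex}$, the morphism $u\circ p$ is a coequaliser, hence an epimorphism, which immediately forces $u$ to be an epimorphism (so in particular nonzero). This already gives $u\in\cU^{ep}$, but we need the stronger coequaliser property.

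Second, the key structural input is that the following square commutes (including both pairs of parallel arrows):
\[
\begin{array}{ccc}
V\otimes V & \xrightarrow{\;(up)\otimes V,\; V\otimes(up)\;} & V \\
{\scriptstyle p\otimes p}\;\downarrow & & \downarrow\;{\scriptstyle p} \\
U\otimes U & \xrightarrow{\;u\otimes U,\; U\otimes u\;} & U
\end{array}
\]
Indeed, by the interchange law (modulo unit isomorphisms) $(u\otimes U)\circ(p\otimes p) = (up)\otimes p = p\circ((up)\otimes V)$, and similarly for the other pair of arrows.

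Third, I would verify the universal property of the coequaliser of $u\otimes U, U\otimes u$ directly. Cofork: $u\circ(u\otimes U) = u\otimes u = u\circ(U\otimes u)$ is immediate. For universality, let $f:U\to W$ satisfy $f\circ(u\otimes U)=f\circ(U\otimes u)$. Pre-composing with $p\otimes p$ and using commutativity of the square yields $(fp)\circ((up)\otimes V)=(fp)\circ(V\otimes(up))$. The hypothesis that $u\circ p \in \cU^{ex}$ then provides a unique $g:\unit\to W$ with $g\circ(up)=fp$. Since $p$ is an epimorphism this gives $g\circ u=f$, and since $u$ is an epimorphism such $g$ is unique. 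Hence $u\in\cU^{ex}$.

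The calculation is essentially formal once the commutative square is in place; the only thing that requires care is the bookkeeping of unit isomorphisms when writing $p\circ((up)\otimes V) = (up)\otimes p$, but this is a routine application of naturality of the left unitor together with the interchange law.
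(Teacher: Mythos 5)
Your proof is correct and is essentially the paper's argument: the paper simply displays the same commutative diagram relating the two coequaliser forks via $p\otimes p$ and $p$ and asserts the transfer of the universal property, which you have carried out in detail (including the unitor bookkeeping and the epimorphism step needed to descend $g\circ(up)=fp$ to $g\circ u=f$).
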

\begin{proof}
This follows easily from the commutative diagram
$$
\xymatrix{U\otimes U\ar@<0.5ex>[r]\ar@<-0.5ex>[r]&U\ar[r]&\unit.\\
V\otimes V\ar@{->>}[u]\ar@<0.5ex>[r]\ar@<-0.5ex>[r]&V\ar@{->>}[u]\ar@{->>}[ur]
}$$
\end{proof}
\begin{question}\label{QueUV}
Consider morphisms  $u:U\to \unit$ and $v:V\to\unit$ in $\bD$. As a special case of Lemma~\ref{LemSur}, we find that if $u\otimes v\in\cU^{ex}$, then $u,v\in\cU^{ex}$. If, on the other hand, both $u,v\in\cU^{ex}$, does it follows that $u\otimes v\in\cU^{ex}$?
\end{question}

\subsection{A tensor category is its own envelope}

The following is a special case of Theorem~\ref{MainThmAbEnv} and provides a converse to the well-known observation that tensor functors are faithful, see \cite[Corollaire~2.10(i)]{Del90}.

\begin{theorem}\label{Thm1}
A linear monoidal functor $F:\bT_1\to\bT_2$ between two tensor categories is faithful if and only if it is exact. In other words, every tensor category is its own abelian envelope.
\end{theorem}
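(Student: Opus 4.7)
The statement has two implications which I would prove separately. The easier direction ``exact implies faithful'' proceeds directly, while ``faithful implies exact'' is a clean application of Proposition~\ref{PropGF}.

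For ``exact implies faithful'', suppose $F$ is exact and $f:X\to Y$ satisfies $F(f)=0$. Since exact functors preserve images, $F(\im f)=\im F(f)=0$, so it suffices to show that $F(Z)=0$ forces $Z=0$. If $Z\ne 0$, the triangle identity $(\id_Z\otimes\ev_Z)\circ(\co_Z\otimes\id_Z)=\id_Z$ forces $\co_Z:\unit\to Z\otimes Z^\ast$ to be nonzero, and since the unit of a tensor category is simple (as recalled in Section~\ref{SecPrel}) the morphism $\co_Z$ is then a monomorphism. Applying the exact monoidal functor $F$ produces a monomorphism $\unit\simeq F(\unit)\hookrightarrow F(Z)\otimes F(Z^\ast)=0$, contradicting $\unit\ne 0$ in $\bT_2$.

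For ``faithful implies exact'', I would invoke Proposition~\ref{PropGF} for the identity functor $\id:\bT_1\to\bT_1$, regarding the source as a pseudo-tensor category and the target as a tensor category. Conditions (G) and (F) both hold trivially: every object of $\bT_1$ is a quotient of itself, and in (F) one takes $q=\id_X$, so that $F(q)=\id_X$ is epic and, since $F=\id$ is full, $a\circ F(q)=a$ lies in the image of $F$. Proposition~\ref{PropGF} then identifies $(\id,\bT_1)$ as a weak abelian envelope of $\bT_1$, so composition with $\id$ yields an equivalence $\Tens(\bT_1,\bT_2)\simeq\PTens^{faith}(\bT_1,\bT_2)$ for every tensor category $\bT_2$. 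In particular the given faithful linear monoidal $F$ is (naturally isomorphic to) a tensor functor, and hence exact. Because $\id$ is moreover fully faithful, $(\id,\bT_1)$ is in fact an abelian envelope of $\bT_1$ in the sense of Definition~\ref{DefAbEnv}, giving the ``in other words'' assertion.

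Neither direction presents a real obstacle once Proposition~\ref{PropGF} is available; the genuine content is in the easy half, which rests only on the triangle identities and simplicity of the unit. The argument also makes transparent that Theorem~\ref{Thm1} is the $\bD=\bT$ special case of Theorem~\ref{MainThmAbEnv}.
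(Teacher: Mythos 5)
Your proof is correct, but it takes a genuinely different route from the one the paper writes out. For the direction ``faithful implies exact'' you apply Proposition~\ref{PropGF} to the identity functor of $\bT_1$, for which (G) and (F) are indeed trivially satisfied; this makes $(\id,\bT_1)$ a weak abelian envelope of itself, so every faithful linear monoidal functor out of $\bT_1$ is isomorphic to, hence is, an exact one. This is precisely the derivation the paper alludes to when it calls Theorem~\ref{Thm1} ``a special case of Theorem~\ref{MainThmAbEnv}'', and it is essentially the original proof from \cite{PreTop}. The paper, however, deliberately gives an \emph{alternative} argument that avoids importing \cite[Corollary~4.4.4]{PreTop} as a black box: it first proves Lemma~\ref{LemSplit}, characterising exactness of a sequence $\xi$ by the existence of a nonzero $u:\unit\to U$ making $\xi\to U\otimes\xi$ nullhomotopic, and then observes that a faithful additive monoidal functor transports both the nonzero morphism $u$ and the chain homotopy. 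What your approach buys is brevity; what the paper's approach buys is self-containedness within the language of tensor categories plus reusable byproducts (Lemma~\ref{LemSplit} and Corollary~\ref{Corfu} feed directly into the proof of Theorem~\ref{ThmSym}). Your explicit treatment of the converse direction ``exact implies faithful'' via the triangle identity, simplicity of $\unit$, and preservation of monomorphisms is correct and slightly more than the paper does, since the paper simply cites \cite[Corollaire~2.10(i)]{Del90} for that half.
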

Since this statement is interesting in its own right, we provide an alternative proof which, contrary to the original proof in \cite{PreTop}, can be written entirely in the language of tensor categories. 
\begin{remark} Besides the proof from \cite{PreTop} an the proof below there is a third, even more direct proof for Theorem~\ref{Thm1}, which we briefly sketch. We can observe that a sequence
$$0\to X\to Y\to Z\to 0$$
in $\bT_1$ is exact if and only if the induced sequence
$$0\to X\otimes I\to Y\otimes I\to Z\otimes I\to 0$$
is split exact, for an arbitrary non-zero injective $I\in\Ind\bT_1$. The conclusion then follows since the indisaton of $F$ is again additive, faithful and monoidal. In a more pedestrian approach, one can express this splitting in terms of families of morphisms in $\bT_1$ and use the faithful, additive and monoidal properties of $F$ to derive that it sends exact sequences to exact ones.
\end{remark}

Fix a tensor category $\bT$. For an associative algebra $B$ in $\Ind\bT$ we denote its category of left modules in $\Ind\bT$ by $\Mod_B$. We have the canonical functor $B\otimes-$ from $\bT$ to $\Mod_B$.
\subsubsection{}Consider a morphism $u:\unit\to U$ in $\bT$ and let $A_u$ denote the algebra in $\Ind\bT$ which is the quotient of the tensor algebra $T^\bullet U$ of $U$ with the ideal generated by image of
$$\unit \xrightarrow{(\id_{\unit},-u)}\unit\oplus U \subset T^\bullet U.$$

\begin{lemma}\label{LemAu}Consider a nonzero morphism $u:\unit\to U$ in $\bT$.
\begin{enumerate}
\item As an object in $\Ind\bT$, we can describe $A_u$ as the colimit of the diagram 
$$\xymatrix{
\unit \ar[r]^u &U\ar@<0.5ex>[r]^-{u\otimes U}\ar@<-0.5ex>[r]_-{U\otimes u}&\otimes^2U\ar@<0.7ex>[r]\ar[r]\ar@<-0.7ex>[r]&\otimes^3U\hspace{2mm}\cdots,
}$$
that is the colimit of the functor from the category of finite ordinals and order preserving inclusions induced by $u$. The algebra unit $\eta:\unit\to A_u$ is given by the canonical morphism from $\unit$ to this colimit.
\item For any algebra $B$ in $\Ind\bT$, the morphism $B\otimes u$ is split in $\Mod_B$ if and only if there exists an algebra morphism $A_u\to B$.
\item The algebra $A_u$ is not zero.
\end{enumerate}
\end{lemma}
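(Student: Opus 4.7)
My plan is organised by the three parts of the lemma. For part (1), the goal is to identify $A_u = T^\bullet U / J$ with the claimed colimit by making the ideal $J$ explicit. The two-sided ideal generated by the image of $(\id_{\unit}, -u)\colon \unit \to \unit \oplus U$ in $T^\bullet U$ is the sum, over all $i,j \geq 0$, of the images of the morphisms $U^{\otimes i} \otimes (\id_{\unit}, -u) \otimes U^{\otimes j}$, so passing to the quotient identifies each $x \in U^{\otimes n}$ with its image under every insertion $U^{\otimes i} \otimes u \otimes U^{\otimes j}\colon U^{\otimes i+j} \to U^{\otimes i+j+1}$. This is exactly the description of the colimit in $\Ind\bT$ of the functor from the category of finite ordinals with order-preserving inclusions sending $[n] \mapsto U^{\otimes n}$ and each face map to the corresponding insertion of $u$; the algebra unit $\eta$ is then the canonical morphism from $\unit = U^{\otimes 0}$ into the colimit.

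For part (2), I would combine the universal property of the tensor algebra with an adjunction for $B$-modules. Since $T^\bullet U$ is the free associative algebra in $\Ind\bT$ on $U$, algebra maps $T^\bullet U \to B$ correspond bijectively to morphisms $\phi\colon U \to B$ in $\Ind\bT$, and passing to the quotient $A_u$ imposes precisely the relation $\phi \circ u = \eta_B$. On the other hand, by the free-forgetful adjunction between $\Ind\bT$ and $\Mod_B$, left $B$-linear morphisms $s\colon B\otimes U \to B$ correspond bijectively to morphisms $\phi\colon U \to B$ in $\Ind\bT$ via right multiplication by $\phi$, and a short check using the unit axiom for $B$ shows $s$ is a retraction of $B \otimes u$ if and only if $\phi \circ u = \eta_B$. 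Combining the two bijections yields the desired correspondence.

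For part (3), the strategy is to construct a nonzero target for $A_u$. Pick a nonzero injective $I \in \Ind\bT$ and apply Proposition~\ref{PropTEx} to obtain a retraction $f\colon I \otimes U \to I$ of $I \otimes u$. Inductively set $f_0 = \id_I$ and $f_n := f_{n-1} \circ (f \otimes U^{\otimes n-1})$, so that $f_n\colon I \otimes U^{\otimes n} \to I$ is the iterated application of $f$ starting from the leftmost pair of factors. By induction on $n$, using $f \circ (I \otimes u) = \id_I$ for the leftmost insertion and commuting the insertion past the leftmost $f$ for interior positions together with the induction hypothesis, one shows $f_n \circ (I \otimes U^{\otimes i} \otimes u \otimes U^{\otimes n-i}) = f_{n-1}$ for all $0 \leq i \leq n$. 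By part (1) and cocontinuity of $I \otimes -$ in $\Ind\bT$, these maps assemble into a morphism $I \otimes A_u \to I$ splitting $I \otimes \eta$; hence $I \otimes A_u \neq 0$, and Lemma~\ref{LemIndFaith}(2) forces $A_u \neq 0$. The main obstacle is the compatibility verification in part (3), since ensuring each $f_n$ respects insertions of $u$ at every interior position, not just the leftmost, requires careful bookkeeping with tensor factors.
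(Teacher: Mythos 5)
Your proposal is correct and follows essentially the same route as the paper: part (2) via the universal property of the tensor algebra (the paper phrases it as a pushout of algebras, which encodes the same bijections), and part (3) via a nonzero injective $I$, a retraction $h$ of the split monomorphism $I\otimes u$, and the iterated maps $h$, $h\circ(h\otimes U)$, $\dots$ forming a cocone over the diagram of part (1). The only cosmetic point is that the retraction of $I\otimes u$ comes directly from $I\otimes u$ being a monomorphism between injective objects (as in the proof of Proposition~\ref{PropTEx}) rather than from the statement of that proposition itself.
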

\begin{proof}
Part (1) is immediate. Part (2) can be proved precisely as in \cite[Exemple~7.12]{Del90}. Indeed, by construction we have a pushout diagram in the category of algebras in $\Ind\bT$
$$\xymatrix{
T^\bullet U\ar[rr]&&A_u\\
T^\bullet \unit\ar[u]\ar[rr]&&\unit\ar[u]
}$$
where the left vertical arrow is induced from $u:\unit\to U\subset T^\bullet U$ and the lower horizontal arrow from $\id_{\unit}:\unit\to\unit$. Now consider a nonzero algebra $B$. Specifying an element in $\Hom_B(B\otimes U, B)$ is the same as specifying an algebra morphism $T^\bullet U\to B$. Demanding that $B\to B\otimes U\to B$ is the identity is the same as demanding that $T^\bullet \unit\to T^\bullet U\to B$ factors via the above morphism $T^\bullet\unit\to\unit$.

For part (3) we consider a nonzero injective object $I$ in $\Ind\bT$. We claim that there exists a morphism $f:I\otimes A_u\to I$ such that the composite $I\stackrel{I\otimes\eta}{\to} I\otimes A_u\stackrel{f}{\to} I$ is the identity, which shows that $A_u$ is nonzero. We can construct $f$ directly as follows. Since $I\otimes u$ is a monomorphism of injective objects there exists a morphism $h:I\otimes U\to I$ with $h\circ(I\otimes u)=\id_I$. We can use this to construct a commutative diagram
$$\xymatrix{
I\ar@{=}[dd] \ar[r]^{I\otimes u} &I\otimes U\ar[ldd]^{h}\ar@<0.5ex>[r]^-{I\otimes u\otimes U}\ar@<-0.5ex>[r]_-{I\otimes U\otimes u}&I\otimes U\otimes U\ar@<0.7ex>[r]\ar[r]\ar@<-0.7ex>[r]
\ar@/^/[ddll]^{h\circ (h\otimes U)}&I\otimes U\otimes U\otimes U
\ar@/^2pc/[ddlll]^[l]{h\circ (h\otimes U)\circ (h\otimes U\otimes U)}\hspace{2mm}\cdots\\
&&\\
I,
}$$
which implies existence of the requested morphism.
Alternatively, we can observe that the internal hom algebra $[I,I]$ is injective and hence splits the morphism $u$ (see~\cite[Section~8]{EO}). The universality in part (2) gives an algebra morphism $A_u\to[I,I]$ showing that $A_u$ is nonzero.
\end{proof}

\begin{lemma}\label{LemSplit}
A sequence $$\xi: 0\to X\xrightarrow{a} Y\xrightarrow{b} Z\to 0$$
 is exact in $\bT$ if and only if there exists a nonzero morphism $u:\unit\to U$ such that the induced chain map $\xi\xrightarrow{u\otimes \xi} U\otimes \xi$ is nullhomotopic.
\end{lemma}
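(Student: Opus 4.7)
Unpacking the chain-map hypothesis, $u\otimes\xi:\xi\to U\otimes\xi$ is nullhomotopic exactly when there exist morphisms $s_1:Y\to U\otimes X$ and $s_2:Z\to U\otimes Y$ satisfying
\begin{align*}
s_1\circ a = u\otimes X,\quad (U\otimes a)\circ s_1 + s_2\circ b = u\otimes Y,\quad (U\otimes b)\circ s_2 = u\otimes Z.
\end{align*}
The main tools will be that $W\otimes-$ is exact for any $W\in\Ind\bT$ (since every such $W$ is a filtered colimit of rigids, Lemma~\ref{LemNN}), that tensoring with a nonzero object of $\Ind\bT$ is faithful (Lemma~\ref{LemIndFaith}), and that $\unit$ is simple in $\bT$ (whence, by a short filtered-colimit argument on subobjects, also in $\Ind\bT$).

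For the forward implication, assuming $\xi$ is exact, I would pick a nonzero injective $I\in\Ind\bT$ together with a nonzero embedding $u:\unit\hookrightarrow I$ (supplied by injective hulls in the Grothendieck category $\Ind\bT$). The adjunction $\Hom(-,I\otimes X)\simeq \Hom(X^\ast\otimes-,I)$, valid since $X$ is rigid, shows $I\otimes X$ is injective, so the short exact sequence $I\otimes\xi$ splits in $\Ind\bT$. Choosing splittings $r,s$ with $(I\otimes a)\circ r+s\circ (I\otimes b)=\id_{I\otimes Y}$, the candidates $s_1 = r\circ(u\otimes Y)$ and $s_2 = s\circ(u\otimes Z)$ verify the three identities by direct computation using the splitting relations and naturality of $u\otimes-$.

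For the converse, first observe that $u\neq 0$ together with simplicity of $\unit$ in $\Ind\bT$ makes $u$ a monomorphism, and the exactness of $-\otimes W$ for $W\in\bT$ then makes $u\otimes W:W\to U\otimes W$ mono. The identity $s_1\circ a = u\otimes X$ thus forces $a$ mono, and postcomposing $(U\otimes b)\circ s_2 = u\otimes Z$ with the cokernel map $q:Z\to\coker b$ yields $(u\otimes \coker b)\circ q = 0$, hence $\coker b = 0$, so $b$ is epi. For the middle homology $H = \ker b/\im a$, write $i:\ker b\hookrightarrow Y$ and restrict the middle identity along $i$: the summand $s_2\circ b\circ i$ vanishes, leaving $u\otimes i = (U\otimes a)\circ(s_1\circ i)$, whose image lies in $U\otimes \im a\subseteq U\otimes\ker b$. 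Projecting $U\otimes\ker b\twoheadrightarrow U\otimes H$ annihilates this image, so naturality together with surjectivity of $\ker b\twoheadrightarrow H$ gives $u\otimes H = 0$; since $u\otimes H$ is mono, $H = 0$. I expect the main obstacle to be this final subobject chase: it requires carefully tracking how the three nullhomotopy identities interact with the exactness and faithfulness of $U\otimes-$ in order to localise the argument onto the correct subquotient of $Y$.
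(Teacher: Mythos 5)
Your converse direction is correct and takes a genuinely different route from the paper: instead of a direct diagram chase with the three homotopy identities, the paper passes to the algebra $A_u$ of Lemma~\ref{LemAu}, observes that the nullhomotopy forces $A_u\otimes\xi$ to be split exact in $\Mod_{A_u}$, and concludes by exactness and faithfulness of $A_u\otimes-$. Your chase (monicity of $u\otimes W$ from simplicity of $\unit$ and rigidity of $W$, then killing $\coker b$ and the middle homology $H$) is more elementary and avoids $A_u$ altogether; it checks out.

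The forward direction, however, has a genuine gap: you exhibit $u:\unit\to I$ with $I$ a nonzero injective of $\Ind\bT$, so the object $U=I$ you produce is not an object of $\bT$. The lemma is needed with $U\in\bT$ --- in the proof of Theorem~\ref{Thm1} one applies the functor $F$ to $U$, and in Corollary~\ref{Corfu} one tensors the resulting objects together inside $\bT$ --- so what you prove is strictly weaker than what is required. The repair is routine but must be carried out: write $I=\varinjlim_\alpha U_\alpha$ as a filtered colimit of objects of $\bT$; since $\unit$, $Y$ and $Z$ are compact in $\Ind\bT$, the morphisms $u$, $s_1$, $s_2$ factor through a finite stage, and the three homotopy identities (finitely many equalities of morphisms from compact objects into filtered colimits) then hold at some further stage $U_\beta\in\bT$, with the induced $\unit\to U_\beta$ still nonzero. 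The paper sidesteps this descent entirely by constructing $U$ inside $\bT$ as the pushout $E$ of $\ev:X\otimes{}^\ast X\to\unit$ along $a\otimes{}^\ast X$ and reading off the homotopy from the exactness of $\xi$, which is also what keeps its argument ``entirely in the language of tensor categories.''
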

\begin{proof}
Assume first that such $u:\unit\to U$ exists. Since $\eta:\unit\to A_u$ factors via $u$, existence of the chain homotopy implies that also the chain map $\xi\xrightarrow{\eta\otimes \xi} A_u\otimes \xi$ is nullhomotopic, which in turns implies that the identity chain map of $A_u\otimes \xi$ in $\Mod_{A_u}$ is nullhomotopic. The latter just means that $A_u\otimes \xi$ is a split exact sequence in $\Mod_{A_u}$. Since $A_u\otimes-$ is exact and faithful, see Lemma~\ref{LemAu}(3), it follows that $\xi$ is also exact.

Now assume that $\xi$ is exact. First we take the pushout
$$\xymatrix{
X\otimes {}^\ast X\ar@{^{(}->}[rr]^{a\otimes{}^\ast X}\ar@{->>}^{\ev}[d] && Y\otimes {}^\ast X\ar@{->>}[d]\\
\unit\ar@{^{(}->}[rr]^e&& E.
}$$
Applying adjunction shows that the left triangle of the diagram below is commutative
$$
\xymatrix{
X\ar@{^{(}->}[rr]^a\ar@{^{(}->}[d]_{e\otimes X}&&Y\ar@{^{(}->}[d]^{e\otimes Y}\ar[dll]\\
E\otimes X\ar@{^{(}->}[rr]_{E\otimes a}&& E\otimes Y.
}$$
Hence, the difference between the two morphisms $Y\to E\otimes Y$ restricts to zero when composed with $a$. By exactness of $\xi$, this difference factors via $b:Y\tto Z$. This gives the required homotopy.
\end{proof}

Before finishing the proof, we record the following corollary.
\begin{corollary}\label{Corfu}
For any morphism $f:X\to Y$ in $\bT$ there exists a nonzero morphism $u:\unit\to U$ and a morphism $g: Y\to U\otimes X$ with $u\otimes f=(U\otimes f)\circ g\circ f$.
\end{corollary}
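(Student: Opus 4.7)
The plan is to reduce the statement to Lemma~\ref{LemSplit} applied to the two short exact sequences obtained from the image factorization of $f$.

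First, I would factor $f$ as $f=i\circ p$, where $p\colon X\twoheadrightarrow I$ and $i\colon I\hookrightarrow Y$ with $I=\im(f)$. This produces two short exact sequences in $\bT$:
\[
\xi_1\colon\ 0\to \ker f\to X\xrightarrow{p}I\to 0,\qquad \xi_2\colon\ 0\to I\xrightarrow{i} Y\to Y/I\to 0.
\]
Applying Lemma~\ref{LemSplit} to each furnishes nonzero morphisms $u_1\colon\unit\to U_1$ and $u_2\colon\unit\to U_2$ together with null-homotopies of the chain maps $u_\ell\otimes\xi_\ell$. Reading off the relevant component of each homotopy yields a morphism $t\colon I\to U_1\otimes X$ with $(U_1\otimes p)\circ t = u_1\otimes I$ (from $\xi_1$) and a morphism $s\colon Y\to U_2\otimes I$ with $s\circ i = u_2\otimes I$ (from $\xi_2$).

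Next, I would set $U=U_2\otimes U_1$ and $u=u_2\otimes u_1\colon\unit\to U$. Since $\unit$ is simple, each $u_\ell$ is monic; by exactness of tensoring in $\Ind\bT$ (Lemma~\ref{LemIndFaith}) the morphism $u$ is also monic, and in particular nonzero. I would then define
\[
g\;=\;(U_2\otimes t)\circ s\colon\ Y\to U_2\otimes I\to U\otimes X.
\]

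Finally, the identity follows from bifunctoriality. Using $(U_1\otimes p)\circ t=u_1\otimes I$ and $s\circ i=u_2\otimes I$,
\[
(U\otimes p)\circ g\circ i \;=\; \bigl(U_2\otimes\bigl((U_1\otimes p)\circ t\bigr)\bigr)\circ s\circ i \;=\; (U_2\otimes u_1\otimes I)\circ(u_2\otimes I) \;=\; u\otimes I,
\]
and then naturality of tensoring with $u$ gives
\[
(U\otimes f)\circ g\circ f \;=\; (U\otimes i)\circ\bigl((U\otimes p)\circ g\circ i\bigr)\circ p \;=\; (U\otimes i)\circ(u\otimes I)\circ p \;=\; u\otimes f.
\]

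The main obstacle is essentially bookkeeping for the interchange law and naturality; the conceptual content is the idea of covering the two ``halves'' of $f$ --- its kernel-image and image-cokernel behaviour --- by separate invocations of Lemma~\ref{LemSplit} and combining the resulting half-splittings multiplicatively.
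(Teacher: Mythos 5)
Your proof is correct and follows essentially the same route as the paper: factor $f$ through its image, apply Lemma~\ref{LemSplit} to the two resulting short exact sequences, and tensor the relevant homotopy components together to produce $u$ and $g$. The only (immaterial) difference is that the paper applies the second instance of Lemma~\ref{LemSplit} to the kernel--image sequence already tensored with the first object $V$, whereas you apply it to the untensored sequence and tensor the resulting homotopy afterwards.
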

\begin{proof}
We can write $f$ as a composition of an epimorphism and monomorphism $X\tto A\hookrightarrow Y$. Applying Lemma~\ref{LemSplit} to the exact sequence $A\hookrightarrow Y\tto ?$ yields morphisms $v:\unit\hookrightarrow V$ and $Y\to V\otimes A$ such that the two paths from $A$ to $V\otimes A$ yield the same morphism in the following diagram:
$$\xymatrix{
X\ar@{->>}[rr]\ar@{^{(}->}[d]&& A\ar@{^{(}->}[rr]\ar@{^{(}->}[d]&& Y\ar@{^{(}->}[d]\ar[dll]\\
V\otimes X\ar@{->>}[rr]\ar@{^{(}->}[d]&& V\otimes A\ar@{^{(}->}[rr]\ar@{^{(}->}[d]\ar[dll]&& V\otimes Y\ar@{^{(}->}[d]\\
W\otimes V\otimes X\ar@{->>}[rr]&&W\otimes V\otimes A\ar@{^{(}->}[rr]&&W\otimes V\otimes Y.\\
}$$
Applying Lemma~\ref{LemSplit} to the exact sequence $?\hookrightarrow V\otimes X\tto V\otimes A$ then yields morphisms $w:\unit\hookrightarrow W$ and $ V\otimes A\to W\otimes V\otimes X$ such that the two paths from $V\otimes A$ to $W\otimes V\otimes A$ give the same morphism. By construction, every square in the diagram is commutative. We can then take $u=w\otimes v$.
\end{proof}

\begin{proof}[Proof of Theorem~\ref{Thm1}]
Consider a faithful linear monoidal functor $F:\bT_1\to \bT_2$ and a short exact sequence $\xi$ in $\bT_1$. By Lemma~\ref{LemSplit}, there exists a nonzero $\unit\to U$ in $\bT_1$ such that the induced chain map $\xi\to U\otimes \xi$ is nullhomotopic. Since $F$ is faithful, we obtain a nonzero $\unit\to F(U)$ and the induced chain map $F(\xi)\to F(U)\otimes F(\xi)$ is nullhomotopic since $F$ maps chain homotopies to chain homotopies. Again by Lemma~\ref{LemSplit}, $F(\xi)$ is exact, which concludes the proof.
\end{proof}

\begin{remark}
From the proof it is clear that Theorem~\ref{Thm1} extends from linear to additive monoidal functors.
\end{remark}

\subsection{Symmetric envelopes}

\begin{lemma}\label{LemSym}
Consider an abelian envelope $F:\bD\to\bT$ as in Definition~\ref{DefAbEnv}, such that $\bD,\bT,F$ are all symmetric. Then $F:\bD\to\bT$ is an abelian envelope in the symmetric sense, meaning 
for each symmetric tensor category $\bT_1/k$, composition with $F$ induces an equivalence 
$$\sTens(\bT,\bT_1)\;\simeq\; \sPTens^{faith}(\bD,\bT_1).$$
\end{lemma}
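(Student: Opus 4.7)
The plan is to restrict the equivalence $\Tens(\bT,\bT_1) \simeq \PTens^{faith}(\bD,\bT_1)$ of Definition~\ref{DefAbEnv} to the full sub-2-categories of symmetric functors on each side. Since $F$ is symmetric, the composition-with-$F$ functor sends symmetric tensor functors to symmetric monoidal functors, so we obtain a well-defined restriction $\sTens(\bT,\bT_1)\to\sPTens^{faith}(\bD,\bT_1)$, which is automatically fully faithful as the restriction of an equivalence between ambient categories along full subcategory inclusions on both sides. Thus the task reduces to proving essential surjectivity.

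Starting from a symmetric faithful monoidal functor $G_0:\bD\to\bT_1$, the non-symmetric universal property of $F$ produces a tensor functor $G:\bT\to\bT_1$ with $G\circ F\cong G_0$ as $k$-linear monoidal functors. I would then verify directly that $G$ is itself symmetric by comparing, for each pair $(X,Y)\in\bT\times\bT$, the two morphisms
$$\alpha_{X,Y},\;\beta_{X,Y}\;:\;G(X)\otimes G(Y)\;\longrightarrow\; G(Y\otimes X)$$
obtained respectively as the monoidal coherence of $G$ followed by $G$ of the braiding of $\bT$, and as the braiding of $\bT_1$ on $G(X)\otimes G(Y)$ followed by the monoidal coherence. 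Both $\alpha$ and $\beta$ are natural in each argument, and the symmetry of $G_0$ gives $\alpha_{X,Y}=\beta_{X,Y}$ whenever $X,Y\in F(\bD)$.

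The decisive step is to propagate this equality to all of $\bT\times\bT$ by an argument one variable at a time, using Lemma~\ref{LemTGen}. Fix $Y\in F(\bD)$ and regard $\alpha_{-,Y}$ and $\beta_{-,Y}$ as natural transformations between the functors $G(-)\otimes G(Y)$ and $G(Y\otimes-)$ from $\bT$ to $\bT_1$, both of which are exact (by exactness of $G$ and of tensoring with a fixed object in a tensor category). The full subcategory
$$\cE_Y\;:=\;\{X\in\bT\,:\,\alpha_{X,Y}=\beta_{X,Y}\}$$
is then closed under subobjects and quotients: given a monomorphism $X'\hookrightarrow X$, naturality of $\alpha,\beta$ yields a commutative square whose right-hand vertical $G(Y\otimes X')\hookrightarrow G(Y\otimes X)$ is a monomorphism, so both $\alpha_{X',Y}$ and $\beta_{X',Y}$ are uniquely determined by $\alpha_{X,Y}$ and $\beta_{X,Y}$ respectively; the dual argument using right exactness handles quotients. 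By Lemma~\ref{LemTGen}, every object of $\bT$ is a subquotient of an object of $F(\bD)\subset\cE_Y$, so $\cE_Y=\bT$. Now fixing any $X\in\bT$ and rerunning the same propagation in the second variable, starting from the agreement $\alpha_{X,Y}=\beta_{X,Y}$ for all $Y\in F(\bD)$ just established, gives $\alpha_{X,Y}=\beta_{X,Y}$ for all $Y\in\bT$. Hence $G$ is symmetric.

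The main obstacle is the two-variable nature of the symmetry condition, which forces the subquotient-propagation argument for natural transformations between exact functors to be invoked separately in each slot; the propagation itself is a short diagram chase whose only input is exactness of $G$ together with Lemma~\ref{LemTGen}.
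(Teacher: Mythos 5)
Your proof is correct and follows essentially the same route as the paper: restrict the non-symmetric equivalence and check that a tensor functor which is symmetric on the generating subcategory $F(\bD)$ (via Lemma~\ref{LemTGen}) is symmetric everywhere. Your two-variable subquotient-propagation argument is exactly the detail behind the paper's one-line appeal to ``naturality of braidings,'' so the only difference is that you spell out what the paper leaves as an easy verification.
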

\begin{proof}
It follows easily from the naturality of braidings that a tensor functor $\bT\to\bT_1$ between two symmetric tensor categories $\bT,\bT_1$ is symmetric when it is symmetric restricted to a generating pseudo-tensor subcategory. By Lemma~\ref{LemTGen}, the equivalence $\Tens(\bT,\bT_1)\simeq \PTens^{faith}(\bD,\bT_1)$ therefore induces the required one.
\end{proof}

\begin{remark}\label{RemSym}
 It is clear that Lemma~\ref{LemSym} also holds within the context of braided tensor categories, or coboundary tensor categories, etc.
%\item If $F:\bD\to\bT$ is an abelian envelope such that $\bD\subset\bT$ is strongly generating and $\bD$ is symmetric, then there is a canonical symmetric structure on $F,\bT$ such that $F$ is an abelian envelope in the symmetric sense. If Conjecture~\ref{Conj}  is true, it thus follows that any abelian envelope of a symmetric pseudo-tensor category is automatically symmetric.

\end{remark}

We conclude this subsection with an observation that symmetric abelian envelopes as in Theorem~\ref{MainThmAbEnv} can be reconstructed from the tannakian principle.

\begin{notation}\label{notpi}
	Given a faithful symmetric monoidal functor $F:\bD\to \bT_1$ from a symmetric pseudo-tensor category $\bD$ to an artinian symmetric tensor category $\bT_1$, we consider the associated affine group scheme $G:=\underline{\Aut}^{\otimes}(F)$ in $\bT_1$ as in~\cite[\S 8]{Del90}. The group scheme $\pi:=\underline{\Aut}^{\otimes}(\id_{\bT_1})$ is often called the \textbf{fundamental group}, and comes with a canonical homomorphism $\epsilon:\pi\to G$. It is standard to write $\Rep_{\bT_1}(G,\epsilon)$ for the category of representations of $G$ in $\bT_1$ which restrict to the evaluation action of $\pi$ under $\epsilon$.
\end{notation}

\begin{prop} Assume that $k$ is perfect.
Consider a symmetric pseudo-tensor category $\bD$ with a faithful symmetric monoidal functor $F:\bD\to\bT_1$ to an artinian symmetric tensor category $\bT_1$. 
If $\bD$ strongly generates a tensor category $\bT$, then $\bT\simeq \Rep_{\bT_1}(G,\epsilon)$, with notation as in~\ref{notpi}.
\end{prop}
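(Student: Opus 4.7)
The plan is to lift $F$ to a symmetric tensor functor on all of $\bT$ and then invoke a relative form of Tannakian reconstruction. Since $\bD$ strongly generates $\bT$, by Theorem~\ref{MainThmAbEnv} together with Lemma~\ref{LemSym}, the inclusion $\bD\subset\bT$ is a symmetric abelian envelope of $\bD$. Its universal property, applied to the faithful symmetric monoidal functor $F:\bD\to\bT_1$, produces a unique symmetric tensor functor $\bar F:\bT\to\bT_1$ extending $F$; by Theorem~\ref{Thm1} this $\bar F$ is automatically faithful and exact.

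Next I would identify $\underline{\Aut}^\otimes(\bar F)$ with $G=\underline{\Aut}^\otimes(F)$. The restriction map $\underline{\Aut}^\otimes(\bar F)\to G$ is injective since $\bD$ strongly generates $\bT$ and a tensor automorphism of $\bar F$ is determined by its values on any generating subcategory. For surjectivity, given a point $\eta$ of $G$, one constructs an extension to $\bar F$ as follows: for $Y\in\bT$, pick an epimorphism $p:X\tto Y$ with $X\in\bD$; since $\bD$ is closed under duals, $Y$ also embeds into some $Z\in\bD$, so the composite $f:X\tto Y\hookrightarrow Z$ is a morphism in $\bD$ (the subcategory being full), and naturality of $\eta$ along $f$ forces $\eta_X$ to preserve $\ker\bar F(p)=\bar F(\ker p)$. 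A standard diagram chase then shows that the induced action on $Y$ is independent of the choice of $p$ and respects the tensor structure, yielding the required extension.

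Finally, I would invoke relative Tannakian reconstruction (cf.\ \cite[Th\'eor\`eme~8.17]{Del90}) for the faithful exact symmetric tensor functor $\bar F:\bT\to\bT_1$, with $\bT_1$ an artinian symmetric tensor category over the perfect field $k$: this produces a symmetric tensor equivalence
$$\bT\;\stackrel{\sim}{\longrightarrow}\;\Rep_{\bT_1}\!\bigl(\underline{\Aut}^\otimes(\bar F),\epsilon\bigr)\;=\;\Rep_{\bT_1}(G,\epsilon).$$
The main obstacle is this last step: one must verify that Deligne's reconstruction theorem applies in the present relative setting, and this is precisely where the hypotheses that $k$ is perfect and $\bT_1$ is artinian are used. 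The extension of $F$ and the identification $\underline{\Aut}^\otimes(\bar F)\cong G$ are, by comparison, formal consequences of the universal property of the abelian envelope together with the strong generation of $\bT$ by $\bD$.
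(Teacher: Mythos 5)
Your overall route is the same as the paper's: use the universal property of the abelian envelope (Theorem~\ref{MainThmAbEnv} plus Lemma~\ref{LemSym}) to extend $F$ to a faithful exact symmetric tensor functor $\bar F:\bT\to\bT_1$, feed $\bar F$ into Deligne's relative reconstruction \cite[8.17]{Del90}, and then identify $\underline{\Aut}^{\otimes}(\bar F)$ with $G=\underline{\Aut}^{\otimes}(F)$. The first and third steps match the paper exactly.

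The one place where you diverge is the identification of the two automorphism group schemes, and your hand-made argument there has a gap. The group scheme $G$ is a functor on algebras $R$ in $\Ind\bT_1$, so ``a point $\eta$ of $G$'' is a monoidal automorphism of the composite $\bD\xrightarrow{F}\bT_1\xrightarrow{R\otimes-}\Mod_R$, and this composite is only \emph{right} exact. In your surjectivity argument you choose $X\tto Y\hookrightarrow Z$ with $X,Z\in\bD$ and argue that naturality of $\eta$ along $f:X\to Z$ forces $\eta_X$ to preserve the kernel of $R\otimes\bar F(p)$. But naturality only gives that $\eta_X$ preserves $\ker(R\otimes\bar F(f))$, and since $R\otimes\bar F(i)$ need not be a monomorphism, this kernel can be strictly larger than $\ker(R\otimes\bar F(p))$; preserving the larger kernel does not imply preserving the smaller one, so the descent of $\eta_X$ to $R\otimes\bar F(Y)$ is not yet justified. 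This is precisely the point the paper handles by citing \cite[Corollary~4.4.4]{PreTop}, which asserts directly that the natural (auto)morphisms of $\bD\to\bT_1\to\Mod_R$ and of $\bT\to\bT_1\to\Mod_R$ coincide for every algebra $R$ in $\Ind\bT_1$. Replacing your diagram chase by that citation (or by a corrected descent argument that works after base change to $\Mod_R$) completes the proof; the remaining steps, including the appeal to \cite[8.17]{Del90} for artinian $\bT_1$ over perfect $k$, are as in the paper.
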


\begin{proof}
Since $\bT$ is an abelian envelope for $\bD$, there is a faithful (exact) tensor functor $H:\bT\to\bT_1$. Now Deligne~(\cite[8.17]{Del90}) tells us that $\bT$ is tensor equivalent to $\Rep_{\bT_1}(G,\epsilon)$, where $G$ is the affine group scheme associated to the functor $H$. By~\cite[Corollary~4.4.4]{PreTop}, the natural automorphisms of the functor $\bD\xrightarrow{F}\bT_1\to \mathsf{Mod}_R$ are exactly the natural automorphisms of the functor $\bT\xrightarrow{H}\bT_1\to\mathsf{Mod}_R$, for every algebra $R$ in $\Ind\bT_1$.
This shows that $G$ is also the affine group scheme associated to $F$. 
\end{proof}

%%%%%%%%%%%%%%%%%%%%%%%%%%%%%%%%%%%%%%%%%%%%%%%%%%%%%%%%%%%%%%%%%%%%%%%%%%%%%%%%%%%%%%%%%%%%%%%%%%%%%%%%%%%%%%%%%%%%

\section{An intrinsic criterion}\label{SecCrit}
Let $\bD$ be a pseudo-tensor category over $k$. We give an intrinsic criterion on $\bD$ which determines whether there exists an abelian envelope in which $\bD$ is strongly generating.

In \cite{PreTop} it was observed that there is only one monoidal category which can potentially be a tensor category in which $\bD$ is strongly generating.  This is the category $\Sh(\bD,\cT_{\cU})$ below, or more precisely its subcategory of compact objects. We therefore start with a subsection which recalls the essential properties of this category.

\subsection{Sheaf categories}

 We will review $\Sh(\bD,\cT_{\cU})$ in slightly greater generality, that is for subclasses $\cV\subset\cU$. Because of the above, we are only interested in cases where $\cT_{\cV}=\cT_{\cU}$, so $\Sh(\bD,\cT_{\cU})=\Sh(\bD,\cT_{\cV})$, which is equivalent to $\cV$ being dense in the definition below. Even though they do not lead to new categories, such $\cV\subset\cU$ do show up naturally, see for instance Proposition~\ref{PropDelTP} below, or \cite[Theorem~A]{AbEnv1}. 
 
% Even though, $\Sh(\bD,\cT_{\cU})$ is the only candidate for the 

 %This generality is required for latter sections. Some statements about $\bD$, see for instance , can be equivalent to $\Sh(\bD,\cT_{\cV})$ being the ind-completion of a tensor category for $\cV\subset\cU$. This is equivalent to saying that $\Sh(\bD,\cT_{\cU})$ is the ind-completion of a tensor category and that $\cT_{\cU}=\cT_{\cV}$. The latter can be characterised intrinsically as saying that $\cV$ is dense in the definition below. Another example of the canonical appearance of $\Sh(\bD,\cT_{\cV})$ for $\cV\not=\cU$ is 
\begin{definition}\label{DefcV}
Consider a subclass $\cV\subset\cU^{ep}(\bD)$.
\begin{enumerate}
\item Denote by $\bar{\cV}\subset \cU^{ep}$ the class of the morphisms of the form
$$u_1\otimes\cdots\otimes u_n\;:\; U_1\otimes\cdots \otimes U_n\,\to\,\unit,$$
with $u_i\in\cV$. 
\item We say that $\cV$ is {\bf permutable} if for each $u:U\to\unit$ in $\cV$ and $A\in\bD$, there exists a commutative square
$$
\xymatrix{
\unit\ar[rr]^{\co_A}&&A\otimes A^\ast\\
V\ar@{-->}[u]^v\ar@{-->}[rr]&&A\otimes U\otimes A^\ast\ar[u]_{{A\otimes u \otimes A^\ast}}&&&\mbox{with $v\in\bar{\cV}$. }
}
$$

\item We say that $\cV$ is {\bf dense} if for every nonzero $u:U\to\unit$, there exists $v:V\to\unit$ in $\bar{\cV}$ and $f:V\to U$ with $v=u\circ f$.
\end{enumerate}

\end{definition}

\begin{example}\label{ExVMon}
\begin{enumerate}
\item If $\bD$ is symmetric, or more generally  the forgetful functor $\mathcal{Z}(\bD)\to\bD$ from the Drinfeld centre is essentially surjective, then every  $\cV\subset\cU^{ep}(\bD)$ is permutable.
\item If $\bT$ is a tensor category, then $\cU(\bT)$ is permutable, as the diagram in \ref{DefcV}(2) can be completed by a pullback, and we have $v\not=0$ since $A\otimes u\otimes A^\ast$ is an epimorphism.
\item For $\bU$ as in  Example~\ref{ExUni}, we find that $\cU(\bU)=\cU^{ep}(\bU)$ is {\em not} permutable. Indeed, it follows easily that for any $i\not=j\in\mZ$, the diagram
$$\unit\xrightarrow{\co_{X_j}} X_j\otimes X_{j+1}\xleftarrow{X_j\otimes \ev_{X_i}\otimes X_{j+1}} X_j\otimes X_{i+1}\otimes X_i\otimes X_{j+1}$$
can only be completed to a commutative diagram as in \ref{DefcV}(2) with $v=0$. 
\end{enumerate}
\end{example}
\subsubsection{}\label{SecShD}
Given a morphism $u:U\to\unit$ in $\bD$, recall that we write $\sigma_u$ for the sequence displayed in~\ref{ExEp}.
Following~\cite[\S 4.3]{PreTop}, we write $\Sh(\bD,\cT_{\cV})=\Sh(\bD,\cT_{\bar{\cV}})$ for the Grothendieck category of $k$-linear functors $F:\bD^{\op}\to\Vecc^{\infty}$ for which the sequence $F(\sigma_u\otimes X)$ is exact in $\Vecc^{\infty}$ for each $u\in\cV$ and $X\in\bD$. Denote by 
$$Z:\bD\to\Sh(\bD,\cT_{\cV})$$ the composition of the Yoneda embedding with the sheafification, see~\cite[\S1]{PreTop}.
 It is proved in \cite[Theorem~3.5.5 and Lemma~4.3.4]{PreTop} that, when $\cV$ is permutable, there exists an essentially unique monoidal structure on $\Sh(\bD,\cT_{\cV})$ which is biclosed and for which $Z$ admits a monoidal structure.

 \begin{lemma}\label{LemShD}
 Assume that $\cV\subset\cU$ is permutable.
 \begin{enumerate}
 \item For every $u\in \cV$, the sequence $Z(\sigma_u)$ is exact in $\Sh(\bD,\cT_{\cV})$, with $\sigma_u$ as in~\ref{ExEp}.
 \item An object in $\Sh(\bD,\cT_{\cV})$ is compact if and only if it has a presentation by objects in the image of~$Z$.
  \item 
 The monoidal functor $Z:\bD\to\Sh(\bD,\cT_{\cV})$ is fully faithful if and only if $\cV\subset\cU^{ex}$. In that case $Z$ is simply the Yoneda embedding $Y\mapsto\bD(-,Y)$.
 \item  The monoidal functor $Z:\bD\to\Sh(\bD,\cT_{\cV})$ is faithful if and only if $\cV\subset\cU^{ep}$. In the latter case, for every morphism $a: Z(X)\to Z(Y)$ there exists a morphism $q:X'\to X$ in $\bD$ such that $Z(q)$ is an epimorphism and $a\circ Z(q)$ is in the image of~$Z$.
 \item If the displayed sequence in Corollary~\ref{CorEmb}(1) is exact for each $u\in\bar{\cV}$ and $\cV\subset\cU^{ep}$, the endomorphism algebra of $\unit$ in $\Sh(\bD,\cT_{\cV})$ equals $k$.
 \end{enumerate}
 \end{lemma}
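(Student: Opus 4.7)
The plan is to handle the five claims sequentially, using the standard features of the Grothendieck-topology/localisation setup underlying $\Sh(\bD,\cT_\cV)$ from~\cite{PreTop}. For (1), the definition of $\Sh$ says that $F(\sigma_u)$ is exact, i.e.\ $\Hom(Z(\sigma_u),F)$ is left exact, for every sheaf $F$. Since injective sheaves cogenerate the Grothendieck category $\Sh(\bD,\cT_\cV)$, this forces $Z(\sigma_u)$ itself to be right exact. For (2), I would invoke the standard result that in a Grothendieck category with a set of compact generators, the compact objects are exactly the finitely presented ones, applied to the family $\{Z(X):X\in\bD\}$; compactness of $Z(X)$ follows from the adjunction formula $\Sh(Z(X),F)=F(X)$ and commutation of filtered colimits in $\Sh$ with evaluation at $X$. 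For (3), the presheaf $\bD(-,W)$ is already a sheaf precisely when $\bD(\sigma_u\otimes X,W)$ is exact for every $X$, which by rigidity $\bD(\sigma_u\otimes X,W)=\bD(\sigma_u,W\otimes X^\ast)$ collapses to $u\in\cU^{ex}$. When $\cV\subset\cU^{ex}$, the sheafification in the definition of $Z$ is trivial and $Z=Y$ is fully faithful by classical Yoneda; conversely, if $Z$ is fully faithful then applying $\Sh(-,Z(A))$ to the right-exact sequence $Z(\sigma_u)$ from~(1) recovers left-exactness of $\bD(\sigma_u,A)$ for every $A$, whence $u\in\cU^{ex}$.

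For (4), I would rely on the cover-representation of morphisms of sheafified representables: any $a:Z(X)\to Z(Y)$ is represented by a genuine morphism $U\otimes X\to Y$ in $\bD$ after pulling back along a cover $q=u\otimes X:U\otimes X\to X$ with $u\in\bar{\cV}$, and dually $Z(f)=0$ iff $f\circ(v\otimes X)=0$ for some such cover. When $\cV\subset\cU^{ep}$, the morphism $v\otimes X$ is epi in $\bD$ because $-\otimes X$ has right adjoint $-\otimes X^\ast$ and therefore preserves epimorphisms, so $f=0$, giving faithfulness of $Z$. Conversely, for $u\in\cV\setminus\cU^{ep}$ we can pick distinct $f,g$ with $f\circ u=g\circ u$; since $Z(u)$ is epi in $\Sh$ by~(1), we get $Z(f)=Z(g)$, contradicting faithfulness. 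The factorisation statement then follows directly from the cover representation of $a$, taking $q$ to be the cover on which $a$ is realised.

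For (5), the unit map $k\to\End_{\Sh}(Z(\unit))$ is injective by~(4). Given $a:Z(\unit)\to Z(\unit)$, the second part of~(4) supplies $u:U\to\unit$ in $\bar{\cV}$ and $b:U\to\unit$ in $\bD$ with $a\circ Z(u)=Z(b)$. Using that $Z(\sigma_u)$ is exact (by~(1)) we find $Z(b\circ(u\otimes U-U\otimes u))=0$, and faithfulness of $Z$ gives $b\circ(u\otimes U)=b\circ(U\otimes u)$. The displayed exactness in the hypothesis then yields a unique $\lambda\in k$ with $b=\lambda\circ u$; cancelling the epimorphism $Z(u)$ produces $a=\lambda\cdot\id_{Z(\unit)}$. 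The main obstacle throughout is establishing the cover-representation of sheaf morphisms used in~(4) and~(5); once that technical input is in hand from the sheafification machinery, the remaining arguments reduce to routine diagram chasing.
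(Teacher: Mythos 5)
Your proof is essentially correct, but it takes a genuinely different route from the paper: the paper proves nothing here, it simply cites the companion work \cite{PreTop} (Proposition~4.3.6(ii) for part (1), Lemma~4.3.1 together with Definition~3.1.1 and Lemma~1.3.1(ii) for parts (2)--(4), and Lemma~3.4.3 for part (5)), whereas you reconstruct the statements from the general mechanics of the additive sheaf theory. Your arguments for (1) (testing $Z(\sigma_u)$ against injective sheaves via $\Hom_{\Sh}(Z(W),F)\cong F(W)$), for (2) (pointwise filtered colimits of sheaves, since the sheaf condition is a finite-limit condition, plus the standard characterisation of compacts in a locally finitely presentable category), for (3) (the adjunction $\bD(\sigma_u\otimes X,W)\cong\bD(\sigma_u,W\otimes X^\ast)$ reducing the sheaf condition on representables to $u\in\cU^{ex}$), and for (5) (composing the factorisation from (4) with exactness of $Z(\sigma_u)$ and the hypothesised left-exact sequence) are all sound and match what the cited lemmas of \cite{PreTop} actually establish. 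What your approach buys is a self-contained account; what it costs is that the entire weight of parts (4) and (5) rests on the ``cover-representation'' of morphisms $Z(X)\to Z(Y)$ (a morphism is realised by an honest morphism of $\bD$ after precomposing with some $v\otimes X$, $v\in\bar{\cV}$, and $Z(f)=0$ iff $f\circ(v\otimes X)=0$ for some such $v$), which you explicitly assume rather than derive. That assumption is precisely the content of \cite[Lemma~4.3.1]{PreTop}, so you have not introduced an error, but you should be aware that this is the one non-routine input: it requires unwinding the (two-step) plus-construction for the linear pretopology generated by the morphisms $u\otimes X$ and checking that composites of such covers stay in $\bar{\cV}$. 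Two small points worth making explicit: in (4) you should note that morphisms in $\bar{\cV}$ are themselves epimorphisms when $\cV\subset\cU^{ep}$ (each $u_1\otimes\cdots\otimes u_n$ factors as a composite of morphisms obtained by tensoring an epimorphism with an object, and tensoring preserves epimorphisms by adjunction); and in (5) you use that the cover $q$ produced by (4) for $X=\unit$ actually lies in $\bar{\cV}$ — this follows from your cover-representation but not from the bare statement of (4), and it is needed to invoke the hypothesis of (5).
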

 \begin{proof}
 Part (1) is \cite[Proposition~4.3.6(ii)]{PreTop}. By \cite[Lemma~4.3.1(ii) and Definition~{3.1.1}]{PreTop} the objects in the image of $Z$ are compact. Furthermore, by \cite[Lemma~4.3.1(iv)]{PreTop}, every object in $\Sh(\bD,\cT_{\cV})$ is a quotient of a direct sum of objects in this image. This implies part (2), see for instance \cite[Lemma~1.3.1(ii)]{PreTop}.
 
 Parts (3) and (4) are contained in \cite[Lemma~4.3.1]{PreTop}.
 
Part (5) is an example of \cite[Lemma~3.4.3]{PreTop}.
 \end{proof}
 
The following proposition is a special case of \cite[Theorem~4.4.1]{PreTop}, by \cite[Lemma~4.1.3]{PreTop}.
\begin{prop}\label{propIndSh}
In the situation of Proposition~\ref{PropGF} we have a monoidal equivalence $\Ind\bT\simeq \Sh(\bD,\cT_{\cU})$ which exchanges the functors $F$ and $Z$.
\end{prop}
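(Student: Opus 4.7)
The plan is to construct an adjoint pair
$$\Phi : \Sh(\bD,\cT_{\cU}) \;\rightleftarrows\; \Ind\bT : \Psi$$
identifying $Z$ with $F$ (via $\bT\hookrightarrow\Ind\bT$), and then verify that it is a monoidal equivalence. First I would define $\Psi(M)(X):=\Hom_{\Ind\bT}(F(X),M)$. To see that each $\Psi(M)$ is a sheaf, note that faithfulness of $F$ forces $F(u)\in\cU(\bT)=\cU^{ex}(\bT)$ (Proposition~\ref{PropTEx}) for every $u\in\cU(\bD)$, so $F(\sigma_u)$ is exact in $\bT$; tensoring with $F(X)$ preserves exactness by Lemma~\ref{LemIndFaith}(2), and applying $\Hom_{\Ind\bT}(-,M)$ yields the required left exactness. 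Since every $F(X)$ is compact in $\Ind\bT$, $\Psi$ preserves limits and filtered colimits and hence admits a left adjoint $\Phi$ by the special adjoint functor theorem; Yoneda identifies $\Phi\circ Z$ with $F$.

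Next I would prove that $\Phi$ is fully faithful. By Lemma~\ref{LemShD}(2) every compact sheaf has a presentation by representables, so cocontinuity of $\Phi$ reduces the question to the bijectivity of $\Hom_{\Sh}(Z(X),Z(Y))\to\Hom_{\Ind\bT}(F(X),F(Y))$ for $X,Y\in\bD$. Injectivity is immediate from faithfulness of $F$ and of $Z$ (Lemma~\ref{LemShD}(4)). Surjectivity is where condition (F) is used: given $a:F(X)\to F(Y)$, pick $q:X'\to X$ with $F(q)$ epi and $a\circ F(q)=F(b)$; since $Z(q)$ is also an epimorphism in $\Sh$ by the lifting property in Lemma~\ref{LemShD}(4), there is a unique $\tilde a:Z(X)\to Z(Y)$ with $\tilde a\circ Z(q)=Z(b)$, and one checks $\Phi(\tilde a)=a$. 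Essential surjectivity follows from (G): every $M\in\bT$ is a quotient $F(X)\tto M$, and applying (F) to the resulting kernel presents $M$ as a cokernel of a morphism in the image of $F$, hence in the essential image of $\Phi$; cocontinuity of $\Phi$ together with the fact that $\bT$ generates $\Ind\bT$ under filtered colimits then covers all of $\Ind\bT$.

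Finally I would upgrade the equivalence to a monoidal one. Both $\Sh(\bD,\cT_{\cU})$ (via the discussion after Lemma~\ref{LemShD}) and $\Ind\bT$ (Subsection~\ref{Biclo}) are biclosed Grothendieck categories whose monoidal structures are essentially uniquely determined by demanding that $Z$, respectively $F$, be strong monoidal with tensor product cocontinuous in each variable; transporting one structure along $\Phi$ satisfies both properties, so $\Phi$ is monoidal by uniqueness. I expect the most delicate step to be the surjectivity of $\Hom_{\Sh}(Z(X),Z(Y))\to\Hom_{\Ind\bT}(F(X),F(Y))$: while (F) supplies the lift in $\bT$, one has to verify that the induced $\tilde a$ is independent of the choices of $q$ and $b$, which amounts to a careful diagram chase in the sheaf topology leveraging Lemma~\ref{LemShD}(4).
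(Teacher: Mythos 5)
Your overall architecture -- the adjunction $\Phi\dashv\Psi$ with $\Psi(M)=\Hom_{\Ind\bT}(F(-),M)$, the Yoneda identification $\Phi\circ Z\simeq F$, essential surjectivity via (G), (F) and right exactness of $\Phi$, and the transport of the monoidal structure by uniqueness of the biclosed structure -- is sound, and your verification that $\Psi$ lands in sheaves is correct. (The paper itself only cites \cite[Theorem~4.4.1]{PreTop} here, so you are reconstructing the argument from scratch.) The genuine gap is in full faithfulness. Full faithfulness of $\Phi$ means the unit $\eta_B\colon B\to\Psi\Phi(B)$ is an isomorphism for \emph{every} sheaf $B$, and knowing this for representable $B=Z(Y)$ does not propagate to all $B$ by writing $B$ as a colimit of representables: $\Phi$ is cocontinuous, but $\Psi$ preserves only filtered colimits and is merely left exact (the objects $F(X)$ are compact, not projective), so $\Psi\Phi$ does not commute with the cokernels occurring in a presentation. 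Sheafification on a subcanonical site is a cocontinuous functor that is bijective on morphisms between representables without being fully faithful, so the reduction ``cocontinuity of $\Phi$ reduces to $\Hom(Z(X),Z(Y))\to\Hom(F(X),F(Y))$'' is false in general. The standard repair is to argue on the right adjoint instead: prove that $\Psi$ is fully faithful -- this is where (F) and (G) really work, by showing a natural transformation $\Psi(M)\to\Psi(N)$ is automatically natural with respect to \emph{all} morphisms $F(X)\to F(Y)$ in $\Ind\bT$, not only those from $\bD$ -- and that every sheaf lies in the essential image of $\Psi$; the unit is then an isomorphism everywhere.

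Even your representable case has two unproved steps. First, that $Z(q)$ is an epimorphism in $\Sh(\bD,\cT_{\cU})$ does not follow from Lemma~\ref{LemShD}(4), which concerns morphisms between objects of the sheaf category, not the implication ``$F(q)$ epi in $\bT$ $\Rightarrow$ $Z(q)$ epi in $\Sh$''; one must pass through the nonzero morphism $\ev_X\circ(q\otimes X^\ast)\in\cU(\bD)$ and a rigidity argument in the style of Lemma~\ref{fMN}. Second, and more seriously, the \emph{existence} (as opposed to uniqueness) of $\tilde a$ with $\tilde a\circ Z(q)=Z(b)$ requires $Z(b)$ to vanish on $\ker Z(q)$; you only know that $F(b)=a\circ F(q)$ vanishes on $\ker F(q)$, and transporting that back presupposes the faithfulness of $\Phi$ you are trying to establish -- the argument is circular. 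The non-circular route is to show directly that the canonical monomorphism of presheaves $\bD(-,Y)\hookrightarrow\Hom_{\bT}(F(-),F(Y))$ is a local isomorphism for $\cT_{\cU}$ (local surjectivity being precisely condition (F), once covers are identified as above), so that $Z(Y)\simeq\Psi(F(Y))$ and the desired $\tilde a$ is simply $\Psi(a)$.
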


\begin{corollary}\label{CorCan}
 For a tensor category $\bT$, the ind-completion $\Ind\bT$ coincides with the category of $k$-linear functors $F:\bT^{\op}\to\Vecc^{\infty}$ for which the sequence $F(\sigma_u\otimes X)$,
$$0\to F(X)\to F( U\otimes X)\to F( U\otimes U\otimes X),$$
is exact, for every $X\in\bT$ and nonzero morphism $U\to\unit$.

\end{corollary}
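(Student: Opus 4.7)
The plan is to deduce this as an immediate application of Proposition~\ref{propIndSh} with $\bD=\bT$ and $F$ the identity functor. First, I would check that the identity functor $\id_\bT:\bT\to\bT$ satisfies the hypotheses of Proposition~\ref{PropGF}: condition (G) is trivial, as every object of $\bT$ is a quotient of itself, and condition (F) is trivial because a fully faithful functor automatically satisfies it (take $X'=X$ and $q=\id_X$). In particular, $\bT$ is strongly generating in itself in the sense of Definition~\ref{DefGen}(2), and Proposition~\ref{propIndSh} (or equivalently Theorem~\ref{MainThmAbEnv} packaged with Proposition~\ref{propIndSh}) yields a monoidal equivalence
$$\Ind\bT\;\simeq\;\Sh(\bT,\cT_{\cU(\bT)})$$
under which $\bT\hookrightarrow\Ind\bT$ corresponds to $Z:\bT\to\Sh(\bT,\cT_{\cU(\bT)})$.

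Next, I would simply unfold the definition of $\Sh(\bT,\cT_{\cU(\bT)})$ from~\ref{SecShD}: it is the category of $k$-linear functors $F:\bT^{\op}\to\Vecc^\infty$ such that $F(\sigma_u\otimes X)$ is exact for every $u\in\cU(\bT)$ (that is, every nonzero $U\to\unit$) and every $X\in\bT$. Since $\sigma_u\otimes X$ is the sequence
$$U\otimes U\otimes X\to U\otimes X\to X\to 0,$$
applying the contravariant functor $F$ produces exactly the sequence
$$0\to F(X)\to F(U\otimes X)\to F(U\otimes U\otimes X)$$
displayed in the statement. (The setup of $\Sh(-,\cT_{\cV})$ as a biclosed monoidal Grothendieck category requires $\cV$ to be permutable, which holds here by Example~\ref{ExVMon}(2), though for the underlying category description this is not strictly needed.)

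There is essentially no obstacle in the argument; the only observation worth recording is that, by Proposition~\ref{PropTEx}, $\cU(\bT)=\cU^{ex}(\bT)$, so in the above description one may equivalently quantify over nonzero morphisms $U\to\unit$ or over morphisms in $\cU^{ex}(\bT)$ without changing the class of functors.
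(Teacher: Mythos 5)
Your proposal is correct and is exactly the intended derivation: the paper states Corollary~\ref{CorCan} as an immediate consequence of Proposition~\ref{propIndSh} applied to $\bD=\bT$ with $F=\id_\bT$ (for which (G) and (F) are trivially satisfied), followed by unwinding the definition of $\Sh(\bT,\cT_{\cU})$ from~\ref{SecShD}. The side remarks about permutability via Example~\ref{ExVMon}(2) and about $\cU(\bT)=\cU^{ex}(\bT)$ are accurate but not needed for the statement.
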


\subsection{Main statements}\label{SecExist}
The results in this section will be proved in Section~\ref{SecPrf}.

We fix a pseudo-tensor category $\bD$ over $k$. We introduce the following convention. Consider a functor $F:\sJ\to\bD$ from a filtered category $\sJ$ and a morphism $f:X\to Y$ in $\bD$. We denote the composition $\sJ\xrightarrow{F}\bD\xrightarrow{-\otimes X}\bD$ by $F\otimes X$. By `$\colim(F)\otimes f$' we denote the morphism $\colim(F\otimes X)\to \colim(F\otimes Y)$ in $\Ind\bD$ induced from the obvious natural transformation $F\otimes X\Rightarrow F\otimes Y$.

\begin{theorem}\label{ThmThm}
The following are equivalent:
\begin{enumerate}
\item $\bD$ is strongly generating in a tensor category (which is thus its abelian envelope).
\item 
\begin{enumerate}
\item We have $\cU=\cU^{ex}$. 
\item The class $\cU$ is permutable. 
\item For every morphism $f:X\to Y$ in $\bD$ there exist nonzero $M_1,M_2\in\Ind\bD$ such that the morphisms $M_1\otimes f$ and $f\otimes M_2$ are split in $\Ind\bD$.
\end{enumerate}
\end{enumerate}
\end{theorem}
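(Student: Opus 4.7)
The plan is to address the two implications separately. The direction $(1)\Rightarrow(2)$ transports structure from $\bT$ down to $\bD$, while $(2)\Rightarrow(1)$ builds the envelope inside $\Sh(\bD,\cT_\cU)$.

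For $(1)\Rightarrow(2)$, condition (a) is immediate from Corollary~\ref{CorEmb}(2). For (b), the class $\cU(\bT)$ is permutable by Example~\ref{ExVMon}(2); given $u:U\to\unit$ in $\bD$ and $A\in\bD$, I would first complete the required square in $\bT$ by pullback, producing $v_\bT:V_\bT\to\unit$, then cover $V_\bT$ by a $\bD$-object via strong generation, so that the composite $v\in\cU(\bD)\subset\overline{\cU(\bD)}$ completes the original square (using Lemma~\ref{LemSur} to see the composite is nonzero). For (c), given $f:X\to Y$ in $\bD$, I would factor $f$ as epi-mono in $\bT$ and apply Lemma~\ref{LemSplit} to the two resulting short exact sequences to obtain $u_\bT:\unit\to U_\bT$ in $\bT$ whose tensoring makes both null-homotopic; using the subobject form of strong generation, pick $V\in\bD$ with a mono $U_\bT\hookrightarrow V$ so that the composite $v:\unit\to V$ lies in $\bD$ by fullness, and take $M_1:=A_v$ from Lemma~\ref{LemAu}, which lives in $\Ind\bD$ and splits $M_1\otimes f$. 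The symmetric construction yields $M_2$.

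For $(2)\Rightarrow(1)$, I set $\cE:=\Sh(\bD,\cT_\cU)$. By (a), (b), Lemma~\ref{LemShD} and \cite[Theorem~3.5.5]{PreTop}, $\cE$ is a biclosed Grothendieck category carrying a fully faithful monoidal embedding $Z:\bD\hookrightarrow\cE$ with $\End_\cE(\unit)=k$ (Lemma~\ref{LemShD}(5) applies because (Ex) supplies its hypothesis). I would take $\bT\subset\cE$ to be the full subcategory of compact objects, which by Lemma~\ref{LemShD}(2) are cokernels of morphisms in $Z(\bD)$. Closure of $\bT$ under cokernels, direct summands and tensor products is immediate; if $\bT$ is moreover closed under kernels, rigidity of compacts follows, since for $C=\coker Z(\alpha)$ one has $C^*=\ker Z(\alpha^*)$ using that $Z$ preserves duals. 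Then $\bT$ becomes an abelian rigid monoidal category with $\End(\unit)=k$, i.e., a tensor category strongly generated by $\bD$, and Proposition~\ref{PropGF} (with condition (F) trivial since $Z$ is full) identifies $\bT$ as the desired abelian envelope.

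The crux, and main obstacle, is kernel closure. Lemma~\ref{LemShD}(4) combined with a snake lemma reduces matters to showing $\ker Z(f)$ is compact for every $f:X\to Y$ in $\bD$. Here condition (c) enters: given $M\in\Ind\bD$ with $M\otimes f$ split, the cocontinuous monoidal extension $\Ind Z:\Ind\bD\to\cE$ makes $Z(M)\otimes Z(f)$ split in $\cE$, and exactness of $Z(M)\otimes-$ (Lemma~\ref{LemNN}, as $Z(M)$ is a filtered colimit of rigids) identifies $Z(M)\otimes\ker Z(f)$ with a direct summand of $Z(M)\otimes Z(X)$ lying in the image of $\Ind Z$. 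The delicate point is descending compactness from $Z(M)\otimes\ker Z(f)$ to $\ker Z(f)$ itself, since $Z(M)$ is neither invertible nor compact and the categorical trace $\unit\to Z(M)^*\otimes Z(M)\to\unit$ need not be an isomorphism. My plan to overcome this is to refine $M$ to an algebra of the form $A_v$ from Lemma~\ref{LemAu}, and combine with a compactness/cofinality argument in $Z(M)=\colim Z(M_i)$ to realise the split section from a morphism in $Z(\bD)$ at a finite stage, thereby expressing $\ker Z(f)$ as a cokernel between $Z$-objects and hence as a compact object. Once kernel compactness is established for $Z(f)$ with $f$ in $\bD$, the snake lemma extends it to all morphisms between compact objects, making $\bT$ abelian and concluding the proof.
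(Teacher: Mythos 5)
Your direction $(1)\Rightarrow(2)$ follows the paper in spirit: (a) is Corollary~\ref{CorEmb}(2), (b) is extracted from $\bT$ via permutability of $\cU(\bT)$, and (c) comes from splitting the epi--mono factorisation of $f$ in $\bT$. One caveat: the paper obtains the $M_i$ from Proposition~\ref{PropInd}(5), i.e.\ as injective objects of $\Ind\bT$ presented as \emph{filtered} colimits of objects of $\bD$ via strong generation. Your choice $M_1:=A_v$ is problematic because $A_v$ is by Lemma~\ref{LemAu}(1) a colimit over the (non-filtered) category of finite ordinals and injections, so it is not clear that $A_v$ lies in $\Ind\bD$ at all; this is exactly why the paper reserves the algebra-of-sections trick for the symmetric characteristic-zero variant (Theorem~\ref{ThmSym}), where $A_v$ can be replaced by the honest filtered colimit $S_v=\varinjlim\Sym^iV$.

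The serious gap is in $(2)\Rightarrow(1)$, at precisely the point you call the crux. Your plan --- descend compactness from $Z(M)\otimes\ker Z(f)$ to $\ker Z(f)$ by realising the section of $Z(M)\otimes Z(f)$ ``at a finite stage'' of $Z(M)=\colim Z(M_i)$ --- does not go through. A section, equivalently an idempotent on the filtered colimit $\colim_i Z(M_i\otimes X)$, need not restrict to compatible idempotents on the terms; and even if it did, you would only exhibit $Z(M)\otimes\ker Z(f)$ as a filtered colimit of objects of $Z(\bD)$, which gives no control over $\ker Z(f)$ itself: compactness does not descend along $Z(M)\otimes-$ for a non-rigid, non-compact $M$, and there is no trace or adjunction available to undo the tensoring.

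The paper's resolution (Proposition~\ref{PropInd}) is the idea your argument is missing, and it never proves compactness of kernels directly. First, Lemma~\ref{Lemmono2} (using (2)(a) and (2)(b)) lets one replace the $Q_i$ by objects for which $Q_i\otimes-$ is faithful (tensor with some $Z({}^\ast X)$ to create a nonzero map $\unit\to Q_1$). Then, for any compact $X=\coker Z(f)$, $Q_1\otimes X$ is a direct summand of $Q_1\otimes X'$ with $X'$ rigid, so $Q_1\otimes X\otimes-$ is exact; faithfulness and exactness of $Q_1\otimes-$ then force $X\otimes-$ itself to be exact. With exactness of the tensor product established, one computes for $C=\coker(f)$ with $f:X\to Y$ rigid that $[C,N]_r\simeq N\otimes\ker(f^\ast)$ naturally in $N$, and Deligne's criterion \cite[Proposition~2.3]{Del90} identifies $\ker(f^\ast)\simeq C^\ast$. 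Hence cokernels of morphisms between rigid objects are rigid, kernels are duals of cokernels and therefore rigid as well, rigid objects are compact because $\unit$ is, and the compact objects form an abelian rigid monoidal category. You would need to import this exactness-first, rigidity-via-internal-homs argument (or an equivalent substitute) to close your proof.
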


\begin{remark}\label{Remc}
Consider the following assumption 
\begin{itemize}
\item[(c')] For every $f:X\to Y$ in $\bD$ there exist nonzero $N_1\in\bD$ such that $N_1\otimes f$ is split.
\end{itemize} 
By duality, it follows that (c') implies that every morphism can also be split by tensoring on the right, in particular \ref{ThmThm}(2)(c) follows. Note that the assumption $\cU=\cU^{ep}$ (or just $\ev_X\in \cU^{ep}$ ) implies that $ X\otimes -$ is faithful for all $X\in \bD$. Using this it is possible to show that (c') together with $\cU=\cU^{ep}$ implies \ref{ThmThm}(2)(b). In conclusion, also condition \ref{ThmThm}(2)(a) and (c') imply that $\bD$ admits an abelian envelope. On the other hand (c') is not a necessary condition, see Lemma~\ref{LemGa}.
\end{remark}

If we work over a field of characteristic zero and $\bD$ is symmetric, we can omit the need to work with the ind-completion in existence criterions.

\begin{theorem}\label{ThmSym}
If $\charr(k)=0$ and $\bD$ is symmetric, the properties in Theorem~\ref{ThmThm} are equivalent with
\begin{enumerate}
\item[(3)] We have $\cU=\cU^{ex}$ and for every morphism $f:X\to Y$ in $\bD$ there exists $U\in\bD$ and morphisms $u:\unit\to U$ and $g:Y\to U\otimes X$ such that $u\otimes f=(U\otimes f)\circ g\circ f$.
\end{enumerate}
\end{theorem}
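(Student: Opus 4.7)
The plan is to leverage Theorem~\ref{ThmThm} and establish that condition (3) is equivalent to condition (2) under the stated hypotheses. Since (2)(a) is identical to the first clause of (3) and (2)(b) is automatic in the symmetric case by Example~\ref{ExVMon}(1), the entire content lies in the equivalence between (2)(c) and the existence, for every morphism $f$, of the data $(U,u,g)$ described in (3).

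For the implication (3) $\Rightarrow$ (2)(c), I would mimic the algebra construction of Lemma~\ref{LemAu} inside $\Ind\bD$. Given $u:\unit\to U$ from (3), let $A:=A_u\in\Ind\bD$ be the colimit of $\unit\xrightarrow{u}U\xrightarrow{U\otimes u}U^{\otimes 2}\to\cdots$. The first task is to check $A\neq 0$: since $\cU=\cU^{ex}\subset\cU^{ep}$, Remark~\ref{Remc} gives faithfulness of $X\otimes -$ for all $X\in\bD$, and dualising $u$ (using that tensoring by rigid objects preserves epimorphisms, as $X\otimes-$ has $X^*\otimes-$ as a left adjoint) yields $u^{\otimes n}\neq 0$ for all $n$; combined with the faithfulness this forces every composite transition $U^{\otimes n}\to U^{\otimes m}$ to be nonzero, so $A\neq 0$. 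The formal analogue of Lemma~\ref{LemAu}(2) then produces an $A$-module retraction $h:A\otimes U\to A$ of $A\otimes u$ (from the identity algebra morphism $A\to A$). I would set $G:=(h\otimes X)\circ(A\otimes g)$, and a short interchange-law calculation together with the identity $u\otimes f=(U\otimes f)\circ g\circ f$ would give $(A\otimes f)\circ G\circ(A\otimes f)=A\otimes f$, showing $A\otimes f$ is split in $\Ind\bD$; the symmetry of $\bD$ supplies the analogous splitting of $f\otimes A$.

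For the implication (2) $\Rightarrow$ (3), I would start with a nonzero $M\in\Ind\bD$ and $h:M\otimes Y\to M\otimes X$ satisfying $(M\otimes f)\circ h\circ(M\otimes f)=M\otimes f$, and write $M=\colim_i N_i$ as a filtered colimit of objects of $\bD$. Selecting $i$ so that $\iota_i:N_i\to M$ is nonzero, compactness of $N_i\otimes Y$ in $\Ind\bD$ forces $h\circ(\iota_i\otimes Y)$ to factor through some $N_{k(i)}\otimes X$; enlarging to an index $K$ at which the two sides of the splitting identity already coincide in $N_K\otimes Y$ yields $\tilde h_i:N_i\otimes Y\to N_K\otimes X$ with transition $\sigma_{i,K}:N_i\to N_K$ satisfying $(N_K\otimes f)\circ\tilde h_i\circ(N_i\otimes f)=\sigma_{i,K}\otimes f$, and $\sigma_{i,K}\neq 0$ because $\iota_i=\iota_K\circ\sigma_{i,K}\neq 0$. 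Applying the rigidity isomorphism $\Hom(N_i\otimes X,N_K\otimes Y)\cong\Hom(X,N_i^*\otimes N_K\otimes Y)$ would transport this equation into $u\otimes f=(U\otimes f)\circ g\circ f$, with $U:=N_i^*\otimes N_K\in\bD$, $u:\unit\to U$ the image of $\sigma_{i,K}$ (hence nonzero), and $g:Y\to U\otimes X$ the image of $\tilde h_i$.

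The principal obstacle is the careful interplay between $\bD$ and $\Ind\bD$: producing a nonzero ind-object from the finite data in the direction (3) $\Rightarrow$ (2), and descending an ind-level splitting to a single stage in the direction (2) $\Rightarrow$ (3). Both difficulties are resolved by the rigidity of objects of $\bD$, together with the faithfulness consequences of $\cU=\cU^{ep}$ packaged in Remark~\ref{Remc}; in the first direction one must additionally verify non-vanishing of the colimit defining $A_u$, which is the most delicate step.
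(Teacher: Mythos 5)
Your reduction to the equivalence of (2)(c) with the splitting datum $(U,u,g)$ is the right frame, and your argument for (2)$\Rightarrow$(3) --- descending the splitting of $M\otimes f$ to a finite stage of a filtered presentation $M=\colim N_i$ by compactness and then transporting along the adjunction $\Hom(N_i\otimes X,N_K\otimes Y)\cong\Hom(X,N_i^\ast\otimes N_K\otimes Y)$ --- is correct, and is a genuine alternative to the paper's route, which instead invokes Corollary~\ref{Corfu} in the ambient tensor category and then embeds the resulting $U'$ into an object of $\bD$.

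The gap is in (3)$\Rightarrow$(2)(c). The object $A_u$ of Lemma~\ref{LemAu} is the colimit over the category of finite ordinals and order-preserving inclusions, which is \emph{not} filtered (the two inclusions $[1]\rightrightarrows[2]$ cannot be coequalised by any injection). Consequently: (i) $A_u$ is not visibly an object of $\Ind\bD$, which is what condition \ref{ThmThm}(2)(c) requires; and (ii) your non-vanishing criterion --- ``every composite transition $U^{\otimes n}\to U^{\otimes m}$ is nonzero, hence the colimit is nonzero'' --- is valid only for filtered colimits; in a non-filtered colimit an element can be killed by relations among its distinct images at higher stages even though no single image vanishes. The paper's own proof of Lemma~\ref{LemAu}(3) circumvents this by splitting $I\otimes u$ for an injective object $I\in\Ind\bT$, but that argument is unavailable here since we do not yet know that $\bD$ embeds in a tensor category. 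If you instead take $A$ to be the sequential colimit along $U^{\otimes n}\otimes u$ (which is filtered, lies in $\Ind\bD$ and is nonzero), it no longer carries an algebra structure, so the retraction $h$ promised by Lemma~\ref{LemAu}(2) is not available. This is exactly where the hypotheses $\charr(k)=0$ and symmetry --- which your argument for this direction never uses --- must enter: the paper replaces $U^{\otimes i}$ by the direct summand $\Sym^iU$ (an object of the idempotent-complete $\bD$, with $\unit\to\Sym^iU$ still nonzero since $u^{\otimes i}$ is symmetric-group invariant), so that all insertion maps coincide after projection and $S_u=\varinjlim\Sym^iU$ becomes a sequential filtered colimit in $\Ind\bD$ carrying a commutative algebra structure; your splitting computation then goes through verbatim with $S_u$ in place of $A$.
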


We also have an analogous result for weak abelian envelopes.

\begin{theorem}\label{TheoremV}
The following are equivalent:
\begin{enumerate}
\item $\bD$ has a faithful monoidal functor to a tensor category over $k$ satisfying (F) and (G) from \ref{PropGF}, which is therefore its weak abelian envelope.
\item 
\begin{enumerate}
\item We have $\cU=\cU^{ep}$ and the sequence in Corollary~\ref{CorEmb}(1) is exact for each $u\in\cU$. 
\item The class $\cU$ is permutable. 
\item For every morphism $f$ in $\bD$ there exist functors $\sJ_i\to \bD$ for $i=1,2$ from filtered categories $\sJ_i$ such that the colimits $Q_i$ of $\sJ_i\to \bD\to\Sh(\bD,\cT_{\cU})$ are nonzero and the morphisms $Q_1\otimes Z(f)$ and $Z(f)\otimes Q_2$ are split.
\end{enumerate}
\end{enumerate}
\end{theorem}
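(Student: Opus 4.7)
The plan is to mirror the proof of Theorem~\ref{ThmThm}, guided by Proposition~\ref{propIndSh}: the ind-completion of any weak abelian envelope of $\bD$ is forced to be $\Sh(\bD,\cT_\cU)$, and I will construct the envelope itself as the full subcategory $\bT\subset\Sh(\bD,\cT_\cU)$ of compact objects.

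For (1)$\Rightarrow$(2), assume $F:\bD\to\bT$ is a faithful monoidal functor to a tensor category satisfying (F) and (G). Part~(a) is Corollary~\ref{CorEmb}(1). For~(b), I start from the permutability of $\cU(\bT)$ supplied by Example~\ref{ExVMon}(2): pulling back in $\bT$ yields an apex $V$ with an epimorphism to $\unit$. I cover $V$ by some $F(W)$ via~(G) and apply~(F) successively, first to the morphism $F(W)\to A\otimes U\otimes A^\ast$ and then to the resulting morphism $F(W_1)\to\unit$, producing $W_2\in\bD$ together with $\bD$-morphisms $w:W_2\to\unit$ and $W_2\to A\otimes U\otimes A^\ast$ for which the completed square commutes after applying $F$; faithfulness of $F$ then makes the square commute in $\bD$, with $w\in\cU(\bD)\subset\overline{\cU(\bD)}$ (nonzero since $F(w)$ is a composition of epimorphisms). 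For~(c), I take $I$ to be a nonzero injective in $\Ind\bT\simeq\Sh(\bD,\cT_\cU)$: because $\ker f$, $\im f$ and $\coker f$ all lie in $\bT$ and are rigid, each of $I\otimes\ker f$, $I\otimes\im f$, $I\otimes\coker f$ is injective in $\Ind\bT$, so both short exact sequences coming from the epi-mono factorisation of $f$ split and $I\otimes Z(f)$ becomes a direct sum of an isomorphism and a zero morphism. Writing $I$ as a filtered colimit of objects in $Z(\bD)$ — the standard presentation of an object in a Grothendieck category by a compact generating family closed under finite coproducts — furnishes $Q_1$; $Q_2$ is obtained by the symmetric argument.

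For the converse (2)$\Rightarrow$(1), I define $\bT\subset\Sh(\bD,\cT_\cU)$ to be the subcategory of compact objects. By~(b) together with Lemma~\ref{LemShD}, $\Sh(\bD,\cT_\cU)$ carries a biclosed monoidal structure making $Z$ monoidal, and Lemma~\ref{LemShD}(2) identifies $\bT$ with the objects presented by $Z(\bD)$, giving~(G) at once. Condition~(a) combined with Lemma~\ref{LemShD}(4)--(5) ensures that $Z$ is faithful, that $\End_\Sh(\unit)=k$, and that the cover-lifting condition~(F) holds. The crux and main obstacle is then to verify $\bT$ is a tensor category, which reduces to showing every compact $C=\coker(Z(f))$ admits a rigid dual in $\Sh$. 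Here~(c) is employed as follows: $Q_1\otimes-$ is exact on $\Sh$ (being a filtered colimit of the exact functors $Z(X_j)\otimes-$, each exact by rigidity of $Z(X_j)$) and faithful (by the biclosed-Grothendieck analogue of Lemma~\ref{LemIndFaith}(2)); the splitting of $Q_1\otimes Z(f)$ realises $Q_1\otimes C$ as a direct summand of $Q_1\otimes Z(Y)$, whose tensor-action is exact as it is the filtered colimit of the exact functors $Z(X_j\otimes Y)\otimes-$, so $(Q_1\otimes C)\otimes-$ is exact, and faithful-exactness of $Q_1\otimes-$ then forces $C\otimes-$ to be exact. Symmetrically, $-\otimes C$ is exact by $Q_2$, and the standard biclosed criterion identifies $[C,-]_l$ with $C^\vee\otimes-$ for a compact $C^\vee$, exhibiting the rigid dual. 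With $\bT$ verified to be a tensor category, Proposition~\ref{PropGF} applied to $Z$ together with (F) and (G) confirms that $(Z,\bT)$ is a weak abelian envelope.
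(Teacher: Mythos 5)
Your overall architecture matches the paper's: both directions funnel through Proposition~\ref{PropInd} applied to $\bD\to\Ind\bT\simeq\Sh(\bD,\cT_\cU)$ resp.\ $Z:\bD\to\Sh(\bD,\cT_\cU)$, and your elaborations of (1)$\Rightarrow$(2)(b) (transferring permutability back along (F) and (G)) and of (1)$\Rightarrow$(2)(c) (splitting via a nonzero injective) are correct. However, there is one genuine gap in (2)$\Rightarrow$(1), at exactly the step you call the crux. You assert that $Q_1\otimes-$ is faithful ``by the biclosed-Grothendieck analogue of Lemma~\ref{LemIndFaith}(2).'' No such analogue exists: the proof of Lemma~\ref{LemIndFaith} uses that $\Hom(\unit,-)$ computes the maximal subobject lying in $\Vecc^\infty\subset\Ind\bT$, which is a feature of ind-completions of tensor categories --- precisely the property of $\Sh(\bD,\cT_\cU)$ you are in the middle of proving. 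Hypothesis (2)(c) only gives $Q_i\neq 0$, which in a general biclosed Grothendieck category does not imply that $Q_i\otimes-$ is faithful on objects; if it did, condition (5) of Proposition~\ref{PropInd} could have been stated with ``$Q_i\neq 0$'' in place of ``faithful on objects.'' This is the one place where hypotheses (2)(a) and (2)(b) must actually be used in the converse direction: the paper first replaces $Q_1$ by $Z({}^\ast X)\otimes M_1$ to manufacture a nonzero morphism $\unit\to Q_1$, and then invokes Lemma~\ref{Lemmono2} (built on permutability and $\cU=\cU^{ep}$ via Lemmas~\ref{fMN} and~\ref{Lemmono}) to show that $N\otimes(\unit\to Q_1)$ is a monomorphism for all $N$, whence faithfulness on objects. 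Without this, your exactness bootstrap ($Q_1\otimes C$ a summand of $Q_1\otimes Z(Y)$, hence $(Q_1\otimes C)\otimes-$ exact, hence $C\otimes-$ exact) does not close.

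A smaller soft spot: in (1)$\Rightarrow$(2)(c) you present the injective $I$ as a filtered colimit of objects of $Z(\bD)$ by appeal to ``the standard presentation'' over a compact generating family closed under finite coproducts. The canonical diagram over such a family is in general neither filtered nor does its colimit recover $I$; the statement you need is specific to injectives and to a family satisfying (F) and (G), and is the content of \cite[Lemmas~4.1.3 and~4.1.6(ii)]{PreTop}, which the paper cites inside Proposition~\ref{PropInd}(1)$\Rightarrow$(5). You should either cite that or supply the lifting argument using injectivity of $I$.
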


\begin{remark}
The tensor category in \ref{ThmThm}(1) or \ref{TheoremV}(1) is the category of compact objects in $\Sh(\bD,\cT_\cU)$, by \cite[Corollary~4.4.4]{PreTop} or the proofs below.
\end{remark}
\begin{example}
It follows that $\bU$ from Example~\ref{ExUni} does not admit an abelian envelope in which it is strongly generating (or more generally a functor as in \ref{TheoremV}(1)), since condition (2)(b) fails by~Example~\ref{ExVMon}(3).
\end{example}

\begin{remark}
It is possible, but tedious, to write versions of Theorems~\ref{ThmThm}, \ref{TheoremV} and \ref{ThmSym} for permutable $\cV\subset\cU^{ex}$ or $\cV\subset\cU^{ep}$, yielding characterisations for when $\Sh(\bD,\cT_{\cV})$ is the ind-completion of a tensor category (and then automatically equal to $\Sh(\bD,\cT_{\cU})$), without a priori knowing $\cU=\cU^{ex}$ or $\cU=\cU^{ep}$. An example of this without ind-objects (see Remark~\ref{Remc}) is given in \cite[Theorem~A]{AbEnv1}.
\end{remark}

\subsection{Some results on biclosed Grothendieck categories}

\begin{lemma}\label{LemComMon}
Consider a biclosed Grothendieck category $\bC$ such that
\begin{enumerate}
\item $\unit\in\bC$ is compact;
\item every object in $\bC$ is a quotient of a coproduct of rigid objects.
\end{enumerate}
Then the compact objects in $\bC$ are precisely the ones which have a presentation by rigid objects. In particular, the subcategory of compact objects in $\bC$ is monoidal, and every object in $\bC$ is a filtered colimit of compact objects.

\end{lemma}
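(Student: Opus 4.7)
The plan is to first show that rigid objects are compact, then that every compact object admits a presentation by rigid objects via a filtered-colimit and retract argument, and finally to deduce the monoidal and filtered-colimit statements.

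First, every rigid $R \in \bC$ is compact: the adjunction $\Hom(R,-) \cong \Hom(\unit, R^\ast \otimes -)$, combined with cocontinuity of $R^\ast \otimes -$ (from biclosedness) and compactness of $\unit$ (hypothesis~(1)), shows that $\Hom(R,-)$ preserves filtered colimits. Consequently, any object with a presentation $R_1 \to R_0 \to X \to 0$ by rigids is compact, being a finite colimit of compacts in the cocomplete category $\bC$. For the converse, let $X$ be compact. By~(2) and compactness of $X$ (factoring $\id_X$ through a finite subsum of the given rigid cover), we obtain an epimorphism $R_0 \twoheadrightarrow X$ from a rigid $R_0$; let $K = \ker(R_0 \to X)$. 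Applying~(2) to $K$ and composing with $K \hookrightarrow R_0$ yields a morphism $\bigoplus_j R_j' \to R_0$ whose image is $K$; writing this image as a filtered colimit of images of finite subsums gives $K = \colim_\alpha K_\alpha$ with each $K_\alpha \subset R_0$ a quotient of a rigid. Exactness of filtered colimits in a Grothendieck category then yields
\[
X \;=\; R_0/K \;=\; \colim_\alpha R_0/K_\alpha,
\]
each $R_0/K_\alpha$ being presented by rigids. Compactness of $X$ means $\id_X$ factors through some $R_0/K_\beta$, exhibiting $X$ as a retract of an object with a rigid presentation.

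Upgrading this retract to a genuine presentation of $X$ by rigids is the main technical step. Using idempotent completeness of $\bC$, write $R_0/K_\beta = X \oplus X'$; the summand $X'$ is compact (as a summand of a compact) and is a quotient of $R_0$ via the other projection. The kernel $L$ of the composite epimorphism $R_0 \twoheadrightarrow R_0/K_\beta \twoheadrightarrow X$ fits into a short exact sequence
\[
0 \to K_\beta \to L \to X' \to 0,
\]
and I would produce a rigid surjection onto $L$ by combining a rigid surjection onto $K_\beta$ (available by construction) with a lift to $L$ of a rigid surjection onto $X'$. This lifting is the main obstacle: rigid objects in $\bC$ need not be projective, so an arbitrary map from a rigid into $X'$ cannot be directly lifted along $L \twoheadrightarrow X'$, and one must exploit the explicit splitting datum for the idempotent on $R_0/K_\beta$ to produce the required lift.

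Assuming this characterisation, the remaining assertions follow. The subcategory of compact objects contains $\unit$ (which is rigid) and is closed under tensor products: cocontinuity of $\otimes$ turns rigid presentations of $X$ and $X'$ into an exact sequence $(R_1 \otimes R_0') \oplus (R_0 \otimes R_1') \to R_0 \otimes R_0' \to X \otimes X' \to 0$ exhibiting $X \otimes X'$ as presented by rigids. For the final claim, every $Y = Q/K$ in $\bC$ (with $Q$ and $K$ coproducts of rigids as in~(2)) is a filtered colimit, over pairs $(\beta,\gamma)$ consisting of a finite subsum $Q_\beta \subset Q$ and a finite subsum of the rigid presentation of $K$ whose image $K_\gamma$ lies inside $Q_\beta$, of the cokernels $Q_\beta/K_\gamma$; each such cokernel has a presentation by rigids and is therefore compact.
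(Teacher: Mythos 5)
Your skeleton is the right one and is essentially what the paper delegates to \cite[Lemma~1.3.1]{PreTop}: rigid objects are compact because $\Hom(R,-)\cong\Hom(\unit,R^\ast\otimes-)$, objects presented by rigids are compact, every compact object is a quotient of a single rigid object, and then a filtered-colimit argument exhibits a compact $X$ as a retract of some $R_0/K_\beta$ that is presented by rigids. The closing arguments (monoidality of the compacts, every object a filtered colimit of compacts) are also fine. But the proof has a genuine, self-acknowledged gap at its central step: you never complete the passage from ``$X$ is a retract of $R_0/K_\beta$'' to ``$X$ has a presentation by rigid objects''. The repair you sketch --- lifting a rigid surjection onto $X'$ along $L\twoheadrightarrow X'$ --- fails for exactly the reason you give (rigid objects need not be projective), and it is not clear that the splitting datum of the idempotent produces the lift. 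As written, the characterisation of compact objects, on which everything else in the lemma rests, is not proved.

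The step can be closed without any lifting. Keep your notation: $q\colon R_0\twoheadrightarrow X$ has kernel $L$ sitting in $0\to K_\beta\to L\to X'\to 0$, where $K_\beta$ is a quotient of a rigid object and $X'$ is compact. Call a subobject \emph{finitely generated} if it is a quotient of a rigid object; by hypothesis (2) and AB5, every object of $\bC$ is the filtered union of its finitely generated subobjects (the images of finite subsums of a rigid cover). Write $L=\bigcup_\alpha L_\alpha$ as such a union. Then $X'\cong L/K_\beta$ is the filtered union of its subobjects $(L_\alpha+K_\beta)/K_\beta$, and compactness of $X'$ forces $(L_{\alpha_0}+K_\beta)/K_\beta=X'$, i.e.\ $L=L_{\alpha_0}+K_\beta$, for some $\alpha_0$. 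Hence $L$ is a quotient of a rigid object $R\oplus R'$ with $R\twoheadrightarrow L_{\alpha_0}$ and $R'\twoheadrightarrow K_\beta$, and $R\oplus R'\to R_0\to X\to 0$ is the desired presentation. (In short: an extension of a finitely generated object by a finitely generated object is finitely generated; no projectivity is needed.) With this inserted, your argument is complete.
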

\begin{proof}
If $\unit$ is compact, it follows by adjunction and cocontinuity of the tensor product that every rigid object is compact. That one can then characterise compact objects as the cokernels of morphisms between rigid objects, follows from the standard properties of compact objects, as in \cite[Lemma~1.3.1]{PreTop}. 

By assumption (and the observation that rigid objects are compact), every object is a colimit of rigid objects, so in particular a colimit of compact objects. Since cokernels of morphism between compact objects are compact, every object is a filtered colimit of compact objects.
\end{proof}

\begin{prop}\label{PropInd}
Consider a $k$-linear biclosed Grothendieck category $\bC$ such that
\begin{enumerate}
\item[(a)] $\unit\in\bC$ is compact and $k\to\bC(\unit,\unit)$ is an isomorphism;
\item[(b)] there is a faithful linear monoidal functor $F:\bD\to\bC$ from a pseudo-tensor category $\bD$ over $k$ such that:
\begin{enumerate}
\item[(G)] Every object in $\bC$ is a quotient of a coproduct of objects $F(X)$ with $X\in\bD$.
\item[(F)] For every morphism $a:F(X)\to F(Y)$ in $\bC$ there exists a morphism $q:X'\to X$ in $\bD$ such that $F(q)$ is an epimorphism and $a\circ F( q)$ is in the image of $F$.
\end{enumerate}
\end{enumerate}
The following are equivalent:
\begin{enumerate} 
\item $\bC$ is monoidally equivalent to the ind-completion of a tensor category.
\item There exists a faithful exact linear monoidal functor $\bC\to\Ind\bT$, where $\bT$ is a tensor category over some field extension $K/k$.
\item For every compact object $X\in\bC$, the endofunctors $X\otimes-$ and $-\otimes X$ are exact.
\item The full monoidal subcategory $\bC^c$ (see Lemma~\ref{LemComMon}) of compact objects in $\bC$ is a tensor category.
\item  For every morphism $f$ in $\bD$ there exist functors $F_i:\sJ_i\to \bD$ for $i=1,2$ from filtered categories $\sJ_i$ such that, with $Q_i:=\colim (F\circ F_i)$, the endofunctors $Q_1\otimes-$ and $-\otimes Q_2$ of $\bC$ are faithful on objects and the morphisms $Q_1\otimes F(f)$ and $F(f)\otimes Q_2$ are split.

\end{enumerate}
If the conditions are satisfied, then $\Ind(\bC^c)\simeq\bC$.
\end{prop}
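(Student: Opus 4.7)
The plan is to establish the cycle of implications $(1)\Rightarrow(2)\Rightarrow(3)\Rightarrow(5)\Rightarrow(4)\Rightarrow(1)$, with the identification $\Ind(\bC^c)\simeq\bC$ falling out of the preparation together with condition~(4). First I would apply Lemma~\ref{LemComMon}: the unit is compact by~(a), and every object is a quotient of a coproduct of rigid objects since $F(\bD)$ consists of rigid objects and (G) holds. This shows that $\bC^c$ is a monoidal subcategory of $\bC$, every object of $\bC$ is a filtered colimit of compacts, and the canonical functor $\Ind(\bC^c)\to \bC$ is always an equivalence of underlying $k$-linear cocomplete categories (this does not yet use any of (1)--(5)).

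The implication $(1)\Rightarrow(2)$ is trivial. For $(2)\Rightarrow(3)$, let $G:\bC\to\Ind\bT$ be a faithful, exact, monoidal functor; Lemma~\ref{LemIndFaith}(2) gives exactness of $G(X)\otimes_{\Ind\bT}-$ for every compact $X\in\bC$, and since $G$ is faithful and exact it reflects exact sequences, so monoidality of $G$ propagates the exactness back to $X\otimes-$ on $\bC$ (and symmetrically to $-\otimes X$).

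For $(3)\Rightarrow(5)$ I would mimic the proof of Proposition~\ref{PropTEx}: given $f:X\to Y$ in $\bD$, choose a nonzero injective $I\in\bC$; the exactness of tensoring with rigid objects together with the standard injective-splitting argument shows that $F(f)$ becomes split upon tensoring with $I$ on either side. By the preparation step, $I$ is a filtered colimit of objects in the image of $F$, yielding the required $\sJ_i\to\bD$. Faithfulness of $Q_i\otimes-$ on objects then follows by a variant of Lemma~\ref{LemIndFaith}(2) (nonzero injectives absorb every nonzero object via internal hom) combined with Lemma~\ref{LemNN}. For $(5)\Rightarrow(4)$, the functors $Q_i\otimes-$ are exact by Lemma~\ref{LemNN} and faithful on objects by hypothesis, hence reflect exactness; the splittings then let one identify the kernel of $F(f)$ as the image of a concrete idempotent on a compact object, and condition~(F) propagates this to arbitrary morphisms between objects of $\bC^c$. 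Consequently $\bC^c$ has kernels, is abelian, and combined with the monoidal structure from the preparation forms a tensor category.

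Finally, $(4)\Rightarrow(1)$ is immediate from the preparation: the underlying equivalence $\Ind(\bC^c)\simeq\bC$ is monoidal because the tensor on $\bC$ is cocontinuous and restricts to that of $\bC^c$, which matches the canonical biclosed monoidal structure on $\Ind(\bC^c)$ extending $\bC^c$. The main obstacle is the step $(5)\Rightarrow(4)$: transporting the hypothesis that tensoring with a specific filtered colimit splits $F(f)$ into the structural conclusion that kernels of arbitrary morphisms between compact objects are themselves compact. This is precisely where condition~(F) from~(b) must be used essentially, in order to reduce maps between arbitrary compacts to maps in the image of~$F$ and thereby to the controlled splitting situation provided by~(5).
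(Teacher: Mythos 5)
Your overall cycle $(1)\Rightarrow(2)\Rightarrow(3)\Rightarrow(5)\Rightarrow(4)\Rightarrow(1)$ differs from the paper's (which runs $(1)\Rightarrow(2)\Rightarrow(3)\Rightarrow(4)\Rightarrow(1)$ and separately $(1)\Rightarrow(5)\Rightarrow(3)$), and the two links you introduce are exactly where the argument breaks down. For $(3)\Rightarrow(5)$: the ``standard injective-splitting argument'' splits $I\otimes F(f)$ only if $I\otimes \im F(f)$ (and $I\otimes\ker F(f)$) are injective, which requires $\im F(f)$ and $\ker F(f)$ to be \emph{rigid}; under (3) alone you do not know this --- it is precisely the content of the paper's $(3)\Rightarrow(4)$. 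Moreover, your claim that ``by the preparation step, $I$ is a filtered colimit of objects in the image of $F$'' does not follow from Lemma~\ref{LemComMon}, which only exhibits $I$ as a filtered colimit of \emph{compact} objects (cokernels of maps between image objects). Writing an injective as a filtered colimit of objects $F(X)$ is nontrivial; the paper invokes \cite[Lemma~4.1.3 and Lemma~4.1.6(ii)]{PreTop}, and this uses the identification $\bC\simeq\Ind\bT$, i.e.\ the full strength of (1).

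For $(5)\Rightarrow(4)$: the splitting gives an idempotent on $Q_1\otimes F(X)$, which is \emph{not} a compact object ($Q_1$ is a filtered colimit), so you cannot ``identify the kernel of $F(f)$ as the image of a concrete idempotent on a compact object''; you only learn that $Q_1\otimes\ker F(f)$ is a summand of $Q_1\otimes F(X)$, and faithfulness on objects of $Q_1\otimes-$ does not let you descend summands. Even granting that kernels and cokernels of maps between rigid objects are compact, the proposal never shows they are \emph{rigid}, which is what (4) actually requires and is the genuine core of the proof. The paper handles this in $(3)\Rightarrow(4)$ by computing $[Z,N]_r\simeq N\otimes D$ for $Z=\coker(f)$ and $D=\ker(f^\ast)$ and invoking \cite[Proposition~2.3]{Del90} to recognise $D$ as a dual; no substitute for this step appears in your outline. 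The safer repair is to keep your $(1)\Rightarrow(2)\Rightarrow(3)$ and $(4)\Rightarrow(1)$, prove $(3)\Rightarrow(4)$ by the duality computation, and handle (5) via $(1)\Rightarrow(5)$ (where injectivity and rigidity are available) and $(5)\Rightarrow(3)$ (where one only needs that $Q_1\otimes X$ is a summand of some $Q_1\otimes X'$ with $X'$ rigid, and then uses exactness and faithfulness of $Q_1\otimes-$).
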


\begin{proof}
Clearly, (1) implies (2) via the identity functor. That (2) implies (3) is clear, see Lemma~\ref{LemIndFaith}(2).

We show that (3) implies (4). It suffices to show that every kernel and cokernel of a morphism between rigid objects is again rigid. Indeed, by Lemma~\ref{LemComMon} the statement concerning cokernels shows that compact and rigid objects coincide. The statement about kernels further shows that the latter category is abelian. As we will show that the left and right dual of the cokernel of $f:X\to Y$ for rigid $X,Y\in\bC$ are in fact the kernel of $Y^\ast\to X^\ast$ and ${}^\ast Y\to{}^\ast X$, the statement about kernels will follow from our proof about cokernels. Since the proof of the left and right dual are analogous, we only look at the left dual.

By Lemmata~\ref{LemComMon} and~\ref{LemNN}, the tensor product on $\bC$ is exact. We set $Z:=\coker(f)$ and $D:=\ker(f^\ast)\simeq [Z,\unit]_r$ and take arbitrary $M,N\in\bC$. Starting from the exact sequence $X\to Y\to Z\to 0$ we find an exact sequence 
$$0\to \bC(M,[Z,N]_r)\to\bC(M,N\otimes Y^\ast )\xrightarrow{( N\otimes f^\ast)\circ-} \bC(M, N\otimes X^\ast).$$
Hence 
$$0\to [Z,N]_r\to N\otimes Y^\ast \xrightarrow{N\otimes f^\ast} N\otimes X^\ast $$
is exact.
Since we already know that $N\otimes -$ is (left) exact, it follows that the canonical morphism
$$N\otimes D\;\to\;[Z,N]_r$$
is an isomorphism, for each $N\in \bC$. It follows from \cite[Proposition~2.3]{Del90} that $D\simeq Z^\ast$.

That (4) implies (1) is a direct consequence of the standard fact that any locally finitely presentable Grothendieck category for which the subcategory of compact objects is abelian is the ind-completion of the latter subcategory, see~\cite[Proposition 2]{Ro} for instance.

It now remains to prove that (5) is equivalent to the other properties. 

We prove that (1) implies (5). Assume that $\bC\simeq\Ind\bT$ for some tensor category $\bT$ as in (1). Since all rigid objects in $\Ind\bT$ are compact (\cite[Lemma~1.3.7]{AbEnv1}), it follows that $F:\bD\to\Ind\bT$ takes values in $\bT$.
 It is therefore proved in \cite[Lemma~4.1.3 and Lemma~4.1.6(ii)]{PreTop} that every injective object $I\in\Ind\bT$ is a filtered colimit as in (5). That $I\otimes-$ is faithful follows from Lemma~\ref{LemIndFaith}(2). We prove the splitting property in (5). Since $\bT$ is an abelian subcategory, the kernel and image of $F(f)$ are in $\bT$ and hence rigid. By adjunction $I\otimes V$ is injective for every rigid object $V$. It then follows easily that $I\otimes F(f)$ is split. The argument for $-\otimes I$ is identical.

Finally, we prove that (5) implies (3).
Take $Q_1, Q_2$ as in (v). By Lemma~\ref{LemNN}, $Q_1\otimes-$ and $-\otimes Q_2$ are exact endofunctors of $\bC$. By assumption that they be faithful on objects, they are thus faithful. 
Now take an arbitrary compact object $X$ in $\bC$. By combination of assumptions (F) and (G), it is of the form $\coker F(f)$ for some morphism $f$ in $\bD$. For $Q_1\in\bC$ as in (v), $Q_1\otimes X $ is therefore a direct summand of an object $Q_1\otimes X'$ with $X'$ rigid. As $Q_1\otimes-$ is exact, it follows that $Q_1\otimes X\otimes-$  is exact. Since $Q_1\otimes-$ is also faithful, $X\otimes-$ is exact. The same reasoning leads to the corresponding conclusion on $-\otimes X$.
\end{proof}

\begin{lemma}\label{fMN}
For rigid $M,N$ in a biclosed Grothendieck category $\bC$ and $f\in\bC(M, N)$, the following are equivalent:
\begin{enumerate}
\item $f:M\to N$ is an epimorphism;
\item $(  Z\otimes f^\ast):\,    Z\otimes N^\ast\to    Z\otimes M^\ast$ is a monomorphism for every $Z\in\bC$.
\end{enumerate}
\end{lemma}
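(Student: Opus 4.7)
The plan is to translate condition (2) via the tensor-hom adjunction into a statement about preservation of epimorphisms under tensoring. Since $M$ and $N$ are rigid, the right adjoint of $-\otimes M$ is $[M,-]_r = -\otimes M^*$ (with unit and counit induced by $\co_M$ and $\ev_M$), so for every $A, Z \in \bC$ there are natural isomorphisms
\[
\Hom(A \otimes M, Z) \;\cong\; \Hom(A, Z \otimes M^*), \qquad \Hom(A \otimes N, Z) \;\cong\; \Hom(A, Z \otimes N^*),
\]
and naturality in the dualised variable identifies the square whose two legs are precomposition with $A \otimes f$ and postcomposition with $Z \otimes f^*$.

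Using this identification together with the Yoneda lemma, for a fixed $Z$ the morphism $Z \otimes f^*: Z \otimes N^* \to Z \otimes M^*$ is a monomorphism if and only if for every $A \in \bC$ the map
\[
\Hom(A \otimes N, Z) \;\xrightarrow{-\circ (A\otimes f)}\; \Hom(A \otimes M, Z)
\]
is injective. Quantifying over $Z$, condition (2) becomes the statement that $A \otimes f: A \otimes M \to A \otimes N$ is an epimorphism for every $A \in \bC$.

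From here the equivalence with (1) is immediate. Taking $A = \unit$ gives (2) $\Rightarrow$ (1). Conversely, suppose $f$ is an epimorphism. As $\bC$ is a biclosed Grothendieck category, the functor $A \otimes -$ is cocontinuous (it is a left adjoint), so it preserves cokernels and sends $0$ to $0$. Hence $\coker(A \otimes f) \cong A \otimes \coker f = 0$, which shows that $A \otimes f$ is again epi and proves (1) $\Rightarrow$ (2).

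The only slightly delicate point is verifying the naturality of the hom isomorphism with respect to $f$, so that postcomposition with $Z \otimes f^*$ really does correspond to precomposition with $A \otimes f$; but this is a routine diagram chase using the coevaluation and evaluation for the dualities of $M$ and $N$, together with the standard definition of $f^*$ as the transpose of $f$.
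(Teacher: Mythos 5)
Your proof is correct and follows essentially the same route as the paper: both translate condition (2) through the adjunction $\Hom(A\otimes M,Z)\cong\Hom(A,Z\otimes M^\ast)$ into the statement that tensoring $f$ with an arbitrary object yields an epimorphism, and then close the loop using right exactness (cocontinuity) of the tensor product. The naturality check you defer at the end is exactly the routine mate computation the paper also leaves implicit.
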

\begin{proof}
Property (2) means that composition $(  Z\otimes f^\ast)\circ-$ yields a monomorphism
$$\bC(W, Z\otimes N^\ast)\;\hookrightarrow\; \bC(W,Z\otimes M^\ast),\quad\mbox{for every $W,Z\in\bC$.}$$
We can use adjunction to rewrite this as a monomorphism
$$\bC(N\otimes W, Z)\;\hookrightarrow\; \bC( M\otimes W, Z),\quad\mbox{for every $W,Z\in\bC$,}$$
which states that $ f\otimes W$ is an epimorphism for every $W\in\bC$. Since the tensor product is right exact, the latter is equivalent to (1).
\end{proof}

\subsection{Proofs}\label{SecPrf}

%of the form $u=u_1\otimes u_2\otimes \cdots\otimes u_n$ for $n\in\mN$ and $u_i\in\cV$. 

\begin{lemma}\label{Lemmono}
For a permutable $\cV\subset\cU^{ep}$,
consider $v: V\to\unit$ in $\bar{\cV}$ and $N\in \Sh(\bD,\cT_\cV)$. Then $ N\otimes Z(v^\ast):N\to N\otimes Z^\ast$ is a monomorphism in $\Sh(\bD,\cT_\cV)$.
\end{lemma}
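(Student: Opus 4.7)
The plan is to apply Lemma~\ref{fMN} inside the biclosed Grothendieck category $\bC = \Sh(\bD,\cT_\cV)$, which converts the desired monomorphism statement into an epimorphism statement about $Z(v)$. Since $\cV$ is permutable, $Z:\bD\to\Sh(\bD,\cT_\cV)$ carries a monoidal structure (see~\ref{SecShD}), so it preserves duals. In particular $Z(V)$ is rigid in $\Sh(\bD,\cT_\cV)$, its dual is canonically identified with $Z(V^\ast)$, and under this identification the morphism $Z(v)^\ast:\unit\to Z(V)^\ast$ coincides with $Z(v^\ast)$. Thus the map $N\otimes Z(v^\ast):N\to N\otimes Z(V)^\ast$ appearing in the statement is exactly the map $N\otimes Z(v)^\ast$ to which Lemma~\ref{fMN} applies (with $M=Z(V)$, $N=\unit$), so the task reduces to checking that $Z(v)$ is an epimorphism in $\Sh(\bD,\cT_\cV)$.

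To see this, I would use that by definition of $\bar{\cV}$ we can write $v=u_1\otimes\cdots\otimes u_n$ with each $u_i\in\cV$. Since $Z$ is monoidal, $Z(v)$ is canonically identified with the composite of coherence isomorphisms and the tensor product $Z(u_1)\otimes\cdots\otimes Z(u_n)$. By Lemma~\ref{LemShD}(1), for each $i$ the sequence $Z(\sigma_{u_i})$ is exact in $\Sh(\bD,\cT_\cV)$, and in particular $Z(u_i):Z(U_i)\to\unit$ is an epimorphism. As $\Sh(\bD,\cT_\cV)$ is biclosed Grothendieck, its tensor product is cocontinuous in each variable, hence preserves epimorphisms; an iterated application of this observation shows that $Z(v)$, being a tensor product of epimorphisms, is an epimorphism. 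Combined with the reduction in the previous paragraph, this proves the lemma.

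I do not anticipate a genuine obstacle here: the argument is essentially a bookkeeping exercise in the monoidal structure of $Z$ together with two already-proved facts (Lemma~\ref{LemShD}(1) and Lemma~\ref{fMN}). The only point that warrants care is the identification of $Z(v)^\ast$ with $Z(v^\ast)$ and of $Z(V)^\ast$ with $Z(V^\ast)$, but both are formal consequences of the monoidal structure on $Z$ and the fact that duals in a monoidal category are characterised by (co)evaluations, which monoidal functors preserve.
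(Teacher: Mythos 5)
Your proposal is correct and follows essentially the same route as the paper: reduce via Lemma~\ref{fMN} to showing $Z(v)$ is an epimorphism, then deduce this from the fact that each $Z(u_i)$ is an epimorphism (Lemma~\ref{LemShD}) together with right exactness of the tensor product. Your extra care about identifying $Z(v)^\ast$ with $Z(v^\ast)$ via the monoidal structure of $Z$ is a detail the paper leaves implicit, but it does not change the argument.
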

\begin{proof}
By Lemma~\ref{fMN}, it is sufficient to show that $Z$ maps $v$ to an epimorphism in $\Sh(\bD,\cT_{\cV})$. Since $Z(u)$ with $u\in\cV$ is an epimorphism, see Lemma~\ref{LemShD}, the same is true for $v\in\bar{\cV}$.
\end{proof}

\begin{lemma}\label{Lemmono2}
Assume $\cU=\cU^{ep}$ and $\cU$ is permutable. Consider a nonzero morphism $a:\unit\to M$ in $\Sh(\bD,\cT_{\cU})$ with $M$ a filtered colimit of some $\sJ\to \bD\to\Sh(\bD,\cT_{\cU})$.
For every $N\in\Sh(\bD,\cT_{\cU})$, we have that $N\otimes a$ is a monomorphism.
\end{lemma}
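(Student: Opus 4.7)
The plan is to first prove the statement for $M=Z(X_0)$ with $X_0\in\bD$, and then extend to arbitrary filtered colimits via exactness of filtered colimits in the Grothendieck category $\Sh(\bD,\cT_\cU)$.

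For the special case, assume $a_0:\unit\to Z(X_0)$ is nonzero. Since $\unit$ and $Z(X_0)$ are rigid and $Z$ preserves duals, the left dual ${}^\ast a_0:Z({}^\ast X_0)\to\unit$ is a nonzero morphism in $\Sh(\bD,\cT_\cU)$. I apply Lemma~\ref{LemShD}(4) to ${}^\ast a_0$, producing $q:Y\to{}^\ast X_0$ in $\bD$ such that $Z(q)$ is an epimorphism and ${}^\ast a_0\circ Z(q)=Z(\zeta)$ for some $\zeta:Y\to\unit$ in $\bD$; since $Z(q)$ is epi and ${}^\ast a_0\ne 0$, we must have $\zeta\ne 0$, so $\zeta\in\cU\subseteq\bar\cU$. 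Dualising via $(-)^\ast$ and using the canonical identification $({}^\ast X)^\ast\simeq X$ on rigid objects, this yields $Z(q^\ast)\circ a_0=Z(\zeta^\ast)$ as maps $\unit\to Z(Y^\ast)$. Lemma~\ref{Lemmono} applied to $\zeta\in\bar\cU$ shows that $N\otimes Z(\zeta^\ast)$ is a monomorphism, and since it factors as $(N\otimes Z(q^\ast))\circ(N\otimes a_0)$, its factor $N\otimes a_0$ is a monomorphism as well.

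For the general case, write $M=\colim_{j\in\sJ}Z(F(j))$. Since $\unit=Z(\unit)$ is compact in $\Sh(\bD,\cT_\cU)$ by Lemma~\ref{LemShD}(2), the morphism $a$ factors as $a=\iota_{j_0}\circ a_{j_0}$ for some $j_0\in\sJ$ and some $a_{j_0}:\unit\to Z(F(j_0))$, where $\iota_{j_0}$ is the structural map into the colimit. For each $j\ge j_0$ in the cofinal filtered subcategory $\sJ_{\ge j_0}\subseteq\sJ$, set $a_j:=\varphi_{j_0,j}\circ a_{j_0}$, where $\varphi_{j_0,j}$ denotes the transition map of $F$. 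The assumption $a\ne 0$ together with $a=\iota_j\circ a_j$ forces $a_j\ne 0$ for every such $j$, so by the special case each $N\otimes a_j$ is a monomorphism. The morphisms $a_j$ assemble into a natural transformation from the constant diagram on $\unit$ over $\sJ_{\ge j_0}$ to the diagram $j\mapsto Z(F(j))$, whose colimit is precisely $a:\unit\to M$. Since $N\otimes-$ is cocontinuous, $N\otimes a$ is the filtered colimit of the monomorphisms $N\otimes a_j$, hence itself a monomorphism by axiom AB5 of the Grothendieck category $\Sh(\bD,\cT_\cU)$.

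The main obstacle is that the structural map $\iota_{j_0}:Z(F(j_0))\to M$ into the filtered colimit need not be a monomorphism, even after tensoring with $N$, so merely lifting $a$ along compactness of $\unit$ is insufficient; instead one has to treat every stage $j\ge j_0$ of the diagram uniformly and invoke exactness of filtered colimits in a Grothendieck category.
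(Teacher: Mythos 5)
Your proof is correct and follows essentially the same route as the paper: the special case $M=Z(X_0)$ is handled by applying Lemma~\ref{LemShD}(4) (in dual form) to produce a nonzero $\zeta:Y\to\unit$ with $Z(q^\ast)\circ a_0=Z(\zeta^\ast)$ and then invoking Lemma~\ref{Lemmono}, and the general case follows since filtered colimits of monomorphisms are monomorphisms. You merely make explicit two steps the paper leaves implicit, namely the dualisation needed to use Lemma~\ref{LemShD}(4) and the reindexing over (the under-category at) $j_0$ that exhibits $N\otimes a$ as a filtered colimit of the monomorphisms $N\otimes a_j$.
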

\begin{proof}
Assume first that $M=Z(X)$ for $X\in\bD$. By Lemma~\ref{LemShD}(4) there exists a morphism $b:X\to Y$ in $\bD$ such that $Z(b)$ is a monomorphism and $Z(b)\circ a=Z(c)$ for some (by construction non-zero) $c:\unit\to Y$. Now $N\otimes Z(c)$ is a monomorphism by Lemma~\ref{Lemmono}, so also $N\otimes a$ is a monomorphism.
Since filtered colimits of monomorphisms are monomorphisms, we can derive the general claim from the above special case.
\end{proof}

\begin{proof}[Proof of Theorem~\ref{TheoremV}]
To prove that (1) implies (2), consider a faithful monoidal functor $u:\bD\to\bT$ to a tensor category $\bT$ over $k$, satisfying (F) and (G). Condition (a) follows from Corollary~\ref{CorEmb}(1). Condition (b) follows easily from Example~\ref{ExVMon}(2). Condition (c) follows from Proposition~\ref{PropInd} applied to the functor $\bD\to \Ind\bT\simeq \Sh(\bD,\cT_{\cU})$ (see~Proposition~\ref{propIndSh}).

Now we prove that (2) implies (1). By Lemma~\ref{LemShD}, it suffices to prove that $\Sh(\bD,\cT_{\cU})$ is the ind-completion of a tensor category over $k$. 
Applying Proposition~\ref{PropInd} to $Z:\bD\to \Sh(\bD,\cT_{\cU})$, it suffices to show we can take the $Q_i$ in (2)(c) to be such that $Q_1\otimes-$ and $-\otimes Q_2$ are faithful on objects. By replacing $Q_1$ by some $Z({}^\ast X)\otimes M_1$ for an appropriate $X\in\bD$ we can assume, without loss of generality, that there is a nonzero morphism $\unit\to Q_1$. That $Q_1\otimes-$ is faithful on objects now follows from Lemma~\ref{Lemmono2}, and we can deal with $-\otimes Q_2$ similarly.
\end{proof}

\begin{proof}[Proof of Theorem~\ref{ThmThm}]
That (1) implies (2)(a) follows from Corollary~\ref{CorEmb}(2). That (1) implies (2)(b) is a special case of the result in~Theorem~\ref{TheoremV}.
To prove condition (2)(c), consider a fully faithful embedding $u:\bD\to\bT$ such that $\bD$ is strongly generating in the tensor category~$\bT$. We need to show that for every morphism $f:X\to Y$ in $\bD$, there exist nonzero $M_1,M_2\in\Ind\bD$ such that the morphisms $M_1\otimes f$ and $f\otimes M_2$ are split in~$\Ind\bD$.
Considering the functor $\bD\to\Ind\bT$, we find 
functors $F_i:\sJ_i\to\bD$ and $Q_i$ as in~Proposition~\ref{PropInd}(5). Set $M_i:=\colim F_i\in\Ind\bD$, so that $(\Ind\,u)(M_i)=Q_i$. Since $\Ind\,u$ is fully faithful, the morphism $Q_1\otimes u(Y)\to Q_1\otimes u(X)$ which splits $Q_1\otimes u(f)$ in $\Ind\bT$ yields a morphism $M_1\otimes Y\to M_1\otimes X$ which splits $M_1\otimes f$ in $\Ind\bD$. The same reasoning works for $f\otimes M_2$.

%we take an injective object $I\in\Ind\bT$. By \cite[]{PreTop} there is a functor $F:\sJ\to\bD$ from a filtered category $\sJ$ with $I\simeq \colim( u\circ F)$. We set $M:\colim F\in\Ind\bD$. The fact that $I\otimes f$ and $f\otimes I$ split in $\Ind\bT$ shows that $M\otimes u(f)$ and $u(f)\otimes M$ split in $\Ind\bD$.

By Theorem~\ref{TheoremV} and~Lemma~\ref{LemShD}(3), to show (2) implies (1) we only need to argue that the condition in (2)(c) implies the corresponding condition in \ref{TheoremV}(2)(c). This follows from faithfulness of $Z$ and the fact that $Z$ sends compact objects to compact objects, from which we can derive that the extension of $Z$ to $\Ind\bD$ does not send non-zero objects to zero.
\end{proof}

\begin{proof}[Proof of Theorem~\ref{ThmSym}]
To show \ref{ThmThm}(1) implies (3), assume $\bD$ is strongly generating in a  tensor category $\bT$. For a given $f$, by Corollary \ref{Corfu}, there exists a morphism $u':\unit\to U'$ in $\bT$ and a morphism $g':Y\to U'\otimes X$ in $\bT$ such that $u'\otimes f=(U'\otimes f)\circ g'\circ f$. Let $U$ be an object in $\bD$ that has $U'$ as a subobject. Then (3) holds for the morphisms $u:\unit\xrightarrow{u'} U'\subset U$ and $Y\xrightarrow{g'} U'\otimes X\subset U\otimes X$. 

Now we prove that (3) implies \ref{ThmThm}(2).
Fix $f,\,u$ and $g$ as in (3). Firstly, we observe that, since $\cU=\cU^{ex}$ (so in particular $\cU=\cU^{ep}$), the morphisms $u^{\otimes i}:\unit\to U^{\otimes i}$ are not zero (they are monomorphisms). Since $\mathrm{char}(k)=0$, these morphisms actually factor through $\unit\to \Sym^i U$, so the latter are non-zero too. Next consider the directed system 
$$\unit\xrightarrow{u} U\to \cdots\to\Sym^{i-1}U\to \Sym^iU\to \cdots,$$
where each arrow is a composition of $u\otimes\Sym^{i-1}U $ with projection onto the direct summand $\Sym^iU$. We can consider the filtered colimit $S_u:=\varinjlim_{i\in\mN} \Sym^i U$, which is non-zero since the canonical $\unit\to S_u$ is a monomorphism by the observations from the start of this paragraph.

It suffices to show that $S_u\otimes f$ is split in $\Ind\bD$, or equivalently in $\Sh(\bD,\cT_{\cU})$. We will prove a slightly stronger statement. By \cite[Example~7.12]{Del90}, $S_u$ has the structure of a commutative algebra in $\Sh(\bD,\cT_{\cU})$, with unit $\unit\to S_u$ induced from the above directed system. Let $h:S_u\otimes Y\to S_u\otimes X$ be the morphism of $S_u$-modules in $\Sh(\bD,\cT_{\cU})$ corresponding to the image of $g$ under
$$\bD(Y,U\otimes X)\;\to\;\varinjlim_{i\in\mN}\bD(Y, \Sym^iU\otimes X)\;\simeq\;\Hom_{S_u}(S_u\otimes Y,S_u\otimes X).$$
Composing the assumed identity between $f$ and $g$ with $(U\to S_u)\otimes Y$ shows that $(S_u\otimes f)=(S_u\otimes f)\circ h\circ (S_u\otimes f)$. In particular, $S_u\otimes f$ is split, even as an $S_u$-module morphism.  
\end{proof}

%We conclude the subsection by making the connection with abelian envelopes for symmetric pseudo-tensor categories.

 %%%%%%%%%%%%%%%%%%%%%%%%%%%%%%%%%%%%%%%%%%%%%%%%%%%%%%%%%%%%%%%%%%%%%%%%%%%%%%%%%%%%%%%%%%%%%%%%%%%%%%%%%%%%%%%%%%%%

\section{Tannakian envelopes}\label{SecTann}

We will assume $k$ is algebraically closed throughout this section. 
Every representation of an affine group scheme is assumed to be finite dimensional and algebraic.

\subsection{Observable and epimorphic subgroups}

In this subsection, we do not claim originality, but derive some consequences of the results in~\cite{BBHM} in the form needed for the following subsection. Most of the statements can also be extracted from~\cite{Gr} and we use the same terminology as {\it loc. cit.}

\subsubsection{}\label{DefQuo}
By `algebraic group' we mean an affine group scheme of finite type (with finitely generated coordinate algebra).
Recall that for a closed subgroup $G_2$ of an algebraic group $G_1$, the quotient scheme $G_1/G_2$ exists and is of finite type, see \cite{DG}. From the quotient property we have the isomorphism
$$k[G_1]^{G_2}\simeq \Hom(G_1,\mA^1)^{G_2}\simeq \Gamma(G_1/G_2,\cO_{G_1/G_2}).$$

\begin{theorem}[Byalinicki-Birula, Hochschild and Mostow]\label{BBHM}
	The following are equivalent conditions on a closed subgroup $G_2$ of an algebraic group $G_1$.
	\begin{enumerate}
		\item The scheme $G_1/G_2$ is quasi-affine.
		\item Any finite dimensional representation of $G_2$ embeds $G_2$-equivariantly into a $G_1$-representation.
		\item The group $G_2$ is the stabiliser of a vector in a $G_1$-representation.
	\end{enumerate}
	Such subgroups $G_2<G_1$ are called {\bf observable}.
\end{theorem}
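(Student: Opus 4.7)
The plan is to establish the three equivalences via the cycle $(3) \Rightarrow (1) \Rightarrow (3)$ together with $(3) \Leftrightarrow (2)$. The geometric pair $(1) \Leftrightarrow (3)$ is relatively direct, while $(3) \Leftrightarrow (2)$ carries the main representation-theoretic content, with $(3) \Rightarrow (2)$ being the principal obstacle.

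For $(3) \Rightarrow (1)$: given $v \in V$ with $G_2 = \mathrm{Stab}_{G_1}(v)$, the orbit map $G_1 \to V$, $g \mapsto g v$, descends to an isomorphism of $G_1/G_2$ onto the scheme-theoretic orbit $G_1 \cdot v$. Invoking the standard fact that orbits of algebraic groups acting on algebraic varieties are locally closed subschemes, $G_1 \cdot v$ is open in its affine closure in $V$, and hence $G_1/G_2$ is quasi-affine. Conversely, for $(1) \Rightarrow (3)$: starting from $k[G_1/G_2] = k[G_1]^{G_2}$ as in~\ref{DefQuo}, quasi-affineness supplies finitely many $f_1, \ldots, f_n \in k[G_1]^{G_2}$ whose values separate $e G_2$ from every other coset. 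Let $V \subset k[G_1]$ be the $G_1$-submodule under left translation generated by these functions; it is finite-dimensional by local finiteness of $k[G_1]$, and evaluation at $e$ defines $\phi \in V^*$ whose $G_1$-stabiliser is exactly $G_2$.

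The main step is $(3) \Rightarrow (2)$. Given a $G_2$-representation $W$, the natural candidate is the algebraically induced $G_1$-module $M := (k[G_1] \otimes W)^{G_2}$, where $G_2$ acts diagonally (right translation on $k[G_1]$, given action on $W$) and $G_1$ acts via left translation on $k[G_1]$. The ``evaluation at $e$'' map $M \to W$ is automatically $G_2$-equivariant, and the key technical claim is that under the quasi-affine hypothesis this evaluation map is surjective: one produces $G_2$-equivariant regular maps $G_1 \to W$ extending prescribed values at $e$ by exploiting the richness of global sections of $G_1/G_2$ guaranteed by (1). Since $M$ is locally finite as a $G_1$-module, a finite-dimensional $G_1$-submodule already surjects onto $W$ as $G_2$-module; applying this construction to $W^*$ in place of $W$ and dualising then exhibits $W$ as a $G_2$-submodule of a finite-dimensional $G_1$-representation.

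Finally, for $(2) \Rightarrow (3)$: Chevalley's theorem provides a $G_1$-representation $V_0$ and a line $L \subset V_0$ with $\mathrm{Stab}_{G_1}(L) = G_2$, on which $G_2$ acts by a character $\chi$. Applying $(2)$ to the one-dimensional $G_2$-module $L^*$ yields an embedding $L^* \hookrightarrow U$ into a $G_1$-representation, so that $G_2$ fixes the vector $\ell \otimes \ell^* \in V_0 \otimes U$ for compatibly chosen $\ell \in L$ and $\ell^* \in L^*$; a direct analysis identifies its full $G_1$-stabiliser with $G_2$, closing the cycle. The principal obstacle throughout is the surjectivity of the evaluation map $M \to W$ in $(3) \Rightarrow (2)$, where the quasi-affine structure enters essentially.
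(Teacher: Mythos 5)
The paper does not actually prove this theorem: it cites \cite{BBHM} (Theorem~4 for the equivalence of (1) and (2), Theorem~8 for that of (1) and (3)) and merely remarks that the extension from smooth groups to general algebraic group schemes will appear in future work. Your proposal therefore attempts to supply an argument the paper deliberately outsources. The skeleton you choose is the standard one, and the easier legs are essentially right: $(3)\Rightarrow(1)$ via local closedness of orbits; $(1)\Rightarrow(3)$ via right-$G_2$-invariant regular functions pulled back from an affine embedding of $G_1/G_2$ (though ``separating $eG_2$ from every other coset'' is not quite enough --- to identify the scheme-theoretic stabiliser of $\mathrm{ev}_e$ with $G_2$ you need the chosen functions to induce an immersion of $G_1/G_2$ into affine space, a point that matters precisely for the non-smooth groups this paper needs); and $(2)\Rightarrow(3)$ via Chevalley's theorem and the tensor trick $\ell\otimes\ell^\ast$, which is correct since a nonzero pure tensor determines its factors up to reciprocal scalars.

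The genuine gap is in $(3)\Rightarrow(2)$, which you correctly flag as the principal obstacle but then do not close. The reduction is fine: surjectivity of the evaluation map $\ind_{G_2}^{G_1}W=(k[G_1]\otimes W)^{G_2}\to W$ for every finite dimensional $W$, combined with local finiteness and dualisation, does yield (2). But deducing that surjectivity from quasi-affineness of $G_1/G_2$ \emph{is} the theorem of Bia{\l}ynicki-Birula--Hochschild--Mostow, and ``exploiting the richness of global sections of $G_1/G_2$'' is not an argument. For the trivial module the claim is vacuous (constants are sections); the entire difficulty lies in the twist by $W$: surjectivity of evaluation says that the homogeneous bundle $G_1\times^{G_2}W$ over the quasi-affine base is generated by global sections at the base point, and nothing about the structure sheaf of a quasi-affine scheme produces such $W$-valued equivariant functions for free. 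The classical proofs get there by real work --- e.g.\ showing that quasi-affineness forces every $G_2$-invariant rational function on $G_1$ to be a ratio of $G_2$-invariant regular ones, reducing general $W$ to characters via determinants of submodules, and then extending semi-invariants --- none of which appears in the proposal. As written, the hardest implication of the theorem is asserted rather than proved.
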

\begin{proof}
	The equivalence of (1) and (2) is in \cite[Theorem~4]{BBHM}. The equivalence of (1) and (3) is \cite[Theorem~8]{BBHM}.
	
	Note that \cite{BBHM} only deals with smooth groups (varieties), but the same results hold for general algebraic groups. This result and extension to supergroups will appear in future work of the first author.
\end{proof}

\begin{remark}\label{RemObsSG}
	Condition~\ref{BBHM}(2) is equivalent to the condition that (the closure under taking direct summands of) the essential image of $\Rep G_1\to \Rep G_2$ is strongly generating in $\Rep G_2$.
\end{remark}

Recall that a scheme $X/k$ is anti-affine if $\Gamma(X,\cO_X)=k$. The following classical observation is a clear counterpart to Theorem~\ref{BBHM}.

\begin{theorem}\label{ThmEpi}
	The following are equivalent conditions on a closed subgroup $G_2$ of an algebraic group $G_1$.
	\begin{enumerate}
		\item The scheme $G_1/G_2$ is anti-affine.
		\item The restriction tensor functor $\Rep G_1\to \Rep G_2$ is fully faithful.
		\item In any $G_1$-representation, the $G_1$-invariants and $G_2$-invariants coincide.
	\end{enumerate}
	Such subgroups $G_2<G_1$ are called {\bf epimorphic}.
\end{theorem}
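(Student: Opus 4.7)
The plan is to establish $(2)\Leftrightarrow(3)$ and $(1)\Leftrightarrow(3)$; all three conditions are variants of the statement that $G_1$- and $G_2$-invariants coincide on appropriate representations, with (3) being the most direct formulation. I will deduce $(2)\Leftrightarrow(3)$ from the internal-Hom identity $\Hom_{G_i}(V,W)\simeq(V^\ast\otimes W)^{G_i}$, and $(1)\Leftrightarrow(3)$ from the tensor identity $V^{G_2}\simeq(V\otimes k[G_1/G_2])^{G_1}$, which is a form of Frobenius reciprocity.

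For $(3)\Rightarrow(2)$ apply (3) to $V^\ast\otimes W$ to obtain $\Hom_{G_1}(V,W)=\Hom_{G_2}(V|_{G_2},W|_{G_2})$ for all $V,W\in\Rep G_1$. Conversely, $(2)\Rightarrow(3)$ follows by taking $V=\unit$, so that fullness of restriction gives $W^{G_2}=\Hom_{G_2}(\unit,W|_{G_2})=\Hom_{G_1}(\unit,W)=W^{G_1}$.

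For $(1)\Leftrightarrow(3)$ the key identity is
$$V^{G_2}\;\simeq\;(V\otimes k[G_1/G_2])^{G_1}\qquad\text{for every }V\in\Rep G_1,$$
obtained by combining Frobenius reciprocity $V^{G_2}=\Hom_{G_2}(\unit,V|_{G_2})\simeq\Hom_{G_1}(\unit,\Ind^{G_1}_{G_2}V|_{G_2})$ with the tensor identity $\Ind^{G_1}_{G_2}(V|_{G_2})\simeq V\otimes \Ind^{G_1}_{G_2}\unit=V\otimes k[G_1/G_2]$. Assuming (1), $k[G_1/G_2]=k$ and the identity immediately yields $V^{G_2}=V^{G_1}$. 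For the converse, write $k[G_1]$ as a filtered union of its finite-dimensional $G_1$-subrepresentations $V_\alpha$, under the translation action making the identification of~\ref{DefQuo} apply, and use that invariants commute with filtered colimits:
$$\Gamma(G_1/G_2,\cO_{G_1/G_2})\;=\;k[G_1]^{G_2}\;=\;\varinjlim_\alpha V_\alpha^{G_2}\;=\;\varinjlim_\alpha V_\alpha^{G_1}\;=\;k[G_1]^{G_1}\;=\;k,$$
so $G_1/G_2$ is anti-affine.

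The main obstacle is giving a clean statement of the tensor identity, since $k[G_1/G_2]=\Ind^{G_1}_{G_2}\unit$ is generally only an ind-object in $\Rep G_1$ rather than an honest finite-dimensional representation. I would handle this either by formally working in $\Ind\Rep G_1$ and using that $(-)^{G_1}$ commutes with filtered colimits there, or by bypassing induction altogether via the classical isomorphism $V\otimes k[G_1]\simeq V'\otimes k[G_1]$ (where $V'$ has trivial action) which intertwines the diagonal $G_1$-action with the right-translation action, and then taking $G_2$-invariants on the second factor to produce $V\otimes k[G_1/G_2]$. A minor bookkeeping point is to fix the convention of left vs.\ right translation consistently so that the identification $k[G_1]^{G_2}=\Gamma(G_1/G_2,\cO_{G_1/G_2})$ from~\ref{DefQuo} applies verbatim.
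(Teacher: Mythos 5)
Your proof is correct, and it rests on the same two pillars as the paper's: Frobenius reciprocity for induction from $G_2$ to $G_1$, and the identification $k[G_1]^{G_2}\simeq\Gamma(G_1/G_2,\cO_{G_1/G_2})$ from~\ref{DefQuo}. The only organisational difference is the route: the paper proves $(1)\Leftrightarrow(2)$ directly from the adjunction $\Hom_{G_2}(\res V,k)\simeq\Hom_{G_1}(V,k[G_1]^{G_2})$ together with the reduction of full faithfulness of tensor functors to the spaces $\Hom(-,\unit)$, and then gets $(2)\Leftrightarrow(3)$ for free from $V^G\simeq\Hom_G(k,V)$; you instead prove $(2)\Leftrightarrow(3)$ via internal homs and $(1)\Leftrightarrow(3)$ via the projection formula. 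The ``main obstacle'' you identify --- that $\Ind^{G_1}_{G_2}\unit$ is only an ind-object, so the tensor identity needs care --- is real but entirely avoidable: since the only module you ever need to induce is the trivial one, you can apply Frobenius reciprocity in the form $\Hom_{G_2}(\res V,k)\simeq\Hom_{G_1}(V,k[G_1]^{G_2})$ for finite-dimensional $V$ and never invoke the tensor identity at all; this is exactly what the paper does. Your filtered-colimit argument for $(3)\Rightarrow(1)$ is fine (and makes explicit a local-finiteness point the paper leaves implicit), so both write-ups are complete.
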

\begin{proof}
	We freely use that fully faithfulness of tensor functors between tensor categories need only be verified on morphism spaces $\Hom(X,\unit)$ or $\Hom(\unit,X)$.
	By \cite[Chapter 3]{Jantzen} we have
	$$\Hom_{G_2}(\res^{G_1}_{G_2}V,k)\simeq\Hom_{G_1}(V,k[G_1]^{G_2}),$$
	for arbitrary $V\in \Rep G_1$. By \ref{DefQuo}, we find that (1) and (2) are equivalent.
	For any $V\in \Rep G$ we have $V^G\simeq \Hom_G(k,V)$, so (2) and (3) are equivalent.
\end{proof}
%Since the right adjoint of the restriction functor maps the tensor unit of $\Rep G_2$ to the $G_1$-module $k[G_1]^{G_2}$, see \cite[Chapter 3]{Jantzen}, the equivalence of (1) and (2) follows. The equivalence of (2) and (3) follows from the fact that invariants in representations can be identified with the morphism spaces from the tensor units.

\begin{theorem}\label{ThmHull}
	For a closed subgroup $H$ of an algebraic group $K$, consider the following properties on closed subgroups $H'<K$ containing $H$.
	\begin{enumerate}
		\item $H'$ is minimal under the condition that $H'<K$ is observable (minimal such that $K/H'$ is quasi-affine);
		\item $H'$ is maximal under the condition that $H<H'$ is epimorphic (maximal such that $H'/H$ is anti-affine);
		\item $H<H'$ is epimorphic and $H'<K$ is observable.
	\end{enumerate}
	All three conditions are equivalent and there exists a unique $H<H'<K$ which satisfies them. That group $H'$ is the {\bf observable hull} of $H$ in $K$.
\end{theorem}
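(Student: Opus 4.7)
My plan is to define $H' := \bigcap L$, where $L$ ranges over all closed subgroups of $K$ satisfying $H \leq L$ with $L$ observable in $K$. The collection is nonempty (it contains $K$ itself), and finite intersections of observable subgroups are observable: if $L_i = \mathrm{Stab}_K(v_i)$ for vectors $v_i$ in $K$-representations $V_i$ (using Theorem~\ref{BBHM}(3)), then $L_1 \cap L_2 = \mathrm{Stab}_K(v_1 \oplus v_2)$. Since $K$ is of finite type, descending chains of closed subgroups stabilise, so the defining intersection reduces to a finite one, and $H' < K$ is observable.

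Next I would verify that $H < H'$ is epimorphic via Theorem~\ref{ThmEpi}(3). Suppose for contradiction that some $H'$-representation $V$ carries a vector $v \in V^H \setminus V^{H'}$. Since $H' < K$ is observable, Theorem~\ref{BBHM}(2) lets us embed $V$ $H'$-equivariantly into a $K$-representation $W$. The stabiliser $S := \mathrm{Stab}_K(v) < K$ is observable by Theorem~\ref{BBHM}(3) and contains $H$, so $S \cap H'$ is observable in $K$ (intersection of two observable subgroups), contains $H$, and is a proper subgroup of $H'$ (as $v \notin V^{H'}$). This contradicts the minimality built into the definition of $H'$. Therefore $H'$ satisfies condition (3).

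The equivalence of the three conditions now reduces to two parallel arguments using the stabiliser picture. Assume $H'$ satisfies (3). If $L$ is any observable subgroup of $K$ containing $H$, write $L = \mathrm{Stab}_K(v)$ for $v$ in a $K$-representation $V$; then $v \in V^H = V^{H'}$ by epimorphicity of $H < H'$ via Theorem~\ref{ThmEpi}(3), so $H' \leq L$, establishing (1). Dually, if $L$ satisfies $H < L$ epimorphic, choose $v$ with $H' = \mathrm{Stab}_K(v)$ (possible by observability of $H' < K$); then $v \in V^H = V^L$ by epimorphicity of $H < L$, so $L \leq H'$, establishing (2). Both arguments also yield uniqueness of an $H'$ satisfying (3), and they give the converses $(1) \Rightarrow (3)$ and $(2) \Rightarrow (3)$ for free: the $H'$ constructed above is the unique candidate for (1) and for (2), so any subgroup satisfying (1) or (2) must coincide with it and therefore satisfy (3).

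The only subtle point I foresee is in the existence argument: we need $S \cap H'$ to be observable \emph{in $K$}, not merely in $H'$, in order to derive the contradiction with minimality. This is what forces us to establish the closure-under-intersections property of observables at the very start; once that is in hand, the remaining steps are formal manipulations of the characterisations from Theorems~\ref{BBHM} and~\ref{ThmEpi}.
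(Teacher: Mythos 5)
Your proposal is correct, and its skeleton coincides with the paper's: define $H'$ as the intersection of all observable subgroups of $K$ containing $H$, use the stabiliser characterisation of Theorem~\ref{BBHM}(3) to see that this class is closed under intersection and that noetherianity reduces the intersection to a finite one, and then prove $H<H'$ is epimorphic by embedding an $H'$-representation into a $K$-representation and invoking the observability of vector stabilisers. Where you diverge is in handling condition (2): the paper proves that the group satisfying (3) is maximal-epimorphic by a geometric argument (any morphism $H''/H\to K/H'$ from an anti-affine scheme to a quasi-affine one factors through $\Spec(k)$) and then gets uniqueness of maximal epimorphic extensions from the fact that a product of epimorphic subgroups is epimorphic. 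You instead pick a vector $v$ with $H'=\mathrm{Stab}_K(v)$ and use $V^H=V^L$ to conclude $L\leq H'$ for every epimorphic extension $L$ of $H$; this shows in one stroke that $H'$ is the \emph{maximum} (not merely a maximal element) of the epimorphic extensions, which makes the uniqueness statements and the converse implications $(1)\Rightarrow(3)$, $(2)\Rightarrow(3)$ entirely formal. Your route stays purely on the representation-theoretic side of Theorems~\ref{BBHM} and~\ref{ThmEpi} and is, if anything, slightly more uniform; the paper's geometric detour is the only substantive difference, and nothing is lost either way.
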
 
\begin{proof}
	It follows easily from Theorem~\ref{BBHM}(3) that the intersection of two observable subgroups is again observable. Clearly the collection of $H<L<K$ such that $K/L$ is quasi-affine is not empty, since we can take $K=L$. Since $K$ is noetherian, the intersection of all such $L$ is a closed subgroup $H'<K$ satisfying condition (1). In particular we found that there exists a unique $H'$ which satisfies (1).
	
	Let $H'$ be the group constructed in the above paragraph, or equivalently the unique one satisfying condition (1). We will show that $H'/H$ is anti-affine, in other words that $H'$ satisfies (3).
	By Theorem~\ref{ThmEpi}, we need to show that for every finite dimensional $V\in \Rep H'$, the inclusion $V^{H'}\subset V^H$ is an equality. By Theorem~\ref{BBHM}, we know that each $V$ is an $H'$-submodule of a $K$-module $W$. Hence it suffices to show that for every finite dimensional $W\in \Rep K$, the inclusion $W^{H'}\subset W^H$ is an equality. Consider $w\in W^H$ and denote its stabiliser in $K$ by $L>H$. Then $L$ is an observable subgroup of $K$ by Theorem~\ref{BBHM} and hence $H'<L$. In particular, $w\in W^{H'}$ as desired.

	%The variety $K/H$ fibres over $K/H'$ with fibre $H'/H$. There is a regular function $u$ on $K/H'$ with $u(1)\not= 0$ such that outside zeros of u, this fibration is trivial. So we have the corresponding projection $p: K/H-->H'/H$ defined outside of the zeros of $u$. 
	%Now suppose $h$ is a regular function on $H'/H$. Let $q=p\circ h$ be the corresponding rational function on $K/H$, and $q'$ the corresponding rational function on $K$. Then $u^mq'$ is regular for some $m$, and is $H$-invariant, so it must be $H'$-invariant. Thus $q'$ is $H'$-invariant, hence constant on $H'\subset K$. So $q$ is constant on $H'/H\subset K/H$. But the restriction of $q$ to $H'/H$ coincides with $h$, since the composition $p\circ i$ of $i: H'/H-->K/H$ with $p: K/H-->H'/H$ is the identity. Hence $h$ is contant, and $H'/H$ is anti-affine, as claimed.  

	Now we prove that a group which satisfies (3) also satisfies (2). For $H'$ as in (3) assume there exists $H'< H''<K$ such that $H''/H$ is anti-affine. Consider the morphism
	$$H''/H\tto H''/H'\hookrightarrow K/H'.$$
	The scheme $K/H'$ is quasi-affine and hence embeds into an affine scheme. The above morphism then factors via $\Spec(\Gamma(H''/H,\cO_{H''/H})=\Spec(k)$, using that $H''/H$ anti-affine. This shows that $H'=H''$.
	
	Theorem~\ref{ThmEpi}(3) shows that the product of two epimorphic subgroups is again epimorphic. This implies that there can be at most one group $H'$ which satisfies (2). This uniqueness now allows us to conclude by the above that any group which satisfies (2) also satisfies (1). This completes the proof.
\end{proof}

\subsection{Characterisation of tannakian envelopes}

\begin{theorem}\label{ConjTann}
Consider an affine group scheme $G$ over $k$ and a pseudo-tensor subcategory $\bD\subset\Rep G$.
\begin{enumerate}
\item The pseudo-tensor category $\bD$ admits an abelian envelope which is of the form $\Rep H$ for an affine group scheme $H/k$.
\item The inclusion $\bD\subset\Rep G$ is an abelian envelope if and only if $\bD$ is strongly generating.
\item The full  subcategory $\Coker \bD$ of $\Rep G$ comprising all objects with a presentation by objects in $\bD$ is an abelian subcategory as well as a pseudo-tensor subcategory of $\Rep G$, so in particular a tensor category.
\item Consider a faithful representation $V$ of $G$ (so $G$ is in particular an algebraic group). Then the abelian envelope of the pseudo-tensor subcategory of $\Rep G$ generated by $V$ is $\Rep G'$, with $G'<GL(V)$ the observable hull of $G$ in $GL(V)$.\end{enumerate}
\end{theorem}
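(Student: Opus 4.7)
My plan is to prove the four parts in the order (4), (3), (1), (2), deriving (3) from (4) via a filtered-union argument and treating (1) and (2) as formal consequences of (3) together with Tannakian reconstruction and Theorem~\ref{MainThmAbEnv}.

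For~(4), the faithful representation realises $G$ as a closed subgroup of $GL(V)$, so Theorem~\ref{ThmHull} produces the observable hull $G<G'<GL(V)$, with $G<G'$ epimorphic and $G'<GL(V)$ observable. Example~\ref{ExVsg} asserts that $V$ strongly generates $\Rep GL(V)$, and Remark~\ref{RemObsSG} combined with the observability of $G'<GL(V)$ transfers this to the statement that $V$ strongly generates $\Rep G'$. Theorem~\ref{MainThmAbEnv} then identifies $\Rep G'$ as the abelian envelope of the pseudo-tensor subcategory of $\Rep G'$ generated by $V$. Since $G<G'$ is epimorphic, Theorem~\ref{ThmEpi} makes the restriction $\Rep G'\to\Rep G$ fully faithful, canonically identifying the pseudo-tensor subcategories of $\Rep G'$ and $\Rep G$ generated by $V$, so $\Rep G'$ is indeed the abelian envelope of~$\bD$.

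For~(3), I would exhaust $\bD$ by the directed system of its finitely generated pseudo-tensor subcategories $\bD_\alpha\subset\bD$. Each such $\bD_\alpha$ can be taken to be generated by a single object $V_\alpha\in\bD$ (a finite direct sum of generators), and replacing $G$ by its image $G_\alpha:=G/\ker(G\to GL(V_\alpha))$ makes $V_\alpha$ a faithful representation of an algebraic group, with $\bD_\alpha$ sitting inside $\Rep G_\alpha\subset\Rep G$ via the fully faithful exact restriction. Part~(4) applied to $(G_\alpha,V_\alpha)$ produces an observable hull $H_\alpha$ such that $\Rep H_\alpha$ is the abelian envelope of $\bD_\alpha$, and Theorem~\ref{MainThmAbEnv} then forces $\Coker\bD_\alpha=\Rep H_\alpha$ as full subcategories of $\Rep G$ (strong generation of $\bD_\alpha$ in $\Rep H_\alpha$ forces every object to be a cokernel, since the kernel of a surjection from some $B\in\bD_\alpha$ is itself a quotient of another $A\in\bD_\alpha$). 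Because any finite diagram in $\bD$ lies in some $\bD_\alpha$, this yields $\Coker\bD=\bigcup_\alpha\Rep H_\alpha$, and the fullness and exactness of each embedding $\Rep H_\alpha\hookrightarrow\Rep G$ make this directed union closed under finite limits and colimits, tensor products, and duals inside $\Rep G$, which is precisely~(3). The main technical obstacle is ensuring this compatibility of envelopes across the directed system, but I expect it to be routine once the right bookkeeping is in place.

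Parts~(1) and~(2) then follow formally. For~(1), the tensor subcategory $\Coker\bD\subset\Rep G$ from~(3) inherits a fibre functor from $\Rep G\to\Vecc_k$, so Tannakian reconstruction produces an affine group scheme $H$ with $\Coker\bD\simeq\Rep H$; since $\bD$ is strongly generating in $\Coker\bD$ by construction of $\Coker$, Theorem~\ref{MainThmAbEnv} identifies $\Rep H$ as the abelian envelope of~$\bD$. For~(2), the ``if'' direction is Theorem~\ref{MainThmAbEnv} applied directly; for ``only if'', if $\bD\subset\Rep G$ is itself an abelian envelope, then comparing with the abelian envelope $\Coker\bD$ from~(3) via the universal property of abelian envelopes yields a tensor equivalence $\Rep G\simeq\Coker\bD$ extending the identity on $\bD$, and its composition with the inclusion $\Coker\bD\subset\Rep G$ must be the identity by universality, forcing $\Rep G=\Coker\bD$ and hence strong generation.
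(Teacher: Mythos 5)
Your proposal is correct and follows essentially the same route as the paper: part (4) via the observable hull and strong generation of $V$ in $\Rep GL(V)$, part (3) by reducing to finitely generated (single-object) pseudo-tensor subcategories and identifying each $\Coker\bD_\alpha$ with $\Rep H_\alpha$ through the fully faithful exact restriction, and parts (1), (2) from Theorem~\ref{MainThmAbEnv} together with Tannakian reconstruction. The only (cosmetic) deviations are that you invoke Remark~\ref{RemObsSG} directly in place of Proposition~\ref{PropExGLV} and obtain closure under duals from the monoidal subcategories $\Rep H_\alpha$ rather than from the kernel/cokernel duality argument.
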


%\footnote{Can we get a group scheme $G'$ not of finite type, starting from a pseudo-tensor category in $\Rep G$ for $G$ an algebraic group?}

We start the proof by investigating a specific case. % of finitely generated pseudo-tensor subcategories in finitely generated tannakian categories. In particular, for now $G$ is algebraic. We will use the results from the next subsection on the algebraic hull.

\begin{prop}\label{PropExGLV}
Let $G$ be an algebraic group.
\begin{enumerate}
\item A representation $V\in\Rep G $ is generating if and only if it is faithful.
\item The following properties are equivalent for a faithful representation $V\in\Rep G $:
\begin{enumerate} 
\item $V$ is strongly generating;
\item the representation $G<GL(V)$ makes $G$ an observable subgroup in $GL(V)$;
\item there exists no closed subgroup $G_1<GL(V)$ strictly containing $G$ such that $G<G_1$ is epimorphic.
\end{enumerate}
\end{enumerate} 
\end{prop}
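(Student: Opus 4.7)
The plan splits naturally into three pieces: part (1), the equivalence (a)$\Leftrightarrow$(b) in part (2), and (b)$\Leftrightarrow$(c) in part (2).

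For (1), the ``only if'' direction is easy: if $V$ is generating, then the kernel $N$ of $G\to GL(V)$ acts trivially on $V$ and $V^\ast$, hence on all tensor polynomials in them, hence on every subquotient, so $N$ acts trivially on every representation of $G$ and is therefore trivial. For the ``if'' direction, I would use the classical Chevalley-style argument: a faithful $V$ makes $G$ a closed subgroup of $GL(V)$, so $k[G]$ is a quotient $G$-module of $k[GL(V)]$; the latter is a filtered union of subrepresentations occurring in tensor polynomials of $V$ and $V^\ast$, and every finite-dimensional $G$-representation embeds into a finite direct sum of copies of $k[G]$, and is thus a subquotient of a polynomial in $V,V^\ast$.

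For (a)$\Leftrightarrow$(b), the key input is Example~\ref{ExVsg}, which says that $V$ is already strongly generating in $\Rep GL(V)$. It follows that the pseudo-tensor subcategory of $\Rep G$ generated by $V$ coincides, up to direct summands, with the essential image of the restriction functor $\Rep GL(V)\to\Rep G$. Hence $V$ is strongly generating in $\Rep G$ precisely when every finite-dimensional $G$-representation embeds into the restriction of some $GL(V)$-representation, which by Theorem~\ref{BBHM}(2) (see also Remark~\ref{RemObsSG}) is exactly the statement that $G$ be observable in $GL(V)$.

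Finally, (b)$\Leftrightarrow$(c) will be a direct consequence of Theorem~\ref{ThmHull}: the group $G$ is observable in $GL(V)$ if and only if its observable hull in $GL(V)$ equals $G$, which by the characterisation (2) of that hull happens exactly when no closed subgroup $G<G_1\le GL(V)$ makes $G<G_1$ epimorphic. The main place requiring care is the translation step in (a)$\Leftrightarrow$(b), namely verifying that ``strong generation by $V$'' in $\Rep G$ really matches the embedding formulation in BBHM once one restricts from $GL(V)$; but this should be formal given Example~\ref{ExVsg} and the compatibility of restriction with duals, tensor products and direct summands.
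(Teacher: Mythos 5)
Your proposal is correct and follows essentially the same route as the paper: part (1) is the classical fact (which the paper simply cites from \cite[\S 2]{DM}), the equivalence (a)$\Leftrightarrow$(b) combines Example~\ref{ExVsg} with Theorem~\ref{BBHM}(2), and (b)$\Leftrightarrow$(c) is read off from Theorem~\ref{ThmHull}, exactly as in the paper's proof. One phrasing to tighten: the pseudo-tensor subcategory generated by $V$ does not literally coincide up to direct summands with the essential image of restriction from $GL(V)$ (a $GL(V)$-module is in general only a quotient, not a summand, of a tensor polynomial in $V,V^\ast$), but the equivalence you actually need --- a $G$-module embeds into a restricted $GL(V)$-module if and only if it embeds into a polynomial in $V,V^\ast$ --- does hold and is precisely what the paper uses.
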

\begin{proof}
Part (1) is classical, see~\cite[\S 2]{DM}.
Now let $V$ be faithful. By Theorem~\ref{BBHM}, $G<GL(V)$ is observable if and only if every $G$-representation embeds into a $GL(V)$-representation. By Example~\ref{ExVsg}, every $GL(V)$-representation embeds into a polynomial in $V,V^\ast$. Hence, $G<GL(V)$ is observable if and only if every $G$-representation is a submodule of a polynomial in the $G$-representations $V,V^\ast$, which shows equivalence between (2)(a) and (2)(b). Equivalence between (2)(b) and (2)(c) follows from Theorem~\ref{ThmHull}.
\end{proof}

\begin{example}\label{ExGP}
Consider a non-trivial epimorphic subgroup $P<G$ (for instance $G$ is reductive and $P$ is a parabolic subgroup) and let $\bD\subset\Rep P$ be $\Rep G$ considered as a pseudo-tensor subcategory via the fully faithful tensor functor $\Rep G\to\Rep P$, see Theorem~\ref{ThmEpi}. By Proposition~\ref{PropExGLV}(1), $\bD$ is a generating pseudo-tensor category. However, clearly $\Rep P$ is not the abelian envelope of $\Rep G$.
\end{example}

\begin{proof}[Proof of Theorem~\ref{ConjTann}]
We start by proving part (4). Let $\bD$ be the pseudo-tensor subcategory generated by $V$ (comprising all direct summands of polynomials in $V$ and $V^\ast$). By Theorems~\ref{ThmHull} and~\ref{ThmEpi}, we can interpret $\Rep G'$ as a pseudo-tensor subcategory of $\Rep G$ and by construction, we have $\bD\subset\Rep G'$. Since $G'$ is observable in $GL(V)$, it follows from Proposition~\ref{PropExGLV}(2) that $\bD$ is strongly generating in $\Rep G'$. Part (4) thus follows from Theorem~\ref{MainThmAbEnv}.

We stay with the above setting. The exact restriction functor $\Rep G'\to \Rep G$ along $G<G'$ is fully faithful. Hence $\Rep G'$ is an abelian subcategory of $\Rep G$. Since $\bD$ is strongly generating in $\Rep G'$ it follows that $\Rep G'$ is precisely $\Coker\bD$ (where there is no ambiguity in which category we take cokernels).

Now let $G$ be an arbitrary affine group scheme. For any representation $V$, consider its kernel $N\lhd G$. The representation category $\Rep G/N$ of the algebraic group $G/N$ is equivalent with the topologising (so in particular abelian) subcategory of representations in $\Rep G$ on which $N$ acts trivially. Let $\bD$ be the pseudo-tensor category generated by $V$. By the above paragraph $\Coker \bD$ is an abelian subcategory of $\Rep G/N$, so also an abelian subcategory of $\Rep G$.

Now let $\bD$ be an arbitrary pseudo-tensor subcategory of $\Rep G$ and consider $\Coker\bD$. We claim that $\Coker \bD$ is closed in $\Rep G$ under taking kernels and cokernels. Since any morphism between objects in $\Coker\bD$ is a morphism between objects in $\Coker\bD'$ for $\bD'\subset\bD$ a finitely generated pseudo-tensor subcategory, the claim follows from the above paragraph. Hence $\Coker\bD$ is an abelian subcategory of $\Rep G$. To show that it is a pseudo-tensor subcategory, we only need to argue that it is closed under taking duals. However, the dual of a cokernel of a morphism in $\bD$ is the kernel of a morphism in $\bD$. Since $\Coker\bD$ is an abelian subcategory, the latter kernel is indeed contained in $\Coker\bD$. This concludes the proof of (3).

By Theorem~\ref{MainThmAbEnv}, $\Coker\bD$ is the abelian envelope of $\bD$. Clearly, we have $\Coker\bD=\Rep G$ if and only if $\bD$ is strongly generating. This proves part (2). That $\Coker\bD$ is of the form $\Rep H$ follows from the fact that we have a tensor functor $\Coker\bD\hookrightarrow \Rep G\to\Vecc$ and the classical theory of tannakian reconstruction, see \cite{DM, EGNO}.
\end{proof}

 %%%%%%%%%%%%%%%%%%%%%%%%%%%%%%%%%%%%%%%%%%%%%%%%%%%%%%%%%%%%%%%%%%%%%%%%%%%%%%%%%%%%%%%%%%%%%%%%%%%%%%%%%%%%%%%%%%%%

\section{Extension of scalars of tensor categories}\label{SecExt}

\subsection{Definition and connection with abelian envelopes}

Consider a field extension $K/k$ and a pseudo-tensor category $\bD/k$.

\subsubsection{}\label{DefKk}The naive extension of scalars $K\otimes\bD$ is a category with the same objects as $\bD$ and morphism $K$-spaces given by $K\otimes_k\bD(-,-)$. Its pseudo-abelian envelope $K\stackrel{.}{\otimes}\bD$ is canonically a pseudo-tensor category over $K$.

For a tensor category $\bT$ over $k$, we consider the biclosed Grothendieck category $(\Ind\bT)_K$ of $K$-modules in $\Ind\bT$. We denote by $\bT_K$ the (monoidal by Lemma~\ref{LemComMon}) full subcategory of compact objects in $(\Ind\bT)_K$. We have an obvious fully faithful monoidal functor
$$K\stackrel{.}{\otimes} \bT\;\to\;\bT_K,\quad X\mapsto K\otimes X,$$
which is strongly generating.

We write $\cV\subset\cU^{ex}(K\stackrel{.}{\otimes}\bT)$ for the class of morphisms $1\otimes u$ in $K\stackrel{.}{\otimes}\bT$ with $u\in\cU(\bT)$. Since $\cU(\bT)$ is permutable, see Example~\ref{ExVMon}(2), we know $\cV$ is permutable.
\begin{prop}\label{PropExtSca}
The following conditions are equivalent:
\begin{enumerate}
\item The monoidal category $\bT_K$ is a tensor category over $K$.
\item For every object $X$ in $\bT_K$, the endofunctors $X\otimes-$ and $-\otimes X$ are exact as endofunctors of $(\Ind\bT)_K$.
\item The pseudo-tensor category $K\stackrel{.}{\otimes}\bT$ admits an abelian envelope in which it is strongly generating, and $\cV$ is dense.
\item The category $\Sh(K\stackrel{.}{\otimes}\bT,\cT_\cV)$ is the ind-completion of a tensor category.
\end{enumerate}
It the conditions are satisfied, the abelian envelope from (3) is $\bT_K$ and $\Ind(\bT_K)\simeq (\Ind\bT)_K$.\end{prop}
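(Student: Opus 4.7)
The plan is to transport the problem to the biclosed Grothendieck category $\bC:=(\Ind\bT)_K$ of $K$-modules in $\Ind\bT$ and apply Proposition~\ref{PropInd} to the canonical $K$-linear monoidal functor $F\colon K\stackrel{.}{\otimes}\bT\to\bC$, $X\mapsto K\otimes X$. The first step is to verify the hypotheses: $\bC$ is biclosed Grothendieck, its unit $K=F(\unit)$ is compact with $\End_{\bC}(K)=K$, and $F$ is fully faithful. The last point reduces to the computation $\Hom_{\bC}(K\otimes X,K\otimes Y)\simeq K\otimes_k\bT(X,Y)$, which follows by writing $K=\colim_\alpha V_\alpha$ as a filtered colimit of finite-dimensional $k$-subspaces and using compactness of $X\in\bT$ in $\Ind\bT$. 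Condition~(G) of Proposition~\ref{PropInd} is then supplied by Lemma~\ref{LemComMon}, while~(F) is trivial because $F$ is full.

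The equivalence (1)$\Leftrightarrow$(2) now follows immediately: conditions~(3) and~(4) of Proposition~\ref{PropInd} applied to $F$, together with the identification of the compact objects of $\bC$ with $\bT_K$ from Lemma~\ref{LemComMon}, translate into (2) and (1) here, and the last clause of that proposition also yields $(\Ind\bT)_K\simeq\Ind(\bT_K)$.

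The crux of the proof of (1)$\Leftrightarrow$(4) is an unconditional monoidal equivalence $\Sh(K\stackrel{.}{\otimes}\bT,\cT_\cV)\simeq(\Ind\bT)_K$. A $K$-linear functor $K\stackrel{.}{\otimes}\bT^{\op}\to\Vecc^{\infty}_K$ is equivalently a $k$-linear functor $G\colon\bT^{\op}\to\Vecc^\infty$ equipped with a compatible $K$-action; since every object of $K\stackrel{.}{\otimes}\bT$ is a direct summand of some $K\otimes Y$ with $Y\in\bT$, additivity reduces the $\cV$-sheaf condition to the exactness of $G(\sigma_u\otimes Y)$ for all $u\in\cU(\bT)$ and $Y\in\bT$. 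By Corollary~\ref{CorCan}, this exactness characterises objects of $\Ind\bT$, and with the $K$-action recovers $(\Ind\bT)_K$. Combining with the previous paragraph one reads off (4)$\Leftrightarrow$(1).

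Finally, for the equivalence with~(3): given~(3), Theorem~\ref{ThmThm} applied to the abelian envelope $\bT'$ of $K\stackrel{.}{\otimes}\bT$ yields $\Ind\bT'\simeq\Sh(K\stackrel{.}{\otimes}\bT,\cT_{\cU})$, and the density of $\cV$ lets us replace $\cT_{\cU}$ by $\cT_{\cV}$, so combining with the previous identification forces $\bT'\simeq\bT_K$ and hence~(1). Conversely, assuming~(1), the category $\bT_K$ is the abelian envelope by Theorem~\ref{MainThmAbEnv} and Lemma~\ref{LemComMon}. The outstanding point, which I expect to be the technical heart of the argument, is verifying density of $\cV$: given a nonzero $u\colon U\to\unit$ in $K\stackrel{.}{\otimes}\bT$, the strategy is to embed $U$ as a summand of $K\otimes Y$ for some $Y\in\bT$, decompose the composite $K\otimes Y\to\unit$ as $\sum_i\alpha_i\otimes u_i$ with $u_i\in\cU(\bT)$, and exploit that $u$ is an epimorphism in $\bT_K$ (Proposition~\ref{PropTEx}) together with the rigid structure of $\bT_K$ to produce the required factorisation $v=u\circ f$ with $v\in\bar{\cV}$.
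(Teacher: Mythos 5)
Your architecture for (1)$\Leftrightarrow$(2)$\Leftrightarrow$(4) is sound and essentially the paper's: Proposition~\ref{PropInd} applied to $\bC=(\Ind\bT)_K$ gives (1)$\Leftrightarrow$(2) together with $\Ind(\bT_K)\simeq(\Ind\bT)_K$, and the unconditional identification $\Sh(K\stackrel{.}{\otimes}\bT,\cT_\cV)\simeq(\Ind\bT)_K$ (your ``$K$-action on an object of $\Ind\bT$'' description is exactly the paper's citation of \cite[Example~3.6.4 and Theorem~3.6.2]{PreTop} plus Corollary~\ref{CorCan}) gives (4). Two small points there: Lemma~\ref{LemComMon} \emph{assumes} condition (G) rather than supplying it (though (G) is immediate, since every $K$-module is a quotient of the free module on its underlying object); and to ``read off'' (4)$\Leftrightarrow$(1) you need the equivalence of sheaf and module categories to be \emph{monoidal}, which the paper gets from the uniqueness of the biclosed monoidal structure in \ref{SecShD} once the equivalence is checked to commute with the functors out of $K\stackrel{.}{\otimes}\bT$. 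You should say this.

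The genuine gap is the implication (1)$\Rightarrow$(3), specifically the density of $\cV$, which you explicitly leave as a ``strategy''. The hands-on plan you sketch is not obviously executable: writing $u=\sum_i\alpha_i\otimes u_i$ on $K\otimes Y$ with the $\alpha_i$ linearly independent over $k$, you must produce a \emph{nonzero} $w\in\cU(\bT)$ (or in $\overline{\cU(\bT)}$) and $f$ with $u\circ f=1\otimes w$; equivalently, the pullback of the extension $0\to\ker u\to U\to\unit\to 0$ in $\bT_K$ along some $1\otimes w$ must split. Knowing that $u$ is an epimorphism does not by itself force the image of $u\circ-$ on $\Hom(K\otimes W,K\otimes Y)$ to meet $1\otimes\bT(W,\unit)$ nontrivially, and the cross terms $\alpha_i\beta_j\otimes u_if_j$ do not cancel for any evident choice of $f=\sum_j\beta_j\otimes f_j$. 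The paper sidesteps this entirely: it never verifies density by hand, but obtains (3)$\Leftrightarrow$(4) as a special case of \cite[Corollary~4.4.2]{PreTop}, where the density of $\cV$ is packaged as part of the equivalence between ``$\Sh(\bD,\cT_\cV)$ is the ind-completion of a tensor category'' and ``$\bD$ is strongly generating in its abelian envelope and $\cT_\cV=\cT_\cU$''. Put differently, once you know $\Sh(\bD,\cT_\cV)\simeq\Ind(\bT_K)\simeq\Sh(\bD,\cT_\cU)$ compatibly with the functors from $\bD$, the two pretopologies coincide and density follows from the general machinery of \cite{PreTop}, not from an explicit factorisation. As written, your proof establishes (1)$\Leftrightarrow$(2)$\Leftrightarrow$(4) and (3)$\Rightarrow$(1), but not (1)$\Rightarrow$(3); you need either to invoke the cited result or to actually carry out the density argument, which is harder than your sketch suggests.
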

\begin{proof}
That statements (3) and (4) are equivalent is a special case of~\cite[Corollary~4.4.2]{PreTop}. Furthermore, it follows from \cite[Theorem~4.4.1]{PreTop} that the abelian envelope in (3) then has ind-completion $\Sh(K\stackrel{.}{\otimes}\bT,\cT_\cV)$. Proposition~\ref{PropInd} applied to $\bC=(\Ind\bT)_K$ shows that (1) and (2) are equivalent and they are equivalent with $(\Ind\bT)_K$ being the ind-completion of a tensor category, where the latter tensor category then must be $\bT_K$.

In conclusion, all statements in the proposition follow if we prove that $(\Ind\bT)_K$ and  $\Sh(K\stackrel{.}{\otimes}\bT,\cT_\cV)$ are monoidally equivalent. By uniqueness of the monoidal structure, see~\ref{SecShD}, it is sufficient to show there exists an equivalence between $(\Ind\bT)_K$ and  $\Sh(K\stackrel{.}{\otimes}\bT,\cT_\cV)$ which yields a commutative diagram with the canonical functors out of $K\stackrel{.}{\otimes}\bT$. By \cite[Example~3.6.4 and Theorem~3.6.2]{PreTop} and Corollary~\ref{CorCan}, we find that $ \Sh(K\stackrel{.}{\otimes}\bT,\cT_\cV)$ is equivalent to the category of $k$-linear functors from $K$, as a 1-object category, to $\Ind\bT$. Hence  $(\Ind\bT)_K\simeq \Sh(K\stackrel{.}{\otimes}\bT,\cT_\cV)$. That the equivalence from \cite[Example~3.6.4 and Theorem~3.6.2]{PreTop} respect the functors from $K\stackrel{.}{\otimes}\bT$ follows easily.
\end{proof}
%That this equivalence must lift to a monoidal one follows from the fact that there is the monoidal structure on $ \Sh(K\stackrel{.}{\otimes}\bT,\cT_\cV)$ is essentially unique with the property that ............

%By Proposition~\ref{PropInd}, $\bT_K$ is a tensor category if and only if $(\Ind\bT)_K$ is the ind-completion of a tensor category, which happens if and only if the endofunctors $X\otimes-$ and $-\otimes X$ are exact for every object $X$ in $\bT_K$. We thus find that (1), (2) and (3) are equivalent.
%By~\cite[Corollary~4.4.2]{PreTop}, the abelian envelope in (3) is then precisely $\bT_K$.

%\footnote{Same issue as for Deligne tensor product for applying Theorem~\ref{TheoremV}, I only see how to split elements of the form $\lambda\otimes f\in K\otimes_k\bT(X,Y)$, so not clear how we can use our results to prove existence of extension of scalars}

\begin{example}\label{ex:ext}
	In the following cases, the properties in Proposition~\ref{PropExtSca} are satisfied.
	\begin{enumerate}
		\item If $K$ is a finite separable extension of $k$. Indeed, in that case every compact object $X$ in $(\Ind\bT)_K$ is a direct summand of the rigid object $K\otimes X_0$ (see for instance p.~603 in \cite{Del14}), where $X_0\in\bT$ is the underlying object of the $K$-module $X$.
		\item If $k$ is algebraically closed and $\bT$ is artinian, see \cite[Lemma~2.2]{EO}. We will generalise this in Corollary~\ref{cor:perf} below.
		\item If $\bT$ is tannakian, in the sense of \cite{Del90} that it fibres over a $k$-scheme. Indeed, tannakian categories are artinian, so by Proposition~\ref{prop:abs} below we only need to consider the case where $K/k$ is finite, which is verified in~\cite[5.10]{Del14}. 
		\item If $\bT$ is the (non-artinian) tensor category in \cite[2.19]{Del90}, for arbitrary $k$ of characteristic~$0$. In this case we have in fact $K\stackrel{.}{\otimes}\bT\simeq \bT_K$.
		%\item If $\bT=\Rep_k G$, the category of representations of a finite group $G$.
	\end{enumerate}
\end{example}

\begin{remark}
Assume that $K/k$ is a finite extension. In this case $\bT_K$ is easily seen to be equivalent to the abelian category of $K$-modules in $\bT$. We will use this observation freely.
\end{remark}
\subsection{Absolute tensor categories}

\begin{definition}
A tensor category $\bT$ over $k$ is {\bf absolute} if $\bT_K$ is a tensor category for every field extension $K/k$.
\end{definition}

\begin{remark}
\begin{enumerate}
\item We don't know any examples of tensor categories that are not absolute. All tensor categories are absolute if and only if extensions of scalars along $k\to k[t]/(t^p-\lambda)$ (for $\mathrm{char}(k)=p>0$ and $\lambda\in k\backslash k^p$) of tensor categories always yield tensor categories.
\item Assume that $\bT$ is artinian, then $\bT_K$ is automatically artinian over $K$, as follows for instance from~\cite[Theorem 1.9.15]{EGNO}.
\end{enumerate}

\end{remark}

\begin{prop}\label{prop:abs}
If $\bT$ is artinian, then the following are equivalent.
\begin{enumerate}
\item $\bT$ is absolute.
\item $\bT_{K}$ is a tensor category for each finite extension $K/k$.
\item $\bT_{\overline{k}}$ is a tensor category, where $\overline{k}$ is an algebraic closure of $k$.
\end{enumerate}
\end{prop}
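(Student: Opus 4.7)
The plan is to establish the cycle (1)$\Rightarrow$(2)$\Rightarrow$(3)$\Rightarrow$(1); the first implication is immediate since finite extensions form a subclass of all extensions. Throughout I will use Proposition~\ref{PropExtSca}(2) to reduce the claim that $\bT_K$ is a tensor category to exactness of $X\otimes_K -$ and $-\otimes_K X$ on $(\Ind\bT)_K$ for every compact $X\in\bT_K$.

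For (2)$\Rightarrow$(3), the strategy is to descend compact objects of $\bT_{\overline{k}}$ to finite subextensions of $\overline{k}/k$. Given $X\in\bT_{\overline{k}}$, Lemma~\ref{LemComMon} expresses $X$ as the cokernel of some $f:\overline{k}\otimes A\to\overline{k}\otimes B$ with $A,B\in\bT$ (grouping a finite presentation into single direct sums). The adjunction between restriction of scalars and $\overline{k}\otimes -$, combined with compactness of $A$ in $\Ind\bT$, yields
\[
\Hom_{(\Ind\bT)_{\overline{k}}}(\overline{k}\otimes A,\overline{k}\otimes B)\;\simeq\;\overline{k}\otimes_k\bT(A,B),
\]
and since $\bT(A,B)$ is finite-dimensional by the artinian hypothesis, the coefficients of $f$ lie in a finite subextension $K/k$. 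Thus $f$ descends to $f_K:K\otimes A\to K\otimes B$ in $(\Ind\bT)_K$, and $X=\overline{k}\otimes_K X_K$ with $X_K:=\coker f_K\in\bT_K$. Now take a short exact sequence $\xi$ in $(\Ind\bT)_{\overline{k}}$: restriction of scalars $(\Ind\bT)_{\overline{k}}\to(\Ind\bT)_K$ is exact and reflects exactness (kernels and cokernels are computed in $\Ind\bT$ on both sides), so $\xi$ remains exact in $(\Ind\bT)_K$; tensoring with $X_K$ preserves exactness by hypothesis via Proposition~\ref{PropExtSca}(2); and the standard identity $X_K\otimes_K M=X\otimes_{\overline{k}} M$ for any $\overline{k}$-module $M$ (restricted to $K$) identifies the result with $X\otimes_{\overline{k}}\xi$, which is therefore exact. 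The case $-\otimes_{\overline{k}} X$ is analogous.

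For (3)$\Rightarrow$(1), fix an arbitrary extension $L/k$ and choose an algebraic closure $\overline{L}$ of $L$ containing a copy of $\overline{k}$. Since $\bT$ is artinian, so is $\bT_{\overline{k}}$, and Example~\ref{ex:ext}(2) applied to the artinian tensor category $\bT_{\overline{k}}$ over the algebraically closed field $\overline{k}$, with extension $\overline{L}/\overline{k}$, yields that $(\bT_{\overline{k}})_{\overline{L}}$ is a tensor category; by transitivity of scalar extension and the final assertion of Proposition~\ref{PropExtSca} this coincides with $\bT_{\overline{L}}$. Since $\overline{L}$ is a free, hence faithfully flat, $L$-module, the functor $\overline{L}\otimes_L -:(\Ind\bT)_L\to(\Ind\bT)_{\overline{L}}$ is faithful and exact. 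Given $X\in\bT_L$, its base change $X_{\overline{L}}\in\bT_{\overline{L}}$ is compact, so $X_{\overline{L}}\otimes_{\overline{L}} -$ is exact. Combining the compatibility $\overline{L}\otimes_L(X\otimes_L M)\simeq X_{\overline{L}}\otimes_{\overline{L}}(\overline{L}\otimes_L M)$ with faithful exactness of $\overline{L}\otimes_L -$ forces $X\otimes_L -$ (and similarly $-\otimes_L X$) to be exact, and Proposition~\ref{PropExtSca}(2) then gives that $\bT_L$ is a tensor category.

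The main obstacle lies in the descent step of (2)$\Rightarrow$(3): verifying cleanly that every compact object in $(\Ind\bT)_{\overline{k}}$ arises via base change from a finite subextension. This rests on two ingredients that must be handled carefully — that compact objects admit finite presentations by the specific rigid objects $\overline{k}\otimes A$ with $A\in\bT$, and that the relevant $\Hom$-space decomposes as $\overline{k}\otimes_k\bT(A,B)$, for which the artinian hypothesis (through finite-dimensionality of $\bT(A,B)$) is essential. Once this descent is in place, both remaining implications reduce to routine manipulations with exact and faithfully flat scalar-extension functors.
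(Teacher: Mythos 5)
Your proposal is correct and follows essentially the same route as the paper: the trivial implication to finite extensions, descent of a finite presentation of a compact object of $\bT_{\overline{k}}$ to a finite subextension for (2)$\Rightarrow$(3), and reduction of an arbitrary extension to the algebraically closed case via \cite[Lemma~2.2]{EO} (i.e.\ Example~\ref{ex:ext}(2)) for (3)$\Rightarrow$(1). The only cosmetic differences are that the paper concludes (2)$\Rightarrow$(3) by observing that $\coker(f')$ is rigid and that rigidity is preserved under base change, and concludes (3)$\Rightarrow$(1) by citing Proposition~\ref{PropInd}(2) rather than unwinding the faithfully flat base-change argument for exactness of $X\otimes_L-$ as you do.
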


\begin{proof}	
It is clear that (1) implies (2). To show that (2) implies (3), by~Proposition~\ref{PropExtSca} it suffices to show that every object in $\bT_{\overline{k}}$ is rigid.
Every object $Z$ in $\bT_{\overline{k}}$ is the cokernel of a morphism $\overline k\otimes X\xrightarrow{f} \overline k\otimes Y$ in ${\overline{k}}\otimes\bT$. Write $f$ as a finite sum $\Sigma_i g_i\otimes h_i$ with $g_i\in \overline{k}$ and $h_i \in {\bT}(X,Y)$ and let $K$ be some finite extension of $k$ containing $g_i$ for all $i$. Then $f=\overline{k}\otimes_K f'$ for some morphism $f':K\otimes X\to K\otimes Y$ in~$\bT_{K}$. 
Since $\bT_{K}$ is a tensor category, the cokernel of $f'$ in $\bT_{K}$ is rigid. Hence $Z=\overline k\otimes_K \coker(f')$ is rigid in~$\bT_{\overline{k}}$.

Finally, we prove that (3) implies (1). Suppose $\bT_{\overline{k}}$ is a tensor category and let $K$ be a field extension of $k$ with algebraic closure $\overline K$. 
Since $\bT_{\overline{k}}$ must be absolute by~\cite[Lemma~2.2]{EO}, certainly $(\Ind\bT)_{\overline K}\simeq ((\Ind\bT)_{\overline{k}})_{\overline K}$ is the ind-completion of a tensor category. 
There is a faithful exact linear monoidal functor from $(\Ind\bT)_{K}$ to $(\Ind\bT)_{\overline K}$, so $\bT_K$ is a tensor category by Proposition~\ref{PropInd}.
\end{proof}

In light of~Example~\ref{ex:ext}(1), we get the following corollary:

\begin{corollary}\label{cor:perf}
	Every artinian tensor category over a perfect field is absolute.
\end{corollary}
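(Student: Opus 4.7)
The plan is to combine Proposition~\ref{prop:abs} with Example~\ref{ex:ext}(1) in a direct way. Recall that a field $k$ is perfect precisely when every algebraic (equivalently, every finite) extension $K/k$ is separable. So the hypothesis gives us that every finite extension of $k$ is finite separable.

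First, I would fix an artinian tensor category $\bT$ over a perfect field $k$, and let $K/k$ be an arbitrary finite extension. Since $K/k$ is then finite separable, Example~\ref{ex:ext}(1) guarantees that the properties in Proposition~\ref{PropExtSca} hold for the extension $\bT_K$, and in particular $\bT_K$ is a tensor category over $K$. Thus condition (2) of Proposition~\ref{prop:abs} is verified for $\bT$.

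Next, I would invoke Proposition~\ref{prop:abs}, specifically the implication $(2)\Rightarrow(1)$: since $\bT$ is artinian and $\bT_K$ is a tensor category for every finite extension $K/k$, we conclude that $\bT$ is absolute, i.e.\ $\bT_K$ is a tensor category for every field extension $K/k$.

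There is no real obstacle here: the corollary is a straightforward combination of the already-established finite separable case with the reduction, available for artinian categories, from arbitrary extensions to finite ones. The proof is essentially a one-liner, and I would present it as such.
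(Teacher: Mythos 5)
Your proof is correct and is exactly the argument the paper intends: since $k$ perfect means every finite extension is separable, Example~\ref{ex:ext}(1) gives condition (2) of Proposition~\ref{prop:abs}, and the implication $(2)\Rightarrow(1)$ there yields absoluteness. No issues.
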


%\begin{corollary}\label{cor:tan}
%	Every tannakian category is absolute.
%\end{corollary}

\begin{lemma}
If a tensor category $\bT$ over $k$ admits a tensor functor to an absolute tensor category $\bT'$ over $k$, then $\bT$ is also absolute.
\end{lemma}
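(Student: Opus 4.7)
The plan is, for a fixed field extension $K/k$, to verify condition (2) of Proposition~\ref{PropInd} for the $K$-linear biclosed Grothendieck category $\bC:=(\Ind\bT)_K$, so that the equivalence $(2)\Rightarrow(1)$ of that proposition identifies $\bC$ with $\Ind(\bT_K)$ and forces $\bT_K$ to be a tensor category over $K$.

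The first step is to extend the given tensor functor $F\colon\bT\to\bT'$ to a faithful exact cocontinuous monoidal functor $\widetilde F\colon\Ind\bT\to\Ind\bT'$, and then to descend it to a $K$-linear functor $\widetilde F_K\colon(\Ind\bT)_K\to(\Ind\bT')_K$ on $K$-modules. The descended functor is $K$-linear, faithful and exact because the forgetful functors to $\Ind\bT$ and $\Ind\bT'$ are faithful and reflect exactness, and it is monoidal because the relative tensor product $\otimes_K$ is computed as a coequaliser in the underlying ind-categories, which is preserved by the cocontinuous monoidal $\widetilde F$. Since $\bT'$ is absolute, Proposition~\ref{PropExtSca} applied to $\bT'$ identifies $(\Ind\bT')_K$ with $\Ind(\bT'_K)$, with $\bT'_K$ a tensor category over $K$. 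Composing, we obtain a faithful exact $K$-linear monoidal functor from $\bC$ to the ind-completion of a tensor category over $K$.

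Before invoking Proposition~\ref{PropInd}, one must check its standing hypotheses (a) and (b) for $\bC$ regarded as a $K$-linear category. For (a), the unit $K\otimes\unit_\bT$ is compact in $\bC$ because its Hom functor reduces, via the free/forgetful adjunction, to $\Hom_{\Ind\bT}(\unit_\bT,-)$, which commutes with filtered colimits; and its endomorphism algebra is $K$. For (b) I would take the natural faithful $K$-linear monoidal functor $K\stackrel{.}{\otimes}\bT\to\bC$: property (G) follows because every object of $\Ind\bT$ is a quotient of a direct sum of objects in $\bT$, and property (F) follows from the identification $\bC(K\otimes X,K\otimes Y)\simeq K\otimes_k\bT(X,Y)$, which shows that every such morphism already lies in the image of $K\stackrel{.}{\otimes}\bT$. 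With these checks in place, condition (2) of Proposition~\ref{PropInd} is established, so $\bC\simeq\Ind(\bC^c)=\Ind(\bT_K)$ with $\bT_K$ a tensor category over $K$, as desired.

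The main obstacle in this argument is the careful bookkeeping in the first step, namely ensuring that $\widetilde F_K$ genuinely inherits faithfulness, exactness and monoidality at the level of $K$-modules (in particular that $\otimes_K$ is computed compatibly on both sides). Once $\widetilde F_K$ is set up, the verification of the hypotheses (a) and (b) is routine, and the conclusion is essentially a direct application of the equivalence $(1)\Leftrightarrow(2)$ of Proposition~\ref{PropInd} together with Proposition~\ref{PropExtSca}.
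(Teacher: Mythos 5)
Your proposal is correct and follows essentially the same route as the paper: extend $F$ to a faithful exact functor $\Ind\bT\to\Ind\bT'$, descend to $(\Ind\bT)_K\to(\Ind\bT')_K\simeq\Ind(\bT'_K)$ using absoluteness of $\bT'$, and apply Proposition~\ref{PropInd}(2)$\Rightarrow$(1). The paper's proof is just a terser version of this; your verification of the standing hypotheses (a) and (b) and of monoidality of the descended functor fills in details the paper leaves implicit.
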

\begin{proof}
It is well-known that $\Ind\bT\to\Ind\bT'$ remains exact and faithful. Fix a field extension $K/k$. The canonical functor $(\Ind\bT)_K\to(\Ind\bT')_K$ is exact and faithful, since the functors forgetting the $K$-action are exact and faithful. We can therefore apply Proposition~\ref{PropInd}.
\end{proof}

%\begin{corollary}
%If an artinian tensor category $\bT$ over $k$ admits a tensor functor to an absolute tensor category $\bT'$ over $k$, then $\bT$ is also absolute.
%\end{corollary}

%\begin{proof}
%Let $l$ be a finite extension of $k$.  
%The forgetful functor $(\Ind\bT)_l\to \Ind\bT$ restricts to a faithful exact functor $U:\bT_l\to \bT$. Given a tensor functor $F:\bT \to \bT'$ to an absolute tensor category, the associated monoidal functor $F_l:\bT_{l}\to \bT'_{l}$ is exact and faithful since it satisfies $UF_l=FU$. We conclude that $\bT_{l}$ is a tensor category by~Proposition~\ref{PropExtSca}, and the corollary follows from~Proposition~\ref{prop:abs}.
%\end{proof}

\subsection{Super-tannakian categories}

Recall that a super-tannakian category is a symmetric tensor category, over a field $k$ of characteristic different from 2, which admits a tensor functor to the biclosed Grothendieck category of super $R$-modules for a super-commutative $k$-algebra~$R$. By taking a maximal ideal in $R$, we can assume that $R$ is in fact a field extension of~$k$.

The following theorem extends \cite[Theorem~5.4]{Del14} from tannakian to super-tannakian categories. Note that \cite[Theorem~5.4]{Del14} only deals with finite field extensions, which is sufficient by Proposition~\ref{prop:abs}.
\begin{theorem}\label{th:superabs}
Every super-tannakian category is absolute. Moreover, $\bT_K$ remains super-tannakian for every field extension $K/k$ and super-tannakian category $\bT$ over $k$.
\end{theorem}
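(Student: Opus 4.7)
The plan is to adapt the argument of~\cite[5.4]{Del14} by promoting the given fibre functor on~$\bT$ to a $K$-linear fibre functor on $\bT_K$, and then invoking Proposition~\ref{PropInd} to extract both the tensor structure on~$\bT_K$ and the super-tannakian conclusion simultaneously. Since the \emph{moreover} clause implies absoluteness, I will prove the stronger statement: $\bT_K$ is super-tannakian over~$K$ for every field extension $K/k$.

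Fix such a $K/k$ and a fibre functor $\omega\colon\bT\to\sMod_R$ with $R/k$ a field extension. Pick any maximal ideal $\mathfrak{m}$ of $A:=R\otimes_k K$ and set $R':=A/\mathfrak{m}$. The natural maps $R\to R'$ and $K\to R'$ are non-zero algebra homomorphisms out of fields, hence injective, so $R'$ is a common field extension of~$R$ and~$K$. The composite
\[
\omega'\colon\bT\xrightarrow{\omega}\sMod_R\xrightarrow{R'\otimes_R-}\sMod_{R'}
\]
is $k$-linear, exact and faithful (as $R\to R'$ is a faithfully flat extension of fields), symmetric monoidal (as $R'\otimes_R-$ intertwines $\otimes_R$ with $\otimes_{R'}$), and $K$-linear because $R'$ is a $K$-algebra. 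Passing to ind-completions and using $K\hookrightarrow R'$ to identify the auxiliary $K$-action on the target with the restriction of its $R'$-structure, $\omega'$ extends to a faithful exact $K$-linear symmetric monoidal functor
\[
\widetilde\omega\colon(\Ind\bT)_K\longrightarrow\sMod_{R'}^{\infty}\simeq \Ind(\sMod_{R'}).
\]

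Next I would apply Proposition~\ref{PropInd} over the base field~$K$ to the biclosed Grothendieck category $\bC:=(\Ind\bT)_K$ with $F$ the inclusion $K\stackrel{.}{\otimes}\bT\hookrightarrow\bC$. Hypothesis~(a) holds because $\End_{\bC}(\unit)=K$, and (F), (G) are automatic: the inclusion~$F$ is fully faithful, and every compact object of~$\bC$ has a presentation by rigid objects $K\otimes X$ with $X\in\bT$. The functor $\widetilde\omega$ supplies condition~(2), since $\sMod_{R'}$ is a tensor category over the field extension $R'/K$. Consequently $\bC$ is monoidally equivalent to the ind-completion of a tensor category over~$K$, which must be $\bT_K=\bC^{c}$; restricting $\widetilde\omega$ to the compact objects yields a faithful exact $K$-linear symmetric tensor functor $\bT_K\to\sMod_{R'}$, exhibiting $\bT_K$ as super-tannakian over~$K$.

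The step requiring most care is the verification that $\widetilde\omega$ is genuinely symmetric monoidal: the $K$-balanced tensor product in $(\Ind\bT)_K$ must match under $\widetilde\omega$ with the $R'$-balanced tensor product in $\sMod_{R'}^{\infty}$. Concretely, the two natural $K$-actions on $R'\otimes_R\omega(X\otimes Y)$---one via $K\hookrightarrow R'$ and one via the $K$-structure on $X\otimes Y$---must coincide, and this follows from the universal property of $A=R\otimes_k K$ combined with the fact that $\omega$ is already a tensor functor over~$R$. Granted this compatibility, Proposition~\ref{PropInd} delivers both the tensor structure on~$\bT_K$ and the super-tannakian fibre functor in a single application.
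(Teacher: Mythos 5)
There is a genuine gap at the construction of $\widetilde\omega$, and it sits exactly where the argument has to do real work. Write $A=R\otimes_kK$ and $R'=A/\mathfrak{m}$. For $X\in\bT_K$, the object $(\Ind\omega')(X)$ carries \emph{two} $K$-actions: the internal one coming from the $K$-module structure of $X$ in $\Ind\bT$, and the external one via $K\hookrightarrow R'$. These do not coincide in general; their difference is controlled by the kernel of $A\to R'$. In the only case not already covered by Example~\ref{ex:ext}(1) and Proposition~\ref{prop:abs} — namely $K=l$ purely inseparable of degree $p$ over $k$, with $R$ containing a $p$-th root $\mu$ of $\lambda$ — one has $A\simeq R[\xi]/(\xi^p)$ with $\xi=t\otimes 1-1\otimes\mu$, a local ring with nilpotents, $R'=R$, and $\xi$ acts on $\omega(X)$ precisely as the difference of the two actions of $t$. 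To make $\widetilde\omega$ monoidal for $\otimes_K$ on the source versus $\otimes_{R'}$ on the target you must coequalize the two actions, i.e.\ base change along $A\to A/\mathfrak{m}$, and that operation is neither exact nor faithful on $A$-modules unless $\omega(X)$ is \emph{free} over $A$. Proving that freeness is the entire content of the paper's argument (the faithfulness step via the monomorphism $l\otimes\unit\to X\otimes X^\ast$ combined with Lemma~\ref{lem:super}), and it is exactly where the super-grading and $p>2$ enter through the skew-symmetriser acting on $\otimes^n_A\omega(X)$. Your proposal never uses the super structure at all, which is a warning sign: the same reasoning would "prove" that every tensor category fibred over a field is absolute, which is not known.

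A secondary point: the appeal to "the universal property of $A=R\otimes_kK$" only furnishes an $A$-module structure on $\omega(X)$; it gives no reason for that structure to factor through $A/\mathfrak{m}$, so the asserted identification of the two $K$-actions is unjustified rather than merely delicate. The correct first move (as in the paper) is to reduce via Proposition~\ref{prop:abs}, iteration and Example~\ref{ex:ext}(1) to a purely inseparable extension $l/k$ of degree $p$, keep the fibre functor valued in $(l\otimes_kK)$-supermodules without passing to a residue field, and then show its images are free over $K[\xi]/(\xi^p)$; exactness of $X\otimes-$ and Proposition~\ref{PropExtSca} then finish the proof. The final step of your outline (applying Proposition~\ref{PropInd} once a faithful exact monoidal functor to the ind-completion of a tensor category is in hand, and observing that $\bT_K$ then fibres over $K\otimes_kR$, hence stays super-tannakian) is fine; the missing ingredient is that functor itself.
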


If $\bT$ is a super-tannakian category over $k$ and $\bT_K$ is a tensor category for some field extension $K/k$, then $\bT_K$ is automatically super-tannakian. Indeed, if $\bT$ fibres over $R$, then $\bT_K$ fibres over $K\otimes_kR$.
It thus suffices to show that every super-tannakian category is absolute. By Corollary~\ref{cor:perf} we already know that tensor categories over fields of characteristic zero are absolute, so we can assume that $k$ is a field of characteristic $p>2$ in the proof of~Theorem~\ref{th:superabs}.

\begin{lemma}\label{lem:super}
Let $K$ be a field and set $R:=K[\xi]/(\xi^p)$. %be a commutative local algebra and 
Consider the monoidal category $\bC$ of finitely generated $\mZ/2$-graded $R$-modules, with symmetric structure given by the Koszul sign rule. If for $M\in\bC$ and every $n\in\mN$, the image of every element of $\mZ S_n$ on $\otimes^n_RM$ is either zero or a faithful module, then $M$ is a free $R$-module.
\end{lemma}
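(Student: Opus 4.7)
The plan is to argue by contradiction: suppose $M$ is not free. By the structure theorem for finitely generated modules over the artinian local principal ideal ring $R$, we may write $M \cong \bigoplus_i R/(\xi^{n_i})$ with $1 \le n_i \le p$, and freeness is equivalent to $n_i = p$ for all $i$. Applying the hypothesis at $n=1$ to $\tau = \mathrm{id} \in \mZ S_1$, the image is $M$ itself, which must be either zero or faithful; since $\bigoplus R/(\xi^{n_i})$ is faithful iff some $n_i = p$, we conclude that $M$ has a free summand. Thus I would write $M = F \oplus N$, where $F$ is a nonzero $\mZ/2$-graded free $R$-module, say of super-rank $(a,b)$, and $N = \bigoplus_j R/(\xi^{n_j})$ with all $n_j < p$. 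Set $\tilde a := \max_j n_j < p$, so that $\xi^{\tilde a}$ annihilates $N$.

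For each $n \ge 1$, decompose $M^{\otimes n} = \bigoplus_{S \subseteq [n]} T_S$, where $T_S$ places $N$-factors at positions in $S$ and $F$-factors elsewhere. The summand $T_\emptyset = F^{\otimes n}$ is free, while every $T_S$ with $S \neq \emptyset$ is killed by $\xi^{\tilde a}$, hence has no free summand and is not faithful. As $S_n$ permutes the summands of equal size, both $F^{\otimes n}$ and $T_{\neq \emptyset} := \bigoplus_{S \neq \emptyset} T_S$ are $S_n$-stable. For any $\tau \in \mZ S_n$, this yields $\tau(M^{\otimes n}) = \tau(F^{\otimes n}) \oplus \tau(T_{\neq \emptyset})$. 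Because $\tau$ has integer coefficients and $F^{\otimes n}$ is free, $\tau(F^{\otimes n})$ is either zero or contains the image of some basis vector $\tau(e_i)$ with a coefficient not divisible by $p$, hence an element not annihilated by $\xi^{p-1}$; so $\tau(F^{\otimes n})$ is faithful whenever nonzero. On the other hand $\tau(T_{\neq \emptyset})$ is always killed by $\xi^{\tilde a}$. Therefore the image $\tau(M^{\otimes n})$ is faithful iff $\tau(F^{\otimes n}) \neq 0$, and the hypothesis becomes
\[
\tau \cdot F^{\otimes n} = 0 \;\Longrightarrow\; \tau \cdot T_{\neq \emptyset} = 0 \qquad \text{for all } n \ge 1,\ \tau \in \mZ S_n.
\]

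To derive a contradiction, I would construct, for a suitable $n$, an element $\tau \in \mZ S_n$ with $\tau F^{\otimes n} = 0$ but $\tau T_{\neq \emptyset} \neq 0$. The basic case is $F$ purely even ($b=0$): take $n = a+1$ and $\tau = \epsilon_n := \sum_{\sigma \in S_n} \mathrm{sign}(\sigma)\sigma$, the antisymmetrizer. Since $\wedge^{a+1} F = 0$ this kills $F^{\otimes n}$; on the other hand, applying $\epsilon_n$ to $v \otimes f_1 \otimes \cdots \otimes f_a$, where $v$ generates a summand $R/(\xi^d) \subseteq N$ and $f_1,\ldots,f_a$ is a basis of $F$, produces a nonzero antisymmetric tensor in $T_{\neq\emptyset}$ (verified by projecting to a rank-$(a+1)$ free quotient in which the $a+1$ factors become linearly independent). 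The case $F$ purely odd is dual: the symmetrizer $\sum_{\sigma \in S_{b+1}} \sigma$ acts on a purely-odd tensor power via the Koszul sign as the ordinary antisymmetrizer and hence vanishes on $F^{\otimes(b+1)}$, while it acts nontrivially on a tensor involving a generator of $N$. The main obstacle is the mixed case with $a, b \ge 1$, because then super-$\wedge^n F$ is nonzero for every $n$ and the antisymmetrizer fails. There one replaces $\epsilon_n$ by a super Young symmetrizer attached to a partition $\lambda$ lying in the $(\dim \bar M_0 | \dim \bar M_1)$-hook but outside the $(a|b)$-hook; the hook condition $\lambda_{a+1} > b$ together with $N \neq 0$ guarantees such $\lambda$ exists, and care must be taken with the possible non-semisimplicity of $K S_n$ when $n \ge p$, but these Young symmetrizers are naturally defined over $\mZ$ and continue to detect the discrepancy between the super-ranks of $F$ and $M$.
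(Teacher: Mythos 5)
Your reduction is sound as far as it goes: splitting off the free part $M=F\oplus N$, observing that $\tau(M^{\otimes n})=\tau(F^{\otimes n})\oplus\tau(T_{\neq\emptyset})$ with the second summand killed by $\xi^{\tilde a}$, and converting the hypothesis into ``$\tau F^{\otimes n}=0\Rightarrow\tau T_{\neq\emptyset}=0$'' is all correct, and your purely even and purely odd cases work (modulo the imprecision that $M$ has no free quotient of rank $a+1$ --- you must project onto a free module over $R/(\xi^d)$ instead, and you should track the parity of the generator $v$ of the torsion summand). But the mixed case $a,b\ge 1$, which is the generic one, is a genuine gap, not a detail. The two facts you invoke about super Young symmetrizers --- that $c_\lambda$ annihilates $F^{\otimes n}$ for $\lambda$ outside the $(a|b)$-hook, and that it acts nontrivially on $M^{\otimes n}$ for $\lambda$ inside the larger hook --- are theorems of Berele--Regev in characteristic $0$. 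In characteristic $p$ the detection direction simply fails for naive integral Young symmetrizers: already for a free even module of rank $1$ and $\lambda=(p)$, the row symmetrizer acts by $p!=0$. More generally the product of hook lengths $h_\lambda$ is typically divisible by $p$, $c_\lambda$ is not quasi-idempotent, and there is no available statement guaranteeing $c_\lambda T_{\neq\emptyset}\neq 0$. So the element $\tau$ you need in the mixed case is not produced by your argument.

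The paper avoids this entirely with a single, characteristic-$p$-adapted choice of symmetrizer: writing $M$ as a sum of $m$ even and $n$ odd cyclic summands, it applies the full antisymmetrizer of $S_{m+(p-1)n}$ to $M^{\otimes(m+(p-1)n)}$. A transposition of two equal even factors kills a term (char $\neq 2$), while the antisymmetrizer on $p$ copies of an odd cyclic summand gives $p!=0$ but on $p-1$ copies gives $(p-1)!\neq 0$; so the only surviving terms are those in which each even summand occurs exactly once and each odd summand exactly $p-1$ times, and the image is a nonzero cyclic module $R/(\xi^{\min_i d_i})$, which is faithful only if every $d_i=p$. This handles both parities simultaneously and needs no modular representation theory of $S_n$. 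If you want to salvage your own route, the fix in the mixed case is exactly this kind of element: an antisymmetrizer over a tensor power calibrated so that $(p-1)!\neq 0$ does the detecting, rather than a Young symmetrizer for a general partition.
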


\begin{proof}
We can write $M$ as 
$$M_1\oplus M_2\oplus \cdots \oplus M_m\oplus \Pi M_{m+1}\oplus \cdots\oplus \Pi M_{m+n}$$
where each $M_i$ is a cyclic $R$-module (in even degree) and $\Pi M_j$ denotes it is contained in odd degree. We have the induced decomposition for each tensor power $M^{\otimes_R^r}$, and consider $r=m+(p-1)n$.

We let $a=\sum(-1)^{\mathrm{sign}(\sigma)}\sigma$ be the skew symmetriser in $\mZ S_{m+(p-1)n}$. It follows easily that $a$ acts non-trivially on a given term in the canonical decomposition of $M^{\otimes (m+(p-1)n)}$ if and only if the term is a permutation of
$$M_1\otimes_R M_2\otimes_R \cdots \otimes_R M_m\otimes_R (\Pi M_{m+1})^{\otimes_R p-1}\otimes_R\cdots\otimes_R (\Pi M_{m+n})^{\otimes_R p-1}. $$
Hence the image of the action of $a$ is a nonzero module which can only be faithful if each $M_i$ is so. This shows that $M$ is free.
\end{proof}

\begin{proof}[Proof of Theorem \ref{th:superabs}]
Let $\bT$ be a super-tannakian category over $k$, where $k$ has characteristic $p>2$. 
%If for a field extension $K/k$, $\bT_K$ is a tensor category, then it is automatically super-tannakian. Indeed, if $\bT$ fibres over $R$, then $\bT_K$ fibres over $K\otimes_kR$.
By Proposition~\ref{prop:abs}, iteration and Example~\ref{ex:ext}(1), it suffices to show that $\bT_l$ is a tensor category whenever $l$ is purely inseparable over $k$ of degree $p$.
Let $K/k$ be a field extension such that there exists a faithful exact monoidal $k$-linear functor $\omega:\bT\to K-\sMod$. 
Consider the corresponding faithful exact monoidal $l$-linear functor $\omega_l:\bT_l\to (l\otimes_k K)-\sMod$ sending objects $X$ in $\bT_l$ to $\omega(X)$. The ring $l\otimes_k K$ is either a field or of the form $R:=K[\xi]/(\xi^p)$. In the first case, the theorem is clear.

Otherwise, let $X$ be a nonzero object in  $\bT_l$. %We have a canonical $l$-module structure on $X^\ast$, where we take a dual in $\bT$.  The coevaluation in $\bT$ then 
The $l$-module structure on $X$ induces an $l$-linear morphism $a:l\otimes \unit\to X \otimes X^\ast$, where $X^\ast$ is the dual of $X$ in $\bT$ and $\unit$ the tensor unit of $\bT$. Since $\unit$ is simple in~$\bT$, the kernel of $a$ in $\bT$ has the form $V\otimes\unit$ for some $k$-vector space $V\subset l$. Since $a$ is not zero, it follows that $V\not=l$ and hence cannot admit an $l$-module structure unless $V=0$. Hence $a$ is a monomorphism in $\bT_l$, which is sent by $\omega_l$ to a monomorphism 
$$R\to \omega_l(X)\otimes_R \omega_l(X^\ast)$$ in $R-\sMod$. We conclude that $\omega_l(X)\otimes-$ is a faithful $R$-module for every nonzero object $X$ in $\bT_l$. By applying this observation to all images of actions of $\mZ S_r$ on $X^{\otimes r}$, Lemma~\ref{lem:super} shows that $\omega_l(X)$ is a free $R$-module for all $X$ in $\bT_l$, in particular the endofunctor $X\otimes-$ is exact. By~Proposition~\ref{PropExtSca}, we see that $\bT_l$ is a tensor category.
\end{proof}
%$\bT_l$ has internal morphisms $\uHom_l(X,Y)$ given as the equalisers
%$$\uHom_l(X,Y)\to\uHom(X,Y)\rightrightarrows\uHom(A\otimes X,Y),$$ 
%and $X\in\bT_l$ is rigid if and only if the canonical morphism
%$$\uHom_l(X,\unit)\otimes Y\to \uHom_l(X,Y)$$
%is an isomorphism for all $Y\in\bT_l$, see~\cite[Proposition 2.3]{Del90}.

\subsection{Semisimple tensor categories}
In the terminology of \cite[Appendix A.3]{AK} the following theorem states that semisimple artinian tensor categories are automatically separable.

\begin{theorem}\label{PropSSE}
	Let $\bT$ be a semisimple artinian tensor category over $k$. Every endomorphism algebra in $\bT$ is separable over $k$. In particular, for every field extension $K/k$ it follows that $K\stackrel{.}{\otimes}_k\bT$ is a semisimple tensor category and equals $\bT_K$. Consequently, $\bT$ is absolute.
\end{theorem}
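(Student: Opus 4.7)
The plan is to prove the theorem in two stages: first establishing separability of every endomorphism algebra in $\bT$, and then deducing the remaining consequences via Proposition~\ref{PropExtSca}. For the first stage, I would reduce to the case of a simple object $X \in \bT$ with $D_X := \End_\bT(X)$ a finite-dimensional division $k$-algebra; this uses that separability is preserved under products and matrix algebras, applied to the Wedderburn-type decomposition of $\End_\bT(Y)$ for arbitrary $Y$.

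The key categorical step is to exhibit the internal endomorphism algebra $A := X \otimes X^*$ (with multiplication $\mu_A = \id_X \otimes \ev_X \otimes \id_{X^*}$ and unit $\eta_A = \co_X$) as a separable algebra in $\bT$. Since $\bT$ is semisimple, the evaluation $\ev_X : X^* \otimes X \to \unit$ is a split epimorphism and admits a section $\sigma : \unit \to X^* \otimes X$ with $\ev_X \circ \sigma = \id_\unit$. Setting
\[
e := (\id_X \otimes \sigma \otimes \id_{X^*}) \circ \co_X \,:\, \unit \to A \otimes A,
\]
a direct zig-zag verification shows $\mu_A \circ e = \eta_A$ and the centrality condition $(\mu_A \otimes \id_A) \circ (\id_A \otimes e) = (\id_A \otimes \mu_A) \circ (e \otimes \id_A)$; both composites equal $\id_X \otimes \sigma \otimes \id_{X^*}$ as maps $A \to A \otimes A$, since the capping in each side collapses one of the outer $\co_X$-cups into the identity on the corresponding input strand via the rigidity axioms. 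Hence $e$ is a separability idempotent and $A$ is separable in $\bT$.

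The delicate part is to transfer this to separability of the $k$-algebra $D_X \simeq \Hom_\bT(\unit, A)$. The lax monoidal functor $\Hom_\bT(\unit, -) : \bT \to \Vecc_k$ preserves algebras, but the inclusion $D_X \otimes_k D_X \hookrightarrow \Hom_\bT(\unit, A \otimes A)$ is generally not surjective, so the image of $e$ need not lie in $D_X \otimes_k D_X$. I would address this by exploiting the identification $\Hom_\bT(\unit, A \otimes A) \simeq \End_\bT(X^* \otimes X)$ under which $e$ corresponds to the projector $\sigma \circ \ev_X$ onto the copy of $\unit$ inside $X^* \otimes X$ determined by $\sigma$, combined with a Morita-type argument using the semisimplicity of the category of $A$-modules in $\Ind\bT$ (which itself follows from separability of $A$). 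This transfer is the main obstacle of the proof, and an alternative route is simply to invoke the general principle from \cite{AK} that semisimple rigid monoidal categories are separable in this sense.

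Once separability of each $D_X$ is established, the rest is formal: for any field extension $K/k$, the ring $K \otimes_k D_X = \End_{K \stackrel{.}{\otimes} \bT}(K \otimes X)$ is semisimple by the defining property of separability, so every endomorphism algebra in $K \stackrel{.}{\otimes} \bT$ is semisimple. The pseudo-abelian category $K \stackrel{.}{\otimes} \bT$ is thus semisimple, hence automatically abelian with exact tensor product, making it a tensor category over $K$. By Proposition~\ref{PropExtSca}, this forces $K \stackrel{.}{\otimes} \bT = \bT_K$, and absoluteness of $\bT$ follows by definition.
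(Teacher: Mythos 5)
Your reduction to simple objects and your construction of the separability idempotent $e$ for the internal algebra $A=X\otimes X^{\ast}$ are both correct, and your final paragraph deducing the consequences from separability of the $\End(X)$ matches the paper (which invokes Jannsen's lemma that a pseudo-tensor category with finite dimensional semisimple endomorphism algebras is a semisimple tensor category). But the step you flag as ``the delicate part'' is not merely delicate --- it is the entire content of the theorem, and your proposal does not contain an idea that closes it. The idempotent $e$ exists in \emph{any} semisimple rigid category, in particular it would exist in a hypothetical semisimple tensor category containing a simple $X$ with $\End(X)$ a purely inseparable field extension of $k$ (the case one must rule out, e.g.\ $k[x]/(x^p-\lambda)$ with $\lambda\notin k^p$). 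Since $e$ lives in $\Hom(\unit,A\otimes A)\simeq\Hom(X,X\otimes X^\ast\otimes X)$, which is strictly larger than $\End(X)\otimes_k\End(X)$, internal separability of $A$ carries no information about the $k$-algebra structure of $\End(X)$, and no Morita-style manipulation of $A$-modules in $\Ind\bT$ can extract it. Your fallback of citing \cite{AK} is also unavailable: the paper's preamble to the theorem explicitly says that, in the terminology of \cite[Appendix A.3]{AK}, the theorem \emph{is} the statement that semisimple artinian tensor categories are separable --- i.e.\ it is a new result being proved here, not a principle one can import.

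The paper's mechanism is entirely different and is worth recording. One first reduces to $k$ separably closed (separability of a $k$-algebra can be tested after a separable base change, and Jannsen's lemma handles the separable extensions). Over a separably closed field, every finite dimensional division algebra is a purely inseparable field extension $E_X/k$ (Lemma~\ref{LemDiv}); since such extensions have trivial automorphism group, the isomorphisms $X\simeq X^{\ast\ast}$ assemble into a natural isomorphism $g:\id\Rightarrow(-)^{\ast\ast}$. The categorical trace $\alpha_X=\Tr^L(g_X\circ-)$ kills nilpotents (Lemma~\ref{LemTr}), and the element $ab\otimes\id_X-a\otimes b\in\End(X\otimes X)$ is nilpotent because $b^{p^n}\in k$; this yields the multiplicativity relation $\alpha_X(a)\alpha_X(b)=\alpha_X(ab)\alpha_X(\id_X)$ and hence a $k$-algebra homomorphism $E_X\to k$, forcing $E_X=k$. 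You would need to supply an argument of comparable strength --- specifically one that sees the arithmetic of the ground field --- to complete your route.
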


We start the proofs with the following lemma, which is an immediate extension of \cite[Corollaire~3.6]{Deligne}.
\begin{lemma}\label{LemTr}
	Let $\bT$ be a tensor category equipped with a natural isomorphism $\psi:\id\stackrel{\sim}{\Rightarrow}(-)^{\ast\ast}$ (not necessarily of monoidal functors). For every $Y\in\bT$, the morphism
	$$\alpha_Y:=\Tr^L(\psi_Y\circ -):\,\End(Y)\to k$$
	(see \ref{trace}) sends nilpotent morphisms to zero.
\end{lemma}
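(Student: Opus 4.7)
The plan is to reduce the lemma to a cyclicity property of $\alpha$, and then induct on the nilpotency index using the image factorisation in $\bT$ (which is available since $\bT$ is an abelian category).

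First I would establish cyclic invariance of $\alpha$: for any $g:Y\to Z$ and $h:Z\to Y$ in $\bT$, the equality $\alpha_Y(h\circ g)=\alpha_Z(g\circ h)$ holds. Using naturality of $\psi$ to replace $\psi_Z\circ g$ by $g^{\ast\ast}\circ\psi_Y$, this reduces to the identity $\Tr^L(\phi\circ g)=\Tr^L(g^{\ast\ast}\circ\phi)$ for the auxiliary morphism $\phi:=\psi_Y\circ h:Z\to Y^{\ast\ast}$. Unpacking the definition of $\Tr^L$, this in turn follows from the standard compatibility of (co)evaluation with the double-dual functor, namely
$$(g\otimes Y^\ast)\circ\co_Y=(Z\otimes g^\ast)\circ\co_Z\quad\mbox{and}\quad \ev_{Y^\ast}\circ(Y^{\ast\ast}\otimes g^\ast)=\ev_{Z^\ast}\circ(g^{\ast\ast}\otimes Z^\ast).$$
Notably, no monoidality of $\psi$ is used; only its naturality.

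Given cyclicity, I would argue by induction on the smallest $n\ge 1$ with $f^n=0$. The case $n=1$ is trivial. For $n\ge 2$, take the image factorisation $f=j\circ\pi$ with $\pi:Y\tto\im(f)$ and $j:\im(f)\hookrightarrow Y$, so cyclicity gives $\alpha_Y(f)=\alpha_{\im(f)}(\pi\circ j)$. A routine computation shows $f^n=j\circ(\pi\circ j)^{n-1}\circ\pi$, and since $j$ is monic and $\pi$ is epic the vanishing of $f^n$ forces $(\pi\circ j)^{n-1}=0$. Hence $\pi\circ j$ is nilpotent of strictly smaller index, and the induction hypothesis closes the argument.

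The main obstacle I anticipate is keeping the cyclicity proof clean without a pivotal assumption on $\psi$; the whole point is that bare naturality suffices, and all the coherence is pushed onto the standard duality identities for $(-)^\ast$ and $(-)^{\ast\ast}$. Everything else in the argument is formal.
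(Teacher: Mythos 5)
Your proof is correct, but it takes a genuinely different route from the paper's. The paper proves additivity of $\alpha$ along short exact sequences preserved by $f$ (via \cite[Proposition~4.7.5]{EGNO}, using naturality of $\psi$ to see that $\psi_Y\circ f$ restricts compatibly to subobjects), and then evaluates $\alpha_Y(f)$ on a finite filtration of $Y$ (e.g.\ by the images of the powers of $f$) on which the associated graded morphism of $f$ vanishes. You instead prove the cyclic property $\alpha_Y(h\circ g)=\alpha_Z(g\circ h)$ --- which is exactly \cite[Proposition~4.7.3]{EGNO} combined with naturality of $\psi$, and your verification via the two duality identities is correct --- and then induct on the nilpotency index using the epi--mono factorisation $f=j\circ\pi$, noting that $f^n=j\circ(\pi\circ j)^{n-1}\circ\pi$ forces $(\pi\circ j)^{n-1}=0$. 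Both arguments use only the naturality (not monoidality) of $\psi$ and the abelianness of $\bT$ (for the image factorisation, respectively the filtration); yours leans on the cyclicity of the categorical trace where the paper leans on its additivity in exact sequences, which is the slightly more delicate input. Your induction is on $n$ with the statement quantified over all objects (since the inductive step passes from $\End(Y)$ to $\End(\im f)$), which is fine but worth stating explicitly.
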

\begin{proof}
	Consider first a short exact sequence $X\hookrightarrow Y\tto Z$ and $f\in\End(Y)$ such that $f|_{X}\subset X$. Naturality of $\psi$ implies $\psi_Y\circ f|_X\subset X^{\ast\ast}$. As a direct  consequence of \cite[Proposition~4.7.5]{EGNO} we therefore  find
	$$\alpha_Y(f)\;=\;\alpha_X(f|_X)+\alpha_Z(f|_Z).$$
	By iteration this observation extends to longer chains of subobjects of $Y$. The conclusion now follows from considering a filtration on $Y$ such that the graded morphism associated to the nilpotent morphism $f$ is zero.
\end{proof}
%We will abbreviate separably algebraically closed to separably closed.

By Corollary~\ref{cor:perf}, it is justified to assume that $k$ is a field of characteristic $p>0$ for the remainder of the section. The following lemma is well-known.

\begin{lemma}\label{LemDiv}
	If $k$ is a separably closed field, then  every finite dimensional division algebra over $k$ is a finite (purely inseparable) field extension of $k$.
\end{lemma}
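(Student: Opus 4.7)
The plan is to show two things: that every element of $D$ is purely inseparable over $k$, and that $D$ is commutative (i.e.\ equal to its centre). Fix the characteristic of $k$ to be $p$; the case $p=0$ forces $k$ algebraically closed, in which case the result is the classical Frobenius-style statement, but the argument below goes through uniformly.

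First I would pick any $d\in D$ and consider the subring $k[d]\subset D$. Since $D$ is a division algebra and $k[d]$ is a finite dimensional commutative integral domain over $k$, it is a field. It is then a finite field extension of $k$, and because $k$ is separably closed it must be purely inseparable, so there exists $n$ with $d^{p^n}\in k$. In particular, every subfield of $D$ containing $k$ is purely inseparable over $k$.

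Next I would apply this to the centre $Z:=Z(D)$, which is a finite purely inseparable extension of $k$. A short argument shows that $Z$ is itself separably closed: given $\alpha$ algebraic and separable over $Z$, the minimal polynomial of $\alpha$ over $k$ factors as $m_{\mathrm{sep}}(x^{p^s})$ with $m_{\mathrm{sep}}$ separable; since $k$ is separably closed, $m_{\mathrm{sep}}$ is linear, so $\alpha^{p^s}\in k\subset Z$. Thus $\alpha$ is purely inseparable over $Z$ and separable over $Z$, hence in $Z$.

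Finally I would view $D$ as a finite dimensional central division algebra over the separably closed field $Z$ and conclude $D=Z$, so $D$ is a field and thus a finite purely inseparable extension of $k$. The cleanest justification is that the Brauer group of a separably closed field vanishes; alternatively one invokes the classical fact that every central simple algebra admits a maximal separable subfield whose degree over the centre equals the degree of the algebra, which over a separably closed $Z$ forces $\dim_Z D=1$. The main obstacle is this last step: all the purely field-theoretic work is routine, but to upgrade from ``every element of $D$ is purely inseparable over $k$'' to ``$D$ is commutative'' one genuinely needs some non-trivial input about central simple algebras over separably closed fields.
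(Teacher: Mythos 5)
Your proposal is correct, but it takes a genuinely different route from the paper. You and the paper agree on the first (routine) step: every $d\in D$ generates a finite, hence purely inseparable, field extension of $k$, so $d^{q}\in k$ for some $p$-power $q$. Where you diverge is in upgrading this to commutativity. You pass to the centre $Z$, check that $Z$ is again separably closed, and then invoke the vanishing of $\mathrm{Br}(Z)$ --- or equivalently K\"othe's theorem that a central simple algebra contains a maximal subfield separable over the centre --- to force $D=Z$. That input is genuine and external (indeed the cohomological description of the Brauer group itself rests on the existence of separable splitting fields), and you are right to flag it as the crux. The paper instead gives a short self-contained computation: if $a\in D$ is non-central, then $a^{q}\in k$ gives $\ad_a^{q}=\ad_{a^{q}}=0$ (since $\ad_a=\mathrm{L}_a-\mathrm{R}_a$ with commuting summands), so nilpotence of the nonzero derivation $\ad_a$ produces $u$ with $w:=[a,u]\neq 0$ and $[a,w]=0$; setting $v:=w^{-1}u$ gives $[a,v]=1$, and the characteristic-$p$ identity $(av)^{r}=a^{r}v^{r}-av$ for $p$-powers $r$ forces $av\in k$ for large $r$, contradicting $[a,v]=1$. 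Your argument is shorter if one takes the structure theory of central simple algebras as known and places the lemma in its standard context; the paper's argument requires nothing beyond the pure-inseparability observation and an elementary commutator identity, which keeps the paper self-contained.
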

\begin{proof}
	In order to arrive at a contradiction, we consider a division algebra $D$ over $k$ with non-central element $a\in D$. Since $k(a)/k$ is purely inseparable, we know that $a^q\in k$ for some power $q$ of $p$. It then follows that $\ad_a^q=0$, with $\ad_a=[a,-]:D\to D$.
	Therefore there exists $u\in D$ with $[a,u]=w\not=0$ and $[a,w]=0$. We set $v:=w^{-1}u$, so that $[a,v]=1$. It then follows that for every power $r$ of $p$, we have
	$$(av)^{r}\;=\;a^rv^r - av. $$
	For high enough $r$, two out of three terms will be in $k$. Thus also $av\in k$, which contradicts $[a,v]=1$. Hence $D$ is commutative which concludes the proof.
\end{proof}

\begin{proof}[Proof of Theorem~\ref{PropSSE}]
	Recall that, for an arbitrary extension $K/k$, a $k$-algebra $A$ is separable if and only if the $K$-algebra $K\otimes_kA$ is separable. Moreover, if a $k$-algebra is semisimple and $K/k$ is a separable field extension, then $K\otimes_kA$ is also semisimple.
	Since every pseudo-tensor category with finite dimensional semisimple endomorphism algebras is a semisimple tensor category (see~\cite[Lemma 2]{Jannsen}), we know $K\stackrel{.}{\otimes}_k\bT$ is a semisimple tensor category whenever $K/k$ is separable.
	So, without loss of generality, we may assume that $k$ is separably closed.
	
	Since $\bT$ is semisimple, for every simple object $X$ there is an isomorphism $X\xrightarrow{\sim}X^{\ast\ast}$, see for instance~\cite[Proposition~4.8.1]{EGNO}. We fix such a $g_X:X\xrightarrow{\sim}X^{\ast\ast}$. By Lemma~\ref{LemDiv}, the division algebra $E_X:=\End(X)$ is a finite (purely inseparable) field extension of $k$. As for any purely inseparable extension, we have $\Aut(E_X|k)=\{1\}$. In particular, the automorphism
	$$a\,\mapsto\, g_X^{-1}\circ  a^{\ast\ast}\circ g_X$$
	of $E_X$ is the identity. This means we can collect the isomorphisms $g_X$ and extend them to direct sums in order to create a natural isomorphism $g:\id\stackrel{\sim}{\Rightarrow}(-)^{\ast\ast}$. We consider the morphisms $\alpha_X$ from Lemma~\ref{LemTr}. We claim that, for $a,b\in E_X$, we have
	$$\alpha_X(a)\alpha_X(b)\,=\,\alpha_X(a\circ b)\alpha_X(\id_X).$$
	Indeed, we can write the difference of the two terms as
$$\Tr^L((g_X\circ a\circ b )\otimes g_X\,-\, (g_X\circ a)\otimes (g_X\circ b))
=\alpha_{X\otimes X}(g_{X\otimes X}^{-1}\circ (g_X\otimes g_X)\circ (ab\otimes \id_X -a\otimes b)).$$
	%\begin{eqnarray*}
	%	&&\Tr^L((g_X\circ a\circ b )\otimes g_X\,-\, (g_X\circ a)\otimes (g_X\circ b))\\
	%	&=&\alpha_{X\otimes X}(g_{X\otimes X}^{-1}\circ (g_X\otimes g_X)\circ (ab\otimes \id_X -a\otimes b)).
	%\end{eqnarray*}
	By naturality of $g$, the morphisms $g_{X\otimes X}^{-1}\circ (g_X\otimes g_X)$ and $(ab\otimes \id_X -a\otimes b)$ commute.
	Since $E_X/k$ is purely inseparable, we know $b^{p^n}\in k$ for some $n>0$. Consequently $(ab\otimes \id_X -a\otimes b)$ is nilpotent and hence so is the morphism in the argument of $\alpha_{X\otimes X}$. By Lemma~\ref{LemTr}, the displayed expressions are zero, proving the claim.
	
	Since $\bT$ is semisimple, it is clear that $\Tr^L:E_X\to k$ is nonzero and hence $\alpha_X(\id_X)\not=0$. We thus find a $k$-algebra morphism
	$$f: E_X\to k,\quad a\mapsto \alpha_X(a)/\alpha_X(\id_X),$$
	showing that $E_X=k$. This concludes the proof.
\end{proof}

%%%%%%%%%%%%%%%%%%%%%%%%%%%%%%%%%%%%%%%%%%%%%%%%%%%%%%%%%%%%%%%%%%%%%%%%%%%%%%%%%%%%%%%%%%%%%%%%%%%%%%%%%%%%%%

\section{Deligne's tensor product of tensor categories}\label{SecDel}

\subsection{Connection with abelian envelopes}

\subsubsection{}\label{blablaDTP} Let $\bA$ and $\bB$ be essentially small $k$-linear abelian categories. In \cite[\S 5.1]{Del90}, Deligne defines their tensor product $\bA\boxtimes\bB$ as a $k$-linear abelian category with a universal property. The Deligne tensor product does not always exist, see \cite[Corollary~20]{LF}, but it does when $\bA$ and $\bB$ are artinian, see~\cite{Del90} and~\cite[Proposition 1.11.2]{EGNO}.

If $\bT_1$ and $\bT_2$ are tensor categories for which $\bT_1\boxtimes\bT_2$ exists, %monoidal with right exact and $k$-linear tensor product, 
it follows by \cite[\S 5.16]{Del90} that $\bT_1\boxtimes\bT_2$ admits an essentially unique right exact $k$-linear tensor product such that the functor $\bT_1\stackrel{.}{\otimes}\bT_2\to \bT_1\boxtimes\bT_2$ is monoidal. Here we denote by $\bT_1\stackrel{.}{\otimes}\bT_2\to \bT_1\boxtimes\bT_2$ the idempotent completion of the ordinary tensor product of $k$-linear categories $\bT_1\otimes_k\bT_2$ (see Section~\ref{copr}), to obtain a pseudo-tensor category. When we say that `$\bT_1\boxtimes \bT_2$ is a tensor category', we mean with respect to this monoidal structure. 
If $k$ is perfect and $\bT_1$ and $\bT_2$ are artinian, then $\bT_1\boxtimes\bT_2$ is a tensor category by \cite[Proposition~5.17]{Del90} or~\cite[Proposition 4.6.1]{EGNO}. For arbitrary fields, the question whether $\bT_1\boxtimes\bT_2$ is a tensor category remains open.

\subsubsection{}\label{I1I2I}  Let $\bT_1$ and $\bT_2$ be tensor categories over $k$. We will write $\cV\subset\cU^{ex}(\bT_1\stackrel{.}{\otimes}\bT_2)$ for the class of all morphisms of the form $(u,\id):(U,\unit)\to (\unit,\unit)$ or $(\id,v):(\unit,V)\to (\unit,\unit)$ with $u\in\cU(\bT_1)$ and $v\in\cU(\bT_2)$. It follows from Example~\ref{ExVMon}(2) that $\cV$ is permutable.

\begin{prop}\label{PropDelTP}
Consider $\bT_1$, $\bT_2$ and $\cV$ as in \ref{I1I2I}. The following are equivalent:
\begin{enumerate}
\item The Deligne tensor product $\bT_1\boxtimes\bT_2$ exists (as an abelian category) and is a tensor category.
%item The coproduct of $\bT_1$ and $\bT_1$ in $\sPTens$ exists and the canonical functor from $\bT_1\otimes\bT_2$ to the coproduct is full with strongly generating essential image, and $\cV$ is dense. 
\item The pseudo-tensor category $\bT_1\stackrel{.}{\otimes}\bT_2$ admits an abelian envelope in which it is strongly generating, and $\cV$ is dense.
\item The category $\Sh(\bT_1\stackrel{.}{\otimes}\bT_2,\cT_\cV)$ is the ind-completion of a tensor category.
\end{enumerate}
Furthermore, the abelian envelope from (2), and the tensor category from (3), is then precisely $\bT_1\boxtimes\bT_2$.
\end{prop}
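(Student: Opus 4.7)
The plan is to mirror the proof of Proposition~\ref{PropExtSca} closely, replacing ``extension of scalars'' by ``Deligne tensor product'' throughout.

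First, since $\cV$ is permutable by~\ref{I1I2I}, the equivalence $(2)\Leftrightarrow(3)$ together with the identification of the abelian envelope in (2) as the full subcategory of compact objects in $\Sh(\bT_1\stackrel{.}{\otimes}\bT_2,\cT_\cV)$ follows directly from~\cite[Corollary~4.4.2]{PreTop} and~\cite[Theorem~4.4.1]{PreTop}, exactly as in the opening paragraph of the proof of Proposition~\ref{PropExtSca}. Hence both (2) and (3) produce the same tensor category, and the remaining task is to identify it with $\bT_1\boxtimes\bT_2$.

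Second, I would give an alternative description of $\Sh(\bT_1\stackrel{.}{\otimes}\bT_2,\cT_\cV)$ as the category $\bB$ of $k$-linear biadditive functors $\bT_1^{\op}\times\bT_2^{\op}\to\Vecc^{\infty}$ that are left exact in each variable separately. The key observation is that, by the very construction of $\cV$, the sheaf condition on a functor $F:(\bT_1\stackrel{.}{\otimes}\bT_2)^{\op}\to\Vecc^{\infty}$ decouples into two separate left-exactness conditions, one for each tensor factor. Applying Corollary~\ref{CorCan} variable-by-variable, together with~\cite[Example~3.6.4 and Theorem~3.6.2]{PreTop}, yields the claimed equivalence. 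Uniqueness of the monoidal structure from~\ref{SecShD} then matches the monoidal structure on $\Sh(\bT_1\stackrel{.}{\otimes}\bT_2,\cT_\cV)$ with the obvious convolution-type monoidal structure on $\bB$ induced from the tensor products of $\bT_1$ and $\bT_2$.

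Third, the category $\bB$ is the natural ``large'' candidate for the Deligne tensor product. If $\bT_1\boxtimes\bT_2$ exists and is a tensor category, then its ind-completion satisfies a cocontinuous biadditive left-exact universal property which also characterises $\bB$, so $\Ind(\bT_1\boxtimes\bT_2)\simeq\bB$ as biclosed Grothendieck monoidal categories. This proves $(1)\Rightarrow(3)$ and identifies the tensor category in (3) with $\bT_1\boxtimes\bT_2$. Conversely, if (3) holds and $\bT$ denotes the resulting tensor category, then $\bT$ is abelian and $\bT_1\stackrel{.}{\otimes}\bT_2\hookrightarrow\bT$ is strongly generating. To deduce (1), I would verify Deligne's universal property for $\bT$: a right exact $k$-bilinear functor $\bT_1\times\bT_2\to\bA$ to a $k$-linear abelian category $\bA$ produces, via $\bA^{\op}$-indexed families of bifunctors, a functor $\bA^{\op}\to\bB\simeq\Ind\bT$, from which one extracts a right exact functor $\bT\to\bA$ extending the given bilinear one.

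The main obstacle will be the final step of the third paragraph: bridging the abelian envelope universal property and Deligne's universal property. The abelian envelope property in Definition~\ref{DefAbEnv} is only stated for faithful monoidal functors into \emph{tensor category} targets, whereas Deligne's product is universal for right exact bilinear functors into \emph{arbitrary} abelian $k$-linear targets. The cleanest route is to formulate the relevant universal property first at the level of Grothendieck categories (using the biadditive bifunctor description of $\bB$), where both pictures coincide, and then restrict to compact objects to descend to the tensor categories on both sides.
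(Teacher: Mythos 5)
Your steps for $(2)\Leftrightarrow(3)$ and for identifying $\Sh(\bT_1\stackrel{.}{\otimes}\bT_2,\cT_\cV)$ with the category of bifunctors $\bT_1^{\op}\times\bT_2^{\op}\to\Vecc^\infty$ left exact in each variable are correct and coincide with the paper's argument: the decoupling of the sheaf condition for this particular $\cV$ is exactly what Corollary~\ref{CorCan} combined with \cite[Theorem~3.6.2]{PreTop} gives, and this bifunctor category is the Kelly tensor product of $\Ind\bT_1$ and $\Ind\bT_2$ as cocomplete $k$-linear categories.

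The genuine gap is in your third paragraph, and you have located it yourself: the entire content of $(1)\Leftrightarrow(3)$ is the comparison between Deligne's universal property (right exact bilinear functors into \emph{arbitrary} $k$-linear abelian targets) and the compact objects of the Kelly tensor product. Neither direction of your sketch is carried out. For $(1)\Rightarrow(3)$, the claim that $\Ind(\bT_1\boxtimes\bT_2)$ ``satisfies a cocontinuous biadditive left-exact universal property which also characterises $\bB$'' is not a formal consequence of Deligne's definition; and for $(3)\Rightarrow(1)$, extending a right exact bilinear functor $\bT_1\times\bT_2\to\bA$ to a cocontinuous functor $\bB\to\Ind\bA$ and then checking that it restricts to a right exact functor on compact objects landing in $\bA$ requires knowing how the compact objects of $\bB$ are presented by objects $X_1\boxtimes X_2$ and is not addressed. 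This is precisely the content of \cite[Remark~10 and Theorem~18]{LF}, which states that $\bT_1\boxtimes\bT_2$ exists if and only if the subcategory of compact objects of the Kelly tensor product is abelian, and is then equivalent to that subcategory; the paper closes the gap by citing this result and then only has to match the monoidal structures via their uniqueness (\ref{blablaDTP} and \ref{SecShD}). Your plan, if executed in full, would amount to reproving L\'opez Franco's theorem; as written, the key step is asserted rather than proved, so you should either supply that argument or invoke \cite{LF} as the paper does.
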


\begin{proof}
%Statements (2) and (3) are equivalent by Corollary~\ref{CorDelTP}.
That (2) and (3) are equivalent is a special case of \cite[Corollary~4.4.2]{PreTop}.

We set  $\bD:=\bT_1\stackrel{.}{\otimes}\bT_2$. By Corollary~\ref{CorCan} and \cite[Theorem~3.6.2]{PreTop}, the Kelly tensor product of $\Ind\bT_1$ and $\Ind\bT_2$ as cocomplete $k$-linear categories is precisely $\Sh(\bD,\cT_\cV)$. By \cite[Remark~10 and Theorem~18]{LF}, $\bT_1\boxtimes\bT_2$ exists precisely when the category of compact objects in $\Sh(\bD,\cT_\cV)$ is abelian, and is then equivalent to the latter subcategory. That these equivalences are monoidal follows from uniqueness of the right exact tensor product on $\bT_1\boxtimes\bT_2$, see \ref{blablaDTP}, and the cocontinuous tensor product on $\Sh(\bD,\cT_\cV)$, see~\ref{SecShD}. The equivalence between (1) and (3) follows.
By~\cite[Theorem~4.4.1]{PreTop}, the abelian envelope in (2) is precisely $\bT_1\boxtimes\bT_2$.
 \end{proof}

\begin{remark}
The density of $\cV$, referred to in \ref{PropDelTP}(2), in this case states that for every morphism $(X_1,X_2)\to(\unit,\unit)$ in $\bT_1\otimes \bT_2$, there exist nonzero morphisms $a:Y_1\to\unit $ in $\bT_1$ and $b:Y_2\to\unit$ in $\bT_2$ for which there is a commutative diagram in $\bT_1\otimes \bT_2$
$$\xymatrix{
(X_1,X_2)\ar[rr]&&(\unit,\unit)\\
(Y_1,Y_2)\ar[u]\ar[rru]_{a\otimes b}.
}$$ 
\end{remark}
 
 %\subsubsection{} %Assume that $\bT_1,\bT_2$ are symmetric tensor categories.
%Consider \footnote{faithful? or nonzero...} injective objects $I_1\in\Ind\bT_1$ and $I_2\in\Ind\bT_2$, which we can realise as ind-objects as $\varinjlim I_1^\alpha$ and $\varinjlim I_2^\beta$. Then we consider $I:=\varinjlim (I_1^\alpha,I_2^\beta)$ in $\Ind(\bT_1\stackrel{.}{\otimes}\bT_2)$.

%If for every morphism $f$ in $\bT_1\stackrel{.}{\otimes}\bT_2$, the morphism $I\otimes f$ in $\Ind(\bT_1\stackrel{.}{\otimes}\bT_2)$ is split, then \ref{PropDelTP}(4), so also the other properties in \ref{PropDelTP}, hold by Theorem~\ref{TheoremV}. It is clear that $I\otimes f$ is split for morphisms $f$ in the Zariski closed subspace of pure tensors $f=f_1\otimes f_2$ in $\bT_1(X_1,X_2)\otimes_k \bT_2(Y_1,Y_2)$.

\subsection{Deligne's tensor product for absolute tensor categories}

\begin{theorem}
If $\bT_1$ and $\bT_2$ are absolute artinian tensor categories over $k$, then their Deligne tensor product $\bT_1\boxtimes\bT_2$ is an absolute artininan tensor category.
\end{theorem}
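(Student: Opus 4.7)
The plan is to reduce everything to the algebraically closed case via Proposition~\ref{prop:abs}. Since $\bT_1$ and $\bT_2$ are artinian, $\bT_1\boxtimes\bT_2$ already exists as an artinian abelian $k$-linear category with right-exact tensor product (by~\cite[Proposition~1.11.2]{EGNO}); the substance of the theorem is therefore that this monoidal structure makes $\bT_1\boxtimes\bT_2$ a tensor category and that $(\bT_1\boxtimes\bT_2)_K$ is a tensor category for every extension $K/k$. By Proposition~\ref{prop:abs} the second point reduces to $K=\overline{k}$.

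The whole argument runs through a single auxiliary object, the Deligne product $\bS:=(\bT_1)_{\overline{k}}\boxtimes(\bT_2)_{\overline{k}}$. By absoluteness of $\bT_i$ and perfectness of $\overline{k}$, the category $\bS$ exists as an artinian tensor category over $\overline{k}$ by~\cite[Proposition~5.17]{Del90}. Composing the faithful base change $\bT_1\stackrel{.}{\otimes}\bT_2\to(\bT_1)_{\overline{k}}\stackrel{.}{\otimes}(\bT_2)_{\overline{k}}$ (faithful because $-\otimes_k\overline{k}$ is faithful on Hom spaces) with the fully faithful embedding into $\bS$ yields a faithful $k$-linear monoidal functor $F:\bT_1\stackrel{.}{\otimes}\bT_2\to\bS$. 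Because $\bS$ is a tensor category, Proposition~\ref{PropTEx} ensures $F$ takes each $\sigma_u\otimes X$ with $u\in\cV$ (see~\ref{I1I2I}) to an exact sequence in $\Ind\bS$, and hence extends uniquely to a cocontinuous $k$-linear monoidal functor $\widetilde F:\bC\to\Ind\bS$, where $\bC:=\Sh(\bT_1\stackrel{.}{\otimes}\bT_2,\cT_\cV)$.

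To establish that $\bT_1\boxtimes\bT_2$ is a tensor category I would apply Proposition~\ref{PropInd} to $\bC$ with the canonical $Z:\bT_1\stackrel{.}{\otimes}\bT_2\to\bC$; conditions (a) and (b) follow from Lemma~\ref{LemShD}, noting $\End(\unit)=k$ and that the exactness hypothesis of Lemma~\ref{LemShD}(5) follows from $\cV\subseteq\cU^{ex}$ together with flatness of $k$-tensor products. The candidate for condition~(2) is $\widetilde F$, and the main obstacle is verifying that it is faithful and exact. Faithfulness reduces on compact objects, via the presentations of Lemma~\ref{LemShD}(2), to faithfulness of $F$, and then extends to $\bC$ by preservation of filtered colimits. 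For exactness I would instead verify the equivalent condition~(5) of Proposition~\ref{PropInd}: for each morphism $f$ in $\bT_1\stackrel{.}{\otimes}\bT_2$, external products $(I_1,I_2)$ of injective ind-objects $I_i\in\Ind\bT_i$ produce a filtered system in $\bT_1\stackrel{.}{\otimes}\bT_2$ whose colimit splits $Z(f)$ on both sides, because each $I_i\otimes g$ is split in $\Ind\bT_i$ for any $g$ in $\bT_i$ (an argument in the spirit of the proof of Theorem~\ref{Thm1}). Proposition~\ref{PropDelTP}(3) then identifies $\bC^c$ with the tensor category $\bT_1\boxtimes\bT_2$.

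For absoluteness the argument repeats one level up. The functor $\widetilde F$ is now known to be a faithful exact $k$-linear monoidal functor, so passing to $\overline{k}$-modules yields a faithful exact $\overline{k}$-linear monoidal functor $\bC':=(\Ind(\bT_1\boxtimes\bT_2))_{\overline{k}}\to(\Ind\bS)_{\overline{k}}\simeq\Ind\bS$ (the last equivalence because $\Ind\bS$ is already $\overline{k}$-linear). Proposition~\ref{PropInd}(2) applied to $\bC'$ and the canonical $\overline{k}\stackrel{.}{\otimes}(\bT_1\boxtimes\bT_2)\to\bC'$ shows that $\bC'$ is the ind-completion of a tensor category over $\overline{k}$, which by Proposition~\ref{PropExtSca} can only be $(\bT_1\boxtimes\bT_2)_{\overline{k}}$. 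Proposition~\ref{prop:abs} then concludes that $\bT_1\boxtimes\bT_2$ is absolute, and it is artinian since $\bT_1\boxtimes\bT_2$ is artinian by construction.
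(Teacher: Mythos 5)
Your overall architecture (reduce to $\overline{k}$ via Proposition~\ref{prop:abs}, use the auxiliary category $\bS=(\bT_1)_{\overline{k}}\boxtimes_{\overline{k}}(\bT_2)_{\overline{k}}$, and feed everything into Proposition~\ref{PropInd} and Proposition~\ref{PropDelTP}) matches the paper's, but the step where you actually establish that $\bC^c$ is a tensor category has a genuine gap. You propose to verify condition (5) of Proposition~\ref{PropInd} by taking external products $I_1\boxtimes I_2$ of injectives and arguing that they split $Z(f)$ ``because each $I_i\otimes g$ is split in $\Ind\bT_i$''. That justification only covers morphisms of the pure-tensor form $g_1\otimes g_2$; a general morphism of $\bT_1\stackrel{.}{\otimes}\bT_2$ is a sum $\sum_i g_i\otimes h_i$ (conjugated by idempotents), and a sum of split morphisms need not be split. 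Concretely, if some simple object $X\in\bT_1=\bT_2$ has $\End(X)=K$ a purely inseparable extension of $k$, then $\End\bigl((X,X)\bigr)\supset K\otimes_kK\simeq K[s]/(s^p)$ and the nilpotent $s=t\otimes 1-1\otimes t$ is a sum of two morphisms each split after tensoring with $I_1\boxtimes I_2$, yet there is no reason for $s$ itself to become von Neumann regular. The decisive sign that something is wrong is that your verification of condition (5) uses neither the absoluteness hypothesis nor $\bS$: if it were correct, it would show that the Deligne product of artinian tensor categories is always a tensor category over any field, which the paper explicitly records as open. (The direction ``(1)$\Rightarrow$(5)'' in Proposition~\ref{PropInd} splits $I\otimes F(f)$ only by first using that the kernel and image of $F(f)$ are rigid, i.e. it presupposes the conclusion; running that argument in reverse is circular.)

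The paper avoids constructing and analysing $\widetilde F$ altogether. It identifies $\Ind\bS$ with the scalar extension $\bigl(\Sh(\bT_1\stackrel{.}{\otimes}\bT_2,\cT_{\cV})\bigr)_{\overline{k}}\simeq \Sh(\overline{k}\stackrel{.}{\otimes}\bT_1\stackrel{.}{\otimes}\bT_2,\cT_{\cV})$ via the Kelly-tensor-product descriptions from \cite{PreTop}, notes that $\bS$ is a tensor category over $\overline{k}$ by the classical result over perfect fields (\cite[Proposition 4.6.1]{EGNO}), and then applies Proposition~\ref{PropInd}(2) to the canonical functor $\bC\to\bC_{\overline{k}}\simeq\Ind\bS$, which is faithful and exact for free (extension along the faithfully flat $k\to\overline{k}$). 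This is where the absoluteness of the $\bT_i$ and the perfectness of $\overline{k}$ do the real work, and it also renders moot the unverified claims in your proposal that $\widetilde F$ is monoidal, exact and faithful. Your final paragraph on absoluteness of $\bT_1\boxtimes\bT_2$ is fine once the middle step is repaired, since $\Ind\bigl((\bT_1\boxtimes\bT_2)_{\overline{k}}\bigr)\simeq\bC_{\overline{k}}\simeq\Ind\bS$ is then already known to be the ind-completion of a tensor category.
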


%\begin{theorem}
%Let $\bT$ and $\bT'$ be tensor categories such that $\bT_{\bar{k}}$, $\bT'_{\bar{k}}$ and $\bT_{\bar{k}}\boxtimes_{\bar{k}}\bT'_{\bar{k}}$ are tensor categories over $\bar{k}$ (so in particular, the latter exists), with $\bar{k}$ the algebraic closure of $k$. Then $\bT\boxtimes_k \bT'$ exists and is a tensor category.
%\end{theorem}

\begin{proof}
	%Consider tensor categories  $\bT_1,\,\bT_2$ of finite type, so $\bT_1\bullet\bT_2$ exists and is of finite type.  
	Let $\bar k$ be an algebraic closure of $k$. By assumption, $(\bT_i)_{\bar k}$ is an artinian tensor category for $i=1,2$.
Recall from Proposition~\ref{PropExtSca} that	
$$\Ind(\bT_1)_{\bar{k}}\simeq \Sh(\bar k\stackrel{.}{\otimes}\bT_1,\cT_{(\cU(\bT_1),\id)}),\quad\mbox{and }\quad\Ind(\bT_2)_{\bar{k}}\simeq \Sh(\bar k\stackrel{.}{\otimes}\bT_2,\cT_{(\id,\,\cU(\bT_2))}).$$
	 Now $(\bT_1)_{\bar{k}}\boxtimes _{\bar{k}} (\bT_2)_{\bar{k}}$ exists and is a tensor category over $\bar{k}$ by~\cite[Proposition 4.6.1]{EGNO}. 
	Its ind-completion can be written as 
	$$\Sh(\bar k\stackrel{.}{\otimes}\bT_1\stackrel{.}{\otimes}\bT_2,\cT_{\cV})$$
	by~\cite[Theorem~3.6.2]{PreTop},
	where $\cV\subset\cU^{ex}(\bT_1\stackrel{.}{\otimes}\bT_2)$ is the class of all morphism of the form $(u,\id)$ or $(\id,v)$ with $u\in\cU(\bT_1)$ and $v\in\cU(\bT_2)$.
	%$(\id,u,\id):(\unit,U,\unit)\to (\unit,\unit,\unit)$ or $(\id,\id,v):(\unit,\unit,V)\to (\unit,\unit,\unit)$ with $u\in\cU(\bT_1)$ and $v\in\cU(\bT_2)$.
	Using \cite[Example~3.6.4 and Theorem~3.6.2]{PreTop} again, the above category is equivalent to $(\Sh(\bT_1\stackrel{.}{\otimes}\bT_2,\cT_\cV))_{\bar{k}}$.
	It follows that $\Sh(\bT_1\stackrel{.}{\otimes}\bT_2,\cT_\cV)$ is the ind-completion of a tensor category by~Proposition~\ref{PropInd}(2). 
	Hence Proposition~\ref{PropDelTP} shows that $\bT_1\boxtimes\bT_2$ is a tensor category, and so is its extension of scalars $(\bT_1\boxtimes\bT_2)_{\bar k}$ by~Proposition~\ref{PropExtSca}.
\end{proof}

\begin{corollary}
	The Deligne tensor product of (super-)tannakian categories is an absolute tensor category.
\end{corollary}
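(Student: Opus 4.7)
The plan is to show that this corollary is essentially an immediate consequence of the preceding theorem, combined with the absoluteness results already established for the tannakian and super-tannakian cases.

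First, I would verify that (super-)tannakian categories satisfy the two hypotheses of the preceding theorem, namely that they are artinian and absolute. Being artinian is a standard property of (super-)tannakian categories and was already used in Example~\ref{ex:ext}(3). Absoluteness follows from two inputs: tannakian categories are absolute by Example~\ref{ex:ext}(3), and super-tannakian categories are absolute by Theorem~\ref{th:superabs}. In characteristic $\neq 2$, tannakian is a special case of super-tannakian; in characteristic $2$, only tannakian is in play, so the combined statement is covered in every characteristic.

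Next, I would simply invoke the theorem immediately preceding this corollary: the Deligne tensor product of two absolute artinian tensor categories is an absolute artinian tensor category. Applied to $\bT_1$ and $\bT_2$ both (super-)tannakian, this directly yields that $\bT_1 \boxtimes \bT_2$ exists, is a tensor category, and is absolute. One mild care point: if we want to allow $\bT_1$ tannakian and $\bT_2$ super-tannakian in mixed form, we note that (in characteristic $\neq 2$) every tannakian category is super-tannakian via the trivially graded fibre functor, so the hypothesis on both factors is uniformly met.

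The main (and essentially only) issue to double-check is the absoluteness of super-tannakian categories, which is precisely the content of Theorem~\ref{th:superabs}; since that has already been proved, there is no genuine obstacle here and the corollary follows in a single line. One could optionally strengthen the statement by observing that $\bT_1 \boxtimes \bT_2$ also remains (super-)tannakian — a fibration of $\bT_i$ over $R_i$ induces a fibration of $\bT_1 \boxtimes \bT_2$ over $R_1 \otimes_k R_2$ after passing to a residue field — but this goes beyond what is asserted, and the corollary as stated does not require it.
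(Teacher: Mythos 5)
Your proof is correct and matches the paper's intent exactly: the corollary is stated without proof as an immediate consequence of the preceding theorem, with absoluteness of the factors supplied by Example~\ref{ex:ext}(3) in the tannakian case and Theorem~\ref{th:superabs} in the super-tannakian case, and artinianness being standard for (super-)tannakian categories. Your additional remarks on the mixed case and on $\bT_1\boxtimes\bT_2$ remaining (super-)tannakian are sound but not needed.
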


\begin{prop}
	The Deligne tensor product of semisimple artinian tensor categories $\bT_1,\,\bT_2$ is a semisimple artinian tensor category and equals $\bT_1\stackrel{.}{\otimes}\bT_2$.
\end{prop}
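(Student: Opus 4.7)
The plan is to argue that $\bT_1\stackrel{.}{\otimes}\bT_2$ is \emph{already} a semisimple artinian tensor category; once that is established, by uniqueness of the abelian envelope it will automatically coincide with the Deligne product.

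The main computational step will be to show that every endomorphism algebra in $\bT_1\stackrel{.}{\otimes}\bT_2$ is finite-dimensional semisimple over $k$. For objects $(X_1,X_2)$ of $\bT_1\otimes_k\bT_2$ one has $\End(X_1,X_2)\simeq\End_{\bT_1}(X_1)\otimes_k\End_{\bT_2}(X_2)$, which is finite-dimensional because the $\bT_i$ are artinian. By Theorem~\ref{PropSSE} each factor $\End_{\bT_i}(X_i)$ is a separable $k$-algebra, and the tensor product of two finite-dimensional separable $k$-algebras is again separable, hence semisimple. Passing from $\bT_1\otimes_k\bT_2$ to its pseudo-abelian envelope only replaces such algebras by corner subalgebras $eAe$ of direct sums of them, which remain finite-dimensional semisimple. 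Invoking the result of Jannsen cited in the proof of Theorem~\ref{PropSSE} (Lemma~2 of \cite{Jannsen}), I will then conclude that $\bT_1\stackrel{.}{\otimes}\bT_2$ is a semisimple tensor category.

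To finish, I will combine this with earlier results of this section. Since $\bT_1,\bT_2$ are absolute by Theorem~\ref{PropSSE}, the preceding theorem guarantees that $\bT_1\boxtimes\bT_2$ exists and is an absolute artinian tensor category. Proposition~\ref{PropDelTP} then identifies $\bT_1\boxtimes\bT_2$ with the abelian envelope of $\bT_1\stackrel{.}{\otimes}\bT_2$. But the latter is itself a tensor category by the previous step, and every tensor category is its own abelian envelope by Theorem~\ref{Thm1}. Hence the canonical monoidal functor $\bT_1\stackrel{.}{\otimes}\bT_2\to\bT_1\boxtimes\bT_2$ must be an equivalence.

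The technical heart of the argument is the appeal to separability: over an imperfect field the tensor product of two semisimple $k$-algebras need not be semisimple, so the conclusion that $\bT_1\stackrel{.}{\otimes}\bT_2$ is already abelian genuinely relies on Theorem~\ref{PropSSE} and cannot be derived purely from semisimplicity of the factors.
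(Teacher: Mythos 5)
Your proposal is correct and follows essentially the same route as the paper: both arguments rest on Theorem~\ref{PropSSE} (separability of endomorphism algebras in a semisimple artinian tensor category) together with the fact that a tensor product of separable $k$-algebras is separable, and then invoke Jannsen's lemma to conclude that $\bT_1\stackrel{.}{\otimes}\bT_2$ is itself a semisimple tensor category, hence its own abelian envelope and therefore equal to $\bT_1\boxtimes\bT_2$. The paper states this very tersely; your version merely spells out the intermediate steps (finite-dimensionality, stability under idempotent completion, and the identification with the Deligne product), all of which are sound.
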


\begin{proof}
	By~Proposition~\ref{PropSSE}, the endomorphism algebras in $\bT_1$ and $\bT_2$ are separable. The proposition now follows because the tensor product of separable $k$-algebras is a separable $k$-algebra.
\end{proof}

%\begin{corollary}
%For artinian tensor categories $\bT_1$ and $\bT_2$ over $k$, Deligne's tensor product $\bT_1\boxtimes\bT_2$ is a tensor category.
%\end{corollary}

%Denote by $\bT_1\otimes\bT_2$ the ordinary tensor product as $k$-linear categories. Its objects are pairs $(X,Y)$ with $X\in\bT_1$ and $Y\in\bT_2$. The space of morphisms $(X,Y)\to (X',Y')$
%is given by
%$$\bT_1(X,X')\otimes_k\bT_2(Y,Y').$$
%Then $\bT_1\otimes\bT_2$ has a canonical monoidal structure such that $\bT_1\to\bT_1\otimes \bT_2$, given by $X\mapsto (X,\unit)$, and the corresponding functor for $\bT_2$, is monoidal. We denote by $\bT_1\stackrel{.}{\otimes}\bT_2$ the pseudo-%abelian envelope of $\bT_1\otimes\bT_2$.

\subsection{Coproducts of symmetric tensor categories}\label{copr}As a variation of Proposition~\ref{PropDelTP}, we will connect the existence of coproducts in the 2-category $\sTens$ with the existence of weak abelian envelopes.

Consider $\bD_1$ and $\bD_2$ in $\sPTens$. 
We denote their tensor product as $k$-linear categories by $\bD_1\otimes\bD_2$. Its objects are pairs $(X,Y)$ with $X\in\bD_1$ and $Y\in\bD_2$. The space of morphisms $(X,Y)\to (X',Y')$
is given by
$$\bD_1(X,X')\otimes_k\bD_2(Y,Y').$$
Note that $\bD_1\otimes\bD_2$ has a canonical symmetric monoidal structure such that $\bD_1\to\bD_1\otimes \bD_2$, given by $X\mapsto (X,\unit)$, and the corresponding functor for $\bD_2$, is symmetric monoidal. The pseudo-abelian envelope $\bD_1\stackrel{.}{\otimes}\bD_2$ of $\bD_1\otimes\bD_2$ is then canonically a symmetric pseudo-tensor category over $k$.

\begin{lemma}\label{PseudoPseudo}
For a symmetric tensor category $\bT$, we have a canonical equivalence
$$\sPTens^{faith}(\bD_1\stackrel{.}{\otimes}\bD_2,\bT)\;\stackrel{\sim}{\to}\;\sPTens^{faith}(\bD_1,\bT)\times\sPTens^{faith}(\bD_2,\bT)$$
\end{lemma}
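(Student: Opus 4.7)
The equivalence will go via composition with the canonical symmetric monoidal functors $\iota_j:\bD_j\to\bD_1\stackrel{.}{\otimes}\bD_2$ ($j=1,2$), namely $\iota_1(X)=(X,\unit)$ and $\iota_2(Y)=(\unit,Y)$. My plan is to construct the inverse assignment explicitly and verify it gives a functor valued in \emph{faithful} symmetric monoidal functors.

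First, I would observe that the forward map is well defined: each $\iota_j$ is symmetric monoidal, and faithful because the canonical map $\bD_j(X,X')\to \bD_1(X,X')\otimes_k\bD_2(\unit,\unit)\simeq \bD_j(X,X')\otimes_k k$ is an isomorphism, using $\bD_{3-j}(\unit,\unit)=k$ from the pseudo-tensor axiom.

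For the inverse, given faithful symmetric monoidal $F_j:\bD_j\to\bT$, I define $F$ on $\bD_1\otimes\bD_2$ by
\[
F(X,Y)\;:=\;F_1(X)\otimes F_2(Y),\qquad F(f\otimes g)\;:=\;F_1(f)\otimes F_2(g),
\]
and promote $F$ to a symmetric monoidal functor using the monoidal structures of $F_1,F_2$ together with the braiding of $\bT$ (to interchange the middle factors in $F_1(X\otimes X')\otimes F_2(Y\otimes Y')$). The universal property of the pseudo-abelian envelope then extends $F$ uniquely to a symmetric monoidal functor $\bD_1\stackrel{.}{\otimes}\bD_2\to\bT$.

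The key step, and the main obstacle, is verifying that $F$ is faithful. By universal properties of tensor and pseudo-abelian envelopes, it suffices to check injectivity of
\[
\bD_1(X,X')\otimes_k\bD_2(Y,Y')\;\longrightarrow\;\bT(F_1(X)\otimes F_2(Y),\,F_1(X')\otimes F_2(Y')).
\]
Faithfulness of $F_1$ and $F_2$ gives injections $\bD_j(-,-)\hookrightarrow\bT(F_j(-),F_j(-))$, so the display factors through
\[
\bT(F_1(X),F_1(X'))\otimes_k\bT(F_2(Y),F_2(Y'))\;\longrightarrow\;\bT(F_1(X)\otimes F_2(Y),\,F_1(X')\otimes F_2(Y')),
\]
which is injective by Lemma~\ref{LemIndFaith}(1) (applied in $\bT$, viewed inside $\Ind\bT$).

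Finally, I would verify that the two constructions are mutually inverse up to canonical natural isomorphism. One direction is essentially tautological: $(F\circ\iota_1,F\circ\iota_2)$ recovers $(F_1,F_2)$. For the other, given $F$, the functor built from $(F\iota_1,F\iota_2)$ sends $(X,Y)=(X,\unit)\otimes(\unit,Y)$ to $F(X,\unit)\otimes F(\unit,Y)$, which is canonically isomorphic to $F(X,Y)$ via the monoidal structure of $F$. Naturality of these isomorphisms is routine, and monoidal natural transformations transport along the same formulas, yielding the claimed equivalence of groupoids.
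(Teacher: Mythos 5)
Your proposal is correct and follows essentially the same route as the paper: the paper black-boxes your explicit construction and mutual-inverse check by citing that $\bD_1\stackrel{.}{\otimes}\bD_2$ is the coproduct in $\sPTens$, and then, exactly as you do, reduces to the faithfulness of $(X,Y)\mapsto F_1(X)\otimes F_2(Y)$ by factoring the map on hom-spaces through $\bT(F_1(X),F_1(X'))\otimes_k\bT(F_2(Y),F_2(Y'))$ and invoking Lemma~\ref{LemIndFaith}(1).
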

\begin{proof}
It is well-known that $\bD:=\bD_1\stackrel{.}{\otimes}\bD_2$ is the coproduct of $\bD_1$ and $\bD_2$ in $\sPTens$.
It thus suffices to show that the equivalence
$$\sPTens(\bD,\bT)\;\stackrel{\sim}{\to}\;\sPTens(\bD_1,\bT)\times\sPTens(\bD_2,\bT)$$
exchanges faithful functors on the left with pairs of faithful functors on the right. 
Since the defining functors $\bD_i\to \bD$ are faithful, one direction is clear. Consider faithful linear monoidal functors $F_i:\bD_i\to\bT$. We want to show that
$$\bD_1\otimes\bD_2\to\bT,\quad (X_1,X_2)\mapsto F_1(X_1)\otimes F_2(X_2)$$
is faithful, which means showing that
$$\bD_1(X_1,Y_1)\otimes_k\bD_2(X_2,Y_2)\;\to\; \bT(F_1(X_1)\otimes F_2(X_2),F_1(Y_1)\otimes F_2(Y_2))$$
is injective, for all $X_1,X_2,Y_1,Y_2$.  By letting the map canonically factor through  $$\bT(F_1(X_1),F_1(Y_1))\otimes_k\bT(F_2(X_2),F_2(Y_2)),$$
we write it as a composition of two injective morphisms, by faithfulness of $F_1$ and $F_2$ and Lemma~\ref{LemIndFaith}(1).
This concludes the proof.
\end{proof}

%By adjunction, we can reduce to the case $X_1=\unit$ and $X_2=\unit$. In that case we can write the above morphism as a composite
%$$\bD_1(\unit,X)\otimes_k\bD_2(\unit,Y)\;\hookrightarrow\; \bT( \unit,F_1(X))\otimes_k\bT(\unit, F_2(Y)) \;\hookrightarrow\;\bT(\unit,F_1(X)\otimes F_2(Y)),$$
%where the left map is a monomorphism by faithfulness of $F_1,F_2$ and the second one is a monomorphism by 

\begin{corollary}\label{CorDelTP}
A diagram $\bT_1\to\bT\leftarrow \bT_2$ in $\sTens$ constitutes a coproduct if and only if the associated (faithful) linear monoidal functor $\bT_1\stackrel{.}{\otimes}\bT_2\to \bT$ is a weak abelian envelope.\end{corollary}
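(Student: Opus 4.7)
The plan is to reduce the claim to comparing two universal properties by chaining together Lemma~\ref{PseudoPseudo}, Theorem~\ref{Thm1} (and its symmetric counterpart from Lemma~\ref{LemSym}), and faithfulness of symmetric tensor functors. First I would note that the induced functor $\bT_1\stackrel{.}{\otimes}\bT_2\to\bT$ is automatically faithful: by Lemma~\ref{PseudoPseudo} this reduces to faithfulness of the two component functors $\bT_i\to\bT$, which holds since every tensor functor is faithful (Theorem~\ref{Thm1}). So the parenthetical ``(faithful)'' in the statement is justified and we need only compare universal properties.

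Next I would unfold both sides. The diagram $\bT_1\to\bT\leftarrow\bT_2$ is a coproduct in $\sTens$ exactly when, for every symmetric tensor category $\bT'$ over $k$, composition induces an equivalence
$$\sTens(\bT,\bT')\;\simeq\;\sTens(\bT_1,\bT')\times\sTens(\bT_2,\bT').$$
On the other hand, unwinding Definition~\ref{DefAbEnv} (in its symmetric version — the proof of Lemma~\ref{LemSym} applies verbatim to weak abelian envelopes, since it only uses Lemma~\ref{LemTGen}, whose proof does not use fullness of the defining functor), $\bT_1\stackrel{.}{\otimes}\bT_2\to\bT$ is a weak abelian envelope if and only if composition induces an equivalence
$$\sTens(\bT,\bT')\;\simeq\;\sPTens^{faith}(\bT_1\stackrel{.}{\otimes}\bT_2,\bT')$$
for every such $\bT'$.

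To identify the two right-hand sides, I apply Lemma~\ref{PseudoPseudo} to obtain
$$\sPTens^{faith}(\bT_1\stackrel{.}{\otimes}\bT_2,\bT')\;\simeq\;\sPTens^{faith}(\bT_1,\bT')\times\sPTens^{faith}(\bT_2,\bT'),$$
and Theorem~\ref{Thm1} (together with Lemma~\ref{LemSym}) to identify each factor $\sPTens^{faith}(\bT_i,\bT')$ with $\sTens(\bT_i,\bT')$. Chaining these equivalences produces a natural identification of the target of the weak abelian envelope property with the target of the coproduct property.

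Finally I would check that the resulting square of equivalences is compatible with the two ``composition'' functors out of $\sTens(\bT,\bT')$ — this is essentially a diagram chase using that the canonical functors $\bT_i\to\bT_1\stackrel{.}{\otimes}\bT_2$ are precisely the ones used to assemble $\bT_1\stackrel{.}{\otimes}\bT_2\to\bT$ from the two tensor functors $\bT_i\to\bT$. Once that is verified, one property holds for all $\bT'$ if and only if the other does, proving both directions of the corollary. The main (minor) obstacle is ensuring the symmetric version of weak abelian envelopes is well-behaved, i.e.\ that Lemma~\ref{LemSym} genuinely goes through without the fullness assumption; I expect this to be immediate from the proof of Lemma~\ref{LemTGen}, which already applies to weak abelian envelopes.
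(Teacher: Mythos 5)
Your proposal is correct and follows essentially the same route as the paper: the paper's proof likewise combines Lemma~\ref{PseudoPseudo} with Theorem~\ref{Thm1} to produce, for every symmetric tensor category $\bT'$, a commutative triangle identifying $\sPTens^{faith}(\bT_1\stackrel{.}{\otimes}\bT_2,\bT')$ with $\sTens(\bT_1,\bT')\times\sTens(\bT_2,\bT')$, so that the two universal properties are exchanged. Your extra care about the symmetric version of the weak-envelope universal property (via the argument of Lemma~\ref{LemSym}) is a point the paper passes over silently, but it does not change the substance of the argument.
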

\begin{proof}
 Lemma~\ref{PseudoPseudo} and Theorem~\ref{Thm1} yield, for any tensor category $\bT'$, a commutative diagram
$$
\xymatrix{
\sTens(\bT,\bT')\ar[rr]\ar[rrd]&& \sPTens^{faith}(\bT_1\stackrel{.}{\otimes}\bT_2,\bT')\ar[d]\\
&& \sTens(\bT_1,\bT')\times\sTens(\bT_2,\bT')
}
$$
where the vertical arrow is an equivalence.
This shows that the two universal properties in the corollary are exchangeable.
\end{proof}

\begin{remark}
By Theorem~\ref{MainThmAbEnv} and~Lemma~\ref{PseudoPseudo}, the analogue of Corollary~\ref{CorDelTP} is also true if we replace $\bT_1\stackrel{.}{\otimes}\bT_2$ by $\bD_1\stackrel{.}{\otimes}\bD_2$, for arbitrary strongly generating pseudo-tensor subcategories $\bD_i\subset\bT_i$.
\end{remark}

\section{Further examples and non-examples}\label{SecEx}

\subsection{An example of a weak abelian envelope}\label{ExBEO}

%We provide an example of a $\bD\in\PTens^{faith}^0$ for which $\bD\to\Phi\Psi\bD$ is not full. This provides in particular an example of a tensor category which admits a faithful linear monoidal functor to a tensor category, but which admits no abelian envelope.

\begin{prop}\label{PropNotAbEnv}
Let $\bT$ be a tensor category over $k$ with projective objects and $\bD$ a pseudo-tensor category over $k$ which is a monoidal, but not necessarily full, subcategory of~$\bT$.
Assume that $\bD$ contains the projective objects, and for every projective object $P$ and every $X\in \bD$,
every morphism $P\to X$ in $\bT$ is contained in $\bD$.
 Then $\bD\to\bT$ is a weak abelian envelope.
\end{prop}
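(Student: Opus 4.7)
The plan is to apply Proposition~\ref{PropGF} directly to the inclusion $F:\bD\hookrightarrow \bT$. Since $\bD$ is a subcategory of $\bT$ (albeit not necessarily full), $F$ is automatically faithful, so it remains only to verify the conditions (G) and (F). Both will follow almost immediately once one unpacks the assumption on morphisms out of projective objects.

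For (G), I would invoke Example~\ref{ExProj}: since $\bT$ contains a nonzero projective object, it has enough projective objects (via \cite[Proposition~6.1.3]{EGNO}). Thus every object of $\bT$ is a quotient of some projective $P$, and by hypothesis $P\in\bD$. So $\bD\subset\bT$ is strongly generating, which gives (G) in an even stronger form.

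For (F), I would take an arbitrary morphism $a:F(X)\to F(Y)$ with $X,Y\in\bD$, and choose an epimorphism $q:P\tto X$ in $\bT$ with $P$ projective. Two applications of the hypothesis on morphisms from projectives then close the argument: first, since $P$ is projective and $X\in\bD$, the morphism $q$ lies in $\bD$, so $F(q)=q$ is an epimorphism in $\bT$ whose source $P$ is an object of $\bD$; second, the composite $a\circ q:P\to Y$ is a morphism from a projective into $Y\in\bD$, hence also lies in $\bD$, which is precisely to say that $a\circ F(q)$ is in the image of $F$.

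There is essentially no obstacle here; the content of the proposition is that the two hypotheses have been chosen so as to match (F) and (G) of Proposition~\ref{PropGF} on the nose, once one notices that choosing a projective cover (or just any projective epimorphism) reduces both the epimorphism condition and the image condition to the single statement $\bD(P,Z)=\bT(P,Z)$ for $P$ projective and $Z\in\bD$. The result is not automatically an abelian envelope, only a \emph{weak} one, because $F$ is not assumed to be full; this is exactly the phenomenon flagged in Remark~\ref{RemPhi}(3) that will be illustrated concretely in this subsection.
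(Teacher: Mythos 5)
Your proof is correct and is exactly the intended argument: the paper's own proof consists of the single line ``This is a special case of Proposition~\ref{PropGF},'' and your verification of (G) via enough projectives and of (F) via a projective epimorphism $P\tto X$ together with the hypothesis $\bD(P,Z)=\bT(P,Z)$ is precisely how that reduction is meant to be carried out.
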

\begin{proof}
This is a special case of Proposition~\ref{PropGF}.
\end{proof}

%We only need to consider the morphisms $M_1\to M_{p-1}$ and $M_{p-1}\to M_1$. Since the epimorphisms $M_p\tto M_{p-1}$ and $M_p\tto M_1$ and all morphisms $Mp\to M_{p-1}$ are in $\bD$ the conclusion follows quickly. 

%To demonstrate that Question~\ref{QueAdj}(2) is not trivial, we give an explicit example of the set-up in Proposition~\ref{PropNotAbEnv} where the subcategory is not full.

\begin{example}Let $k$ be a field of characteristic $p>2$.
 We consider the symmetric tensor category $\bT:=\Rep_kC_p$ of representations of the cyclic group $C_p=\mZ/p$ of order $p$. 
 Consider the additive monoidal subcategory $\bD$ of $\bT$ which contains the indecomposable modules (labelled by their dimension) $\unit=M_1$, $M_{p-1}$ and $kC_p\simeq M_p$, and which is full in $\bT$ except for
 $$\bD(M_1,M_{p-1})=0=\bD(M_{p-1},M_1).$$ In \cite[Example~2.44(3)]{BEO}, it is demonstrated that 
 $\bD$ is a monoidal subcategory of $\bT$. Clearly the conditions in Proposition~\ref{PropNotAbEnv} are satisfied. Hence $\bD$ admits a weak abelian envelope (which is $\bT$), but no abelian envelope.
\end{example}

\subsection{Not every tensor category is self-splitting}\label{RepGa} 
%In \cite[Theorem~9.2.2]{EHS}, the special case of Theorem~\ref{MainThmAbEnv} (so in particular also a special case of Theorem~\ref{Thm1}) for $\bD$ such that every morphism in $\bD$ splits after tensoring with a suitable object was proved.
%We provide an explicit example of a tensor category for which Theorem~\ref{Thm1} cannot be derived from  \cite[Theorem~9.2.2]{EHS}. In particular this yields an abelian envelope which does not fit into the theories of \cite{BEO, AbEnv1}.

\subsubsection{} Let $k$ be a field of characteristic zero and consider the additive group $\mG_a$ of $k$. The symmetric tensor category $\Rep\mG_a$ is unipotent and the indecomposable representations (including the zero representation) can be labelled as $M_i$, $i\in\mN$, by their dimension. 

\begin{lemma}\label{LemGa} Consider the (pseudo-)tensor category $\bD=\Rep\mG_a$.
 For a monomorphism $u:\unit\simeq M_1\hookrightarrow M_2$, there is no $X\in \Rep\mG_a$ for which $u\otimes X$ is split.
\end{lemma}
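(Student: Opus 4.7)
The strategy is to exploit the non-split short exact sequence $0 \to M_1 \to M_2 \to M_1 \to 0$ in which $u$ is the injection (non-split because $M_2$ is indecomposable). Since every object in $\Rep\mG_a$ is rigid, tensoring with $X$ is exact, and using $M_2/\im u \cong M_1$ together with $M_1 \otimes X \cong X$ this yields
$$0 \to X \xrightarrow{u \otimes X} M_2 \otimes X \to X \to 0.$$
If $u \otimes X$ were split in the sense of the paper then, being a monomorphism, it would be a split monomorphism in the classical sense (any $g$ with $fgf=f$ forces $gf = \id_X$ as soon as $f$ is mono), and the sequence above would split. In particular one would obtain an isomorphism $M_2 \otimes X \cong X \oplus X$ in $\Rep\mG_a$.

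I would then contradict such an isomorphism for $X \neq 0$ by comparing nilpotency indices. A finite-dimensional $\mG_a$-representation $V$ is the same as a nilpotent endomorphism $N \in \End(V)$, and the nilpotency index of $N$ equals the size $d$ of the largest Jordan block, i.e.\ the largest $n$ such that $M_n$ occurs as a summand of $V$. The index of $X \oplus X$ is therefore still $d$, whereas on $M_2 \otimes X$ the endomorphism $N$ acts as $N_{M_2} \otimes 1 + 1 \otimes N_X$, and since $N_{M_2}^2 = 0$ the binomial expansion collapses to
$$N_{M_2 \otimes X}^{j} \;=\; 1 \otimes N_X^{j} \;+\; j\, N_{M_2} \otimes N_X^{j-1}.$$
Evaluation at $j = d+1$ gives zero, but evaluation at $j = d$ gives $d\,(N_{M_2} \otimes N_X^{d-1})$, which is nonzero because $\charr(k) = 0$ makes the integer $d$ invertible in $k$ and $N_X^{d-1} \neq 0$. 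Hence the nilpotency index of $M_2 \otimes X$ equals $d+1 \neq d$, the desired contradiction.

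The only real subtlety is matching the paper's definition of `split' with the classical notion of a split monomorphism; the rest is elementary linear algebra, and the standing hypothesis $\charr(k) = 0$ enters exactly once, through the invertibility of the integer $d$ in $k$.
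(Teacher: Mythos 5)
Your proof is correct, but it takes a genuinely different route from the paper's. The paper first invokes \cite[Lemma~2.1]{BEO} to reduce to the case where $X=M_i$ is indecomposable, and then uses the explicit Clebsch--Gordan-type decomposition $M_2\otimes M_i\cong M_{i-1}\oplus M_{i+1}$ (valid in characteristic zero) together with Krull--Schmidt to see that $M_i\hookrightarrow M_{i-1}\oplus M_{i+1}$ cannot split. You instead work with arbitrary $X$ directly: you correctly observe that the paper's notion of split, applied to a monomorphism, gives a split monomorphism in the classical sense, so splitness of $u\otimes X$ would force $M_2\otimes X\cong X\oplus X$, and you rule this out by computing that the nilpotency index of $N_{M_2}\otimes 1+1\otimes N_X$ is $d+1$ rather than $d$ (using that the integer $d$ is invertible since $\charr(k)=0$, and that a tensor product of nonzero linear maps is nonzero). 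Your argument is more self-contained --- it needs neither the external reduction lemma from \cite{BEO} nor the tensor product decomposition of indecomposables --- at the cost of being slightly longer; the paper's version is shorter given those two inputs. Both arguments use characteristic zero in an essential way, yours exactly where you say it does.
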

\begin{proof}
By \cite[Lemma~2.1]{BEO} it suffices to prove that $M_i$ does not split $u$, for all $i\in\mZ_{>0}$. However, we have
$$u\otimes M_i\,:\, M_i\hookrightarrow M_{i-1}\oplus M_{i+1},$$
which cannot be split.
\end{proof}

\begin{remark}
We have already observed that $\bD=\Rep\mG_a$ is, as any tensor category, its own abelian envelope. However, this fact does not fit into the theories of \cite{BEO, AbEnv1}. Indeed, $\bD$ is not `separated' in the terminology of \cite{BEO}, by Lemma~\ref{LemGa}, so \cite[Theorem~1.2]{BEO} does not apply. It also follows easily that the monoidal Grothendieck category constructed in \cite[Theorem~A]{AbEnv1} is simply $\PSh\bD$, so in particular not $\Ind\bD$. This demonstrates the need for the theory of abelian envelopes as developed in the current paper. 
\end{remark}

\subsection{Tilting modules over a reductive group}

Let $G$ be a reductive group over an algebraically closed field $k$ of characteristic $p>0$ and let
$\Tilt(G)\subset \Rep(G)$ be the category of tilting modules, see \cite[Chapter~E]{Jantzen}. By \cite[Remark~3.3.2]{CEH} and Theorem~\ref{MainThmAbEnv}, we find the following.
\begin{prop}\label{TiltEnv}
The tensor category $\Rep(G)$ is the abelian envelope of $\Tilt(G)$.
\end{prop}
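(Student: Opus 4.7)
The plan is to verify the two hypotheses of Theorem~\ref{MainThmAbEnv} for the inclusion $\Tilt(G)\subset\Rep(G)$, namely that $\Tilt(G)$ is a pseudo-tensor subcategory of $\Rep(G)$ and that it is strongly generating in the sense of Definition~\ref{DefGen}(2). Once both are in place, the proposition follows immediately.

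First I would check the pseudo-tensor subcategory structure. By the standard results recalled in \cite[Chapter~E]{Jantzen} (going back to Ringel/Donkin/Mathieu), the class of tilting modules for a reductive group $G$ is closed under finite direct sums and direct summands, under finite tensor products (Mathieu's theorem), under taking duals $M\mapsto M^\ast$, and contains the tensor unit $\unit=k$. Since $\Tilt(G)$ is a full subcategory of $\Rep(G)$, it therefore satisfies all conditions of Definition~\ref{DefPseudo} and the definition of a pseudo-tensor subcategory given right after it.

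Next I would invoke \cite[Remark~3.3.2]{CEH}, already used in Example~\ref{ExVsg}, to conclude that every object of $\Rep(G)$ is a quotient of a tilting module. This is precisely the condition that $\Tilt(G)$ strongly generates $\Rep(G)$ in the sense of Definition~\ref{DefGen}(2). With both hypotheses verified, Theorem~\ref{MainThmAbEnv} immediately identifies $\Rep(G)$ with the abelian envelope of $\Tilt(G)$.

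There is essentially no obstacle here: everything is a direct assembly of cited results. The only substantive ingredient, that tilting modules are tensor-closed, is not proved in the paper and is being imported from the representation-theoretic literature; similarly the strong generation is outsourced to \cite{CEH}. In a fully self-contained write-up the main work would be to justify these two imports, but within the framework of this paper the proof really does reduce to two references plus an application of Theorem~\ref{MainThmAbEnv}.
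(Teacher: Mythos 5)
Your proposal is correct and follows exactly the paper's argument: the paper's proof consists precisely of citing \cite[Remark~3.3.2]{CEH} for the quotient (strong generation) property and applying Theorem~\ref{MainThmAbEnv}. Your additional verification that $\Tilt(G)$ is a pseudo-tensor subcategory (tensor-closure via Mathieu's theorem, etc.) is left implicit in the paper but is the same standard input.
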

Note that the above is just the generalisation of \cite[Theorem~3.3.1]{CEH} which ignores braidings.

For any $r\in \mN$
let $\St_r=T((p^r-1)\rho)$ be the Steinberg module (we henceforth exclude $G$ for which the latter does not exist, which can only occur if $p=2$). 
Let $\bC$ be
a pseudo-tensor category and let $F: \Tilt(G)\to \bC$ be a linear monoidal functor.

\begin{prop}\label{SteiTest} If $F$ is not faithful, there is $r\in\mN$ such that
$F(\St_r)=0$.
\end{prop}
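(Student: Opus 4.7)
\medskip

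\noindent\textbf{Proof plan.} The strategy is to analyse the class of morphisms sent to zero by $F$ as a tensor ideal, and to show this ideal is detected by the Steinberg modules. Set
$$\mathcal{J} := \{ g \in \mathrm{Mor}(\Tilt(G)) \mid F(g)=0 \}.$$
Since $F$ is a $k$-linear monoidal functor to a pseudo-tensor category, $\mathcal{J}$ is a two-sided tensor ideal of morphisms: it is closed under addition, under composition on either side with arbitrary morphisms of $\Tilt(G)$, under tensoring with arbitrary morphisms, and under passing to left or right duals (using $F(f^\ast)=F(f)^\ast$). The hypothesis that $F$ is not faithful says exactly that $\mathcal{J}\neq 0$, and the desired conclusion $F(\St_r)=0$ is equivalent to $\id_{\St_r}\in \mathcal{J}$. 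So the proposition reduces to the statement that every nonzero two-sided tensor ideal of $\Tilt(G)$ contains the identity of some Steinberg module.

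The first step is to upgrade from a nonzero morphism in $\mathcal{J}$ to the identity of a nonzero tilting module. Take any $0\neq f\in \mathcal{J}$ and, decomposing source and target, assume $f:T(\lambda)\to T(\mu)$ with both indecomposable. Using rigidity, $f$ corresponds to a nonzero morphism $\alpha: \unit \to T(\mu)\otimes T(\lambda)^\ast$ in $\mathcal{J}$. The tensor ideal generated by such an $\alpha$ contains $\id_{T(\nu)}$ for some indecomposable tilting module $T(\nu)$: this is a consequence of the structural result that proper tensor ideals in $\Tilt(G)$ form a descending chain of ideals, each generated by an identity morphism on a tilting module of suitably high weight. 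This is essentially Donkin's and Andersen's description of the thick tensor ideals, which ultimately stems from the Steinberg tensor product formula $\St_{r+1}=\St_1\otimes \St_r^{(1)}$ together with the fact that any indecomposable tilting module is a summand of a tensor product of a small tilting module with some $\St_r$.

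The second step is to pass from the arbitrary $T(\nu)$ to a Steinberg. Choose $r$ large enough that $(p^r-1)\rho - w_0\nu$ is dominant (where $w_0$ denotes the longest Weyl group element). Then the tilting module $T(\nu)\otimes T((p^r-1)\rho - w_0\nu)$ has highest weight exactly $(p^r-1)\rho$, so it contains $\St_r=T((p^r-1)\rho)$ as a direct summand, see \cite[Chapter~E]{Jantzen}. Because $\mathcal{J}$ is closed under tensoring with arbitrary objects and under retracts via the associated idempotents, we conclude $\id_{\St_r}\in\mathcal{J}$, i.e.\ $F(\St_r)=0$.

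The delicate point is the first step: passing from a nonzero morphism in $\mathcal{J}$ to a nonzero object in $\mathcal{J}$. The endomorphism ring of each indecomposable tilting module is a finite-dimensional local $k$-algebra with nilpotent radical, so a priori $\mathcal{J}$ could meet each such ring only inside the radical and never contain an identity. Circumventing this obstruction requires using the tensor-ideal structure globally, and is where the classification of thick tensor ideals of $\Tilt(G)$ enters as the main technical input; the remainder of the argument is then essentially formal, exploiting only rigidity and the existence of Steinbergs as summands of sufficiently large tilting modules.
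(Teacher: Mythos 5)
Your overall architecture (pass from a nonzero morphism killed by $F$ to a nonzero tilting module killed by $F$, then tensor up to a Steinberg module of high enough weight) is the same as the paper's, and your second step is correct: if $F(T(\lambda))=0$ and $(p^r-1)\rho-\lambda$ is dominant, then $\St_r$ is a summand of $T(\lambda)\otimes T((p^r-1)\rho-\lambda)$, so $F(\St_r)=0$. The problem is the first step, which you yourself flag as the delicate point and then do not actually close. You claim that the tensor ideal $\mathcal{J}$ of morphisms killed by $F$ must contain some $\id_{T(\nu)}$ because of ``the classification of thick tensor ideals of $\Tilt(G)$'' and because proper tensor ideals ``form a descending chain, each generated by an identity morphism.'' This does not work: a classification of \emph{thick} tensor ideals (ideals of objects, equivalently ideals generated by identity morphisms) says nothing about whether an arbitrary tensor ideal of \emph{morphisms} contains an identity morphism at all --- which is exactly the obstruction you raise (that $\mathcal{J}$ might meet every local endomorphism ring only in its radical). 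Moreover, the chain claim is false for general reductive $G$: already the thick tensor ideals of $\Tilt(G)$ are governed by two-sided cells of the affine Weyl group and are not linearly ordered outside of small rank. So as written, the crucial implication ``$\mathcal{J}\neq 0$ $\Rightarrow$ $\id_{T(\nu)}\in\mathcal{J}$ for some $\nu\neq$ irrelevant'' is unproved.

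The paper closes this gap with a single concrete splitting statement, \cite[Remark 3.3.2]{CEH}: for any nonzero morphism $f$ in $\Tilt(G)$ and $s$ sufficiently large, $f\otimes\St_s$ is split, and its image is a tilting module. Since $F(f\otimes\St_s)=0$ and split morphisms have their image as a direct summand of the target, $F$ kills that (nonzero) tilting module, hence kills some indecomposable summand $T(\lambda)$. This is both weaker and more elementary than any classification of tensor ideals, and it is precisely the input your argument is missing. If you replace your appeal to the ideal classification by this splitting property, your proof becomes correct and coincides with the paper's.
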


\begin{proof} Let $f: X\to Y$ be a morphism in $\Tilt(G)$ such that $F(f)=0$. Then for sufficiently large
$s$ we have that $f\otimes \St_s$ splits, see~\cite[Remark 3.3.2]{CEH}, and its image is some tilting module. Thus $F(T(\lambda))=0$
for some $\lambda$. Hence $F(\St_r)=0$ if $(p^r-1)\rho -\lambda$ is dominant.
\end{proof}

The combination of Propositions~\ref{TiltEnv} and~\ref{SteiTest} now yields.
\begin{corollary} Assume $\bC$ is a tensor category and $F(\St_r)\ne 0$ for any $r$. Then
$F$ uniquely extends to a tensor functor $\tilde F: \Rep(G)\to \bC$.
\end{corollary}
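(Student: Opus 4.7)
The plan is very short: the corollary follows by directly combining the two preceding propositions via the universal property of the abelian envelope.

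First I would take the contrapositive of Proposition~\ref{SteiTest}. The hypothesis that $F(\St_r)\neq 0$ for every $r\in\mN$ therefore rules out the conclusion of Proposition~\ref{SteiTest}, and hence $F:\Tilt(G)\to\bC$ must be faithful. In other words, $F$ lies in $\PTens^{faith}(\Tilt(G),\bC)$ (and since $G$ is reductive, the tensor category of tilting modules is symmetric, so $F$ naturally lives in $\sPTens^{faith}$ if one cares to track the symmetric structure).

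Next I would invoke Proposition~\ref{TiltEnv}, which asserts that $\Rep(G)$ is the abelian envelope of $\Tilt(G)$. By the defining universal property of an abelian envelope (Definition~\ref{DefAbEnv}), for the tensor category $\bC$ composition with the inclusion $\Tilt(G)\hookrightarrow \Rep(G)$ induces an equivalence
\[
\Tens(\Rep(G),\bC)\;\simeq\; \PTens^{faith}(\Tilt(G),\bC).
\]
The faithful monoidal functor $F$ corresponds under this equivalence to a tensor functor $\tilde F:\Rep(G)\to\bC$, unique up to unique monoidal natural isomorphism, whose restriction to $\Tilt(G)$ recovers $F$. (In the symmetric setting, Lemma~\ref{LemSym} further guarantees that $\tilde F$ is symmetric if $F$ is.)

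There is no real obstacle here: the only thing to observe is that the universal property of the abelian envelope is stated precisely for faithful $k$-linear monoidal functors into tensor categories, so the role of the Steinberg hypothesis is exactly to verify the faithfulness input to that universal property, while Proposition~\ref{TiltEnv} supplies the envelope itself.
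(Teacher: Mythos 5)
Your argument is correct and is exactly the paper's intended proof: the paper simply states that the corollary follows from "the combination of Propositions~\ref{TiltEnv} and~\ref{SteiTest}", which is precisely your contrapositive-of-\ref{SteiTest} plus universal-property-of-the-envelope argument. Nothing further is needed.
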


\begin{example} Let $G=SL_{2n+1}$ and let $q$ be a prime satisfying $q\equiv 1\pmod{p}$. 
Given a positive integer $k$, we write $[k]_q:=\frac{q^k-1}{q-1}\in\mN$. Setting $[1]_q!:=1$, we define $[k+1]_q!:=[k+1]_q[k]_q!$ recursively. For $i\leq k$, the $q$-binomial coefficient is then defined as $\binom{k}{i}_q:=\frac{[k]_q!}{[i]_q![k-i]_q!}$. 

Given some combinatorial data,
in \cite{Jo} the author constructs a linear monoidal functor $F: \Tilt(G)\to \Vecc$ sending the fundamental
representation $T(\omega_i)$ to a vector space of dimension $\binom{2n+1}{i}_q$. 
Recall that
the split Grothendieck ring of $\Tilt(G)$ identifies with the ring of symmetric polynomials in
variables $x_1, x_2, \ldots, x_{2n+1}$ modulo the relation $x_1x_2\cdots x_{2n+1}=1$, see~\cite{Do}. 
Let $e_1, e_2, \ldots ,e_{2n+1}$ be the elementary symmetric functions in the variables
$x_1, x_2, \ldots, x_{2n+1}$ and let $z_1, z_2, \ldots, z_{2n+1}\in \mC$ be complex numbers satisfying
$$e_i(z_1, z_2, \ldots, z_{2n+1})=\binom{2n+1}{i}_q,\quad\mbox{for $1\le i\le 2n+1$}.$$
Then $\dim F(\St_r)=S_\lambda(z_1,\ldots, z_{2n+1})$ where $S_\lambda$ is the Schur function
computing the character of $\St_r$. 

Observe that the numbers $z_1, z_2, \ldots, z_{2n+1}$ are the roots of the polynomial
$$\sum_{i=0}^{2n+1}\binom{2n+1}{i}_q(-x)^i.$$
A result of Hutchinson \cite{Hut} states that all the roots of a polynomial $\sum_{i=0}^k
a_ix^i$ with positive real coefficients $a_i$ satisfying $a_i^2>4a_{i-1}a_{i+1}, i=1,\ldots, k-1$ are real and negative.
Hence the easily verified inequalities 
$$\binom{2n+1}{r}_q^2>4\binom{2n+1}{r-1}_q\binom{2n+1}{r+1}_q,\; r=1,2,\ldots, 2n$$
imply that the numbers $z_1, z_2, \ldots, z_{2n+1}$ are real and positive. Since $S_\lambda$ is a positive combination of monomials we conclude
that $\dim_k F(\St_r)>0$. Thus $F(\St_r)\ne 0$ and the functor $F$ extends uniquely to a tensor functor $\widetilde F: \Rep(SL_{2n+1})\to \Vecc$.

\end{example}

\subsection{Tilting modules over quantum groups}

Let $\mathfrak{g}$ be a simple complex Lie algebra and suppose $k$ has characteristic $0$. % where $l$ is coprime to all non-vanishing coefficients of the Cartan matrix  for $\mathfrak{g}$, and $l$ is bigger than the Coxeter number.
Consider Lusztig's quantum group $U_q(\mathfrak{g})$, where $q$ is a root of unity in~$k$. We let $\Tilt(U_q(\mathfrak{g}))\subset \Rep(U_q(\mathfrak{g}))$ be the category of tilting modules, see~\cite{An1}, 
and write $\St$ for the Steinberg module as defined in~\cite[\S 4.4]{An2}. 
By~\cite[Corollary 4.7]{An2}, every object in $\Rep(U_q(\mathfrak{g}))$ embeds into a tilting module, so Theorem~\ref{MainThmAbEnv} implies the following.
\begin{prop}\label{TiltQuant}
	The tensor category $\Rep(U_q(\mathfrak{g}))$ is the abelian envelope of $\Tilt(U_q(\mathfrak{g}))$.
\end{prop}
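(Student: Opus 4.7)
The plan is to invoke Theorem~\ref{MainThmAbEnv} directly, exactly as was done in Proposition~\ref{TiltEnv} for reductive groups in positive characteristic. To do so, I must verify that $\Tilt(U_q(\mathfrak{g}))$ is a strongly generating pseudo-tensor subcategory of $\Rep(U_q(\mathfrak{g}))$ in the sense of Definition~\ref{DefGen}(2).

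First I would confirm the pseudo-tensor subcategory conditions: $\Tilt(U_q(\mathfrak{g}))$ is a full additive subcategory of $\Rep(U_q(\mathfrak{g}))$, and it is closed under direct summands (idempotent completeness), tensor products, and duals; all of these closure properties are classical facts about tilting modules for the Lusztig quantum group at a root of unity, and are recorded in \cite{An1}. The tensor unit $\unit$ is itself a tilting module, so $\Tilt(U_q(\mathfrak{g}))$ is indeed a pseudo-tensor subcategory in the sense of~\ref{DefPseudo}.

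Next I would establish strong generation. By \cite[Corollary~4.7]{An2}, already cited in the preceding sentence of the paper, every object of $\Rep(U_q(\mathfrak{g}))$ embeds into a tilting module. Since Definition~\ref{DefGen}(2) allows us (via the monoidal dualities, as pointed out immediately after that definition) to replace ``quotient'' by ``subobject'', this is precisely the statement that $\Tilt(U_q(\mathfrak{g}))\subset\Rep(U_q(\mathfrak{g}))$ is strongly generating.

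Finally, Theorem~\ref{MainThmAbEnv} applies and yields that $\Rep(U_q(\mathfrak{g}))$ is the abelian envelope of $\Tilt(U_q(\mathfrak{g}))$. There is no substantive obstacle here: the nontrivial input, namely Andersen's embedding result, has already been established in the literature, and the rest is a direct application of the machinery developed earlier in the paper. The proof is essentially identical in structure to that of Proposition~\ref{TiltEnv}, with \cite[Corollary~4.7]{An2} replacing the role of \cite[Remark~3.3.2]{CEH} in the modular reductive setting.
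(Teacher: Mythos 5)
Your proof is correct and follows exactly the same route as the paper: the paper likewise deduces strong generation of $\Tilt(U_q(\mathfrak{g}))$ from \cite[Corollary~4.7]{An2} (every object embeds into a tilting module, which suffices by the duality remark after Definition~\ref{DefGen}) and then applies Theorem~\ref{MainThmAbEnv}. No issues.
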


%We expect that the above argument will work in the mixed case as well. 

Let $\bC$ be a pseudo-tensor category and let $F: \Tilt(U_q(\mathfrak{g}))\to \bC$ be a linear monoidal functor.

\begin{prop}\label{SteiTestQuant} If $F$ is not faithful, then $F(\St)=0$.%, where $\St:=H^0_q((l-1)\rho)$ is the Steinberg module.
\end{prop}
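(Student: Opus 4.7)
The proof will follow the template of Proposition~\ref{SteiTest}. Suppose $F$ is not faithful and pick $f\colon X\to Y$ in $\Tilt(U_q(\mathfrak{g}))$ with $f\ne 0$ and $F(f)=0$. First I would establish the quantum analogue of the splitting step used in Proposition~\ref{SteiTest}: the morphism $f\otimes \St$ splits in $\Tilt(U_q(\mathfrak{g}))$. This is a standard consequence of the fact that the Steinberg tilting module is projective-injective inside the tilting category at a root of unity, essentially due to Andersen (see~\cite{An1,An2}). From $F(f\otimes \St)=0$ and the splitting, we conclude $F(\im(f\otimes \St))=0$. Since the tensor product in $\Tilt(U_q(\mathfrak{g}))$ is exact and $f\ne 0$, the image $\im(f\otimes \St)=\im(f)\otimes \St$ is a non-zero tilting module and a summand of $Y\otimes \St$. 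Picking an indecomposable summand yields $T(\lambda)$ with $F(T(\lambda))=0$, and $T(\lambda)$ is necessarily negligible (i.e.\ $\lambda$ lies outside the fundamental alcove) since it is a summand of the negligible tilting $Y\otimes \St$.

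To conclude $F(\St)=0$, one has to bridge from an arbitrary negligible $T(\lambda)$ back to $\St$. In contrast to Proposition~\ref{SteiTest}, where one uses that $T((p^r-1)\rho)$ contains $T(\lambda)\otimes T((p^r-1)\rho-\lambda)$ as a direct summand for large $r$, here there is only a single Steinberg module, and $\lambda$ need not be dominated by $(\ell-1)\rho$. The resolution uses the classification of thick tensor ideals in $\Tilt(U_q(\mathfrak{g}))$: the ideal $\mathcal{N}$ of negligible tilting modules is the unique proper non-zero tensor ideal, and is generated as a tensor ideal by any of its non-zero indecomposable objects, in particular by $T(\lambda)$. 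Consequently $\St$ is a direct summand of $T(\lambda)\otimes V$ for some tilting module $V$, so $F(\St)$ is a direct summand of $F(T(\lambda))\otimes F(V)=0$, yielding $F(\St)=0$. The main obstacle is thus establishing (or citing) this tensor-ideal classification, which plays in the quantum setting the role played in the reductive setting by the explicit Steinberg-translation argument.
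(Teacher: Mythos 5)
The first half of your argument (splitting of $f\otimes\St$, a nonzero indecomposable tilting summand $T(\lambda)$ of $\im(f)\otimes\St$ killed by $F$) is fine and agrees with the paper. The gap is in the bridging step. The classification you invoke is false for higher rank: by Ostrik's cell-theoretic classification of tensor ideals in $\Tilt(U_q(\mathfrak{g}))$, the ideals are parametrised by two-sided cells of the affine Weyl group, and already for $\mathfrak{g}=\mathfrak{sl}_3$ there are \emph{two} distinct proper nonzero tensor ideals sitting inside one another. Consequently the negligible ideal is neither the unique proper nonzero ideal nor generated by an arbitrary nonzero indecomposable object of it. What you actually need is the weaker assertion that $\St$ lies in \emph{every} nonzero tensor ideal (equivalently, that $\langle\St\rangle$ is the unique minimal nonzero ideal); this is plausibly true, but it is exactly the kind of deep input (the full cell classification) that a proof of this proposition should not have to rest on, and you have not supplied it.

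The paper sidesteps all of this with an elementary observation that your argument is missing: for the nonzero tilting module $M:=\im(f\otimes\St)$ with $F(M)=0$, the monomorphism $\St\hookrightarrow M\otimes M^{\ast}\otimes\St$ induced by $\co_M$ is \emph{split}, because by \cite[Corollary 4.7]{An2} the cokernel $\coker(\co_M)\otimes\St$ is again a tilting module and the resulting short exact sequence of tilting modules splits. Hence $F(\St)$ is a direct summand of $F(M)\otimes F(M)^{\ast}\otimes F(\St)=0$, and one never needs to know anything about the lattice of tensor ideals. I would encourage you to replace your second paragraph with this direct argument; alternatively, if you want to keep the ideal-theoretic route, you must prove (or correctly cite) that $\St$ belongs to the tensor ideal generated by every nonzero negligible indecomposable tilting module, which is a genuinely harder statement than the one you asserted.
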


\begin{proof} Let $f: X\to Y$ be a morphism in $\Tilt(U_q(\mathfrak{g}))$ such that $F(f)=0$. 
By~\cite[Corollary 4.7]{An2} %and the fact that there are no extensions in $\Rep(U_q(\mathfrak{g}))$ between tilting modules 
it follows that $f\otimes \St$ splits and $F(M)=0$ for its image $M$. The monomorphism $\St\hookrightarrow M\otimes M^{\ast} \otimes \St$ induced from $\co_M$ is split, showing that $F(\St)=0$.
\end{proof}

%The combination of Propositions~\ref{TiltQuant} and~\ref{SteiTestQuant} now yields.
\begin{corollary} Assume $\bC$ is a tensor category and $F(\St)\ne 0$. Then
	$F$ uniquely extends to a tensor functor $\tilde F: \Rep(U_q(\mathfrak{g}))\to \bC$.
\end{corollary}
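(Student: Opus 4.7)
The proof is essentially a direct combination of the two preceding propositions, exactly parallel to the reductive-group corollary. The plan is as follows.

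First, I would observe that the hypothesis $F(\St)\ne 0$ together with the contrapositive of Proposition~\ref{SteiTestQuant} forces $F$ to be faithful as a linear monoidal functor $\Tilt(U_q(\mathfrak{g}))\to \bC$. This is the only nontrivial input, and it is the step that actually uses the assumption about the Steinberg module; everything else is formal.

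Next, I would invoke Proposition~\ref{TiltQuant}, which asserts that $\Rep(U_q(\mathfrak{g}))$ is the abelian envelope of $\Tilt(U_q(\mathfrak{g}))$. By Definition~\ref{DefAbEnv}, composition with the inclusion $\Tilt(U_q(\mathfrak{g}))\hookrightarrow \Rep(U_q(\mathfrak{g}))$ yields an equivalence
\[
\Tens(\Rep(U_q(\mathfrak{g})),\bC)\;\simeq\; \PTens^{faith}(\Tilt(U_q(\mathfrak{g})),\bC).
\]
Since $F$ now lies in $\PTens^{faith}(\Tilt(U_q(\mathfrak{g})),\bC)$, this equivalence produces a tensor functor $\widetilde F:\Rep(U_q(\mathfrak{g}))\to \bC$, unique up to unique monoidal isomorphism, whose restriction to $\Tilt(U_q(\mathfrak{g}))$ is $F$.

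There is essentially no obstacle: the content has been absorbed into Propositions~\ref{TiltQuant} and~\ref{SteiTestQuant} and into the definition of an abelian envelope. The only thing to check, if one wants to be completely explicit, is the routine fact that a tensor functor extending $F$ is automatically faithful (hence exact by Theorem~\ref{Thm1}), so the extension indeed defines a 1-morphism in $\Tens$; but this is immediate from the faithfulness of $F$ on the generating subcategory.
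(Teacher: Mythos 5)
Your proposal is correct and is exactly the argument the paper intends: the hypothesis $F(\St)\neq 0$ forces faithfulness via Proposition~\ref{SteiTestQuant}, and then the universal property of the abelian envelope from Proposition~\ref{TiltQuant} produces the unique tensor extension. The paper leaves this combination implicit (as it does for the analogous reductive-group corollary), so there is nothing to add.
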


\subsection{Non-degenerate pseudo-tensor categories}

A pseudo-tensor category $\bD$ is non-degenerate if one of the following equivalent conditions holds:
\begin{enumerate}[label=(\alph*)]
\item For every nonzero $f:X\to Y$ in $\bD$, there exists $g:Y\to X^{\ast\ast}$ for which $\Tr^L(g\circ f)\not=0$.
\item Every morphism $U\to \unit$ is split.
\item Every morphism $\unit\to U$ is split.
\end{enumerate}
\begin{lemma}
If $\bD$ is a non-degenerate pseudo-tensor category, then $\cU=\cU^{ex}$ and $\bD$ is strongly generating in a tensor category if and only if $\bD$ is a tensor category itself.
\end{lemma}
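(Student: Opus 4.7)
I would prove the two assertions separately.

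For $\cU=\cU^{ex}$, the key ingredient is the section $s\colon\unit\to U$ of $u$ provided by non-degeneracy condition (b): given any $f\colon U\to V$ with $f\circ(u\otimes U)=f\circ(U\otimes u)$, I precompose both sides by $s\otimes U$. A short computation, using $(u\otimes U)\circ(s\otimes U)=\id_U$ and $(U\otimes u)\circ(s\otimes U)=s\circ u$ (after identifying $\unit\otimes U$ with $U$ via the unitor), then yields $f=(f\circ s)\circ u$. This provides the required factorisation through $u$, and uniqueness is automatic since $u$ is a split epimorphism.

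For the second assertion, the ``if'' direction is immediate since any tensor category is strongly generating in itself. For the converse, assume $\bD\subset\bT$ is strongly generating in a tensor category $\bT$; the goal is to show $\bD=\bT$. The first step is to establish that $\bD$ is itself a tensor category by proving that every morphism in $\bD$ is split. Given nonzero $f\colon X\to Y$ in $\bD$, the adjunction $\bD(X,Y)\cong\bD(X\otimes Y^\ast,\unit)$ produces a nonzero morphism $u_f\colon X\otimes Y^\ast\to\unit$, which is split by non-degeneracy. The section $\sigma\colon\unit\to X\otimes Y^\ast$ translates via the corresponding adjunction to a morphism $g\colon Y\to X$, and a zigzag computation based on $u_f\circ\sigma=\id_\unit$ should yield $f\circ g\circ f=f$. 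Combined with pseudo-abelianness and rigidity, this forces $\bD$ to be semisimple abelian, hence a tensor category.

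Once $\bD$ is a tensor category, the faithful linear monoidal inclusion $\bD\hookrightarrow\bT$ is exact by Theorem~\ref{Thm1}, so $\bD$ is an abelian subcategory of $\bT$. For any $X\in\bT$, strong generation provides an epimorphism $V\twoheadrightarrow X$ with $V\in\bD$; applying strong generation to $X^\ast$ and dualising (using ${}^\ast(X^\ast)=X$ in any rigid category) gives a monomorphism $X\hookrightarrow W$ with $W\in\bD$. The composition $V\to X\hookrightarrow W$ is a morphism in $\bT$ between objects of $\bD$, hence lies in $\bD$ by full faithfulness. Its image, computed in $\bT$, equals $X$; since $\bD$ is closed under images in $\bT$, we conclude $X\in\bD$ and thus $\bD=\bT$. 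The hard part will be the semisimplicity step: translating the splitting of $u_f$ into the identity $fgf=f$ demands a careful zigzag computation with the rigid duality data, and the remaining arguments follow routinely from standard tensor-categorical manipulations.
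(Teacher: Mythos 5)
Your argument for $\cU=\cU^{ex}$ is fine and is essentially the paper's (split morphism to the indecomposable $\unit$ gives a section $s$, which forces the coequaliser property). The gap is in the semisimplicity step of the converse direction, and it is not just a computation you have deferred: it cannot work. If $u_f$ is the morphism to $\unit$ corresponding to $f\colon X\to Y$ under the duality adjunction, then a section $\sigma$ of $u_f$ corresponds to a morphism $g\colon Y\to X$ (or $Y\to X^{\ast\ast}$) satisfying $\Tr^L(g\circ f)=1$, \emph{not} $f\circ g\circ f=f$. This is exactly the content of the equivalence between conditions (a) and (b) in the definition of non-degeneracy, so your construction only reproduces condition (a). Already in $\Vecc_k$ with $f=\id_{k^2}$ one can take $\sigma$ so that $g$ is a rank-one idempotent of trace $1$, and then $fgf=g\neq f$. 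Passing from non-degeneracy of the trace pairing to splitness of all morphisms is genuinely harder (compare Lemma~\ref{LemND}, which needs finite-dimensional Hom spaces and a pivotal ambient category, and argues via the Jacobson radical), and in any case it is essentially the conclusion you are trying to reach.

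The strong generation hypothesis is what rescues the statement, and the paper uses it at precisely this point. Given an epimorphism $p\colon X\twoheadrightarrow\unit$ in $\bT$, choose an epimorphism $q\colon V\twoheadrightarrow X$ with $V\in\bD$; then $p\circ q$ is a nonzero morphism $V\to\unit$ lying in $\bD$ by fullness, hence split, hence a split epimorphism since $\unit$ is indecomposable, and its section yields a splitting of $p$. Thus $\Ext^1_{\bT}(\unit,B)=0$ for all $B$, and by rigidity $\Ext^1_{\bT}(A,B)\simeq\Ext^1_{\bT}(\unit,B\otimes A^\ast)=0$, so $\bT$ is semisimple. Every object of $\bT$ is then a direct summand of an object of $\bD$ (the defining quotient map splits), so $\bD\simeq\bT$ by idempotent completeness; this replaces your epi--mono--image argument and avoids having to establish abelianness of $\bD$ beforehand.
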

\begin{proof}
The first statement follows immediately from version (b) of the definition (and the fact that $\unit$ is indecomposable). Now assume that $\bD$ strongly generates a tensor category $\bT$. It follows that every epimorphism $X\tto\unit$ in $\bT$ splits. Since
$$\Ext^1(A,B)\;\simeq\;\Ext^1(\unit, B\otimes A^\ast),$$
for all $A,B\in \bT$, this means that every morphism in $\bT$ is split. By assumption, every object in $\bT$ is a quotient of an object in $\bD$. The above thus means that every object in $\bT$ is a direct summand of one in $\bD$, whence $\bD\simeq\bT$.
\end{proof}

%\footnote{I don't think we can apply [AK] to show that $\bD$ has no abelian envelope, since we make no assumptions about finite dimensionality of morphism spaces}

\begin{lemma}\label{LemND}
	Let $\bD$ be a non-degenerate pseudo-tensor category with finite dimensional morphism spaces, and suppose $\bT$ is a tensor category equipped with a natural isomorphism $\psi:\id\stackrel{\sim}{\Rightarrow}(-)^{\ast\ast}$. If there exists a fully faithful functor from $\bD$ to $\bT$, then $\bD$ is a (semisimple) tensor category. 
\end{lemma}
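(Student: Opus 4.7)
The plan is to reduce the claim to showing that every endomorphism algebra $A:=\End_{\bD}(Y)$, for $Y\in\bD$, is semisimple; once this is established, \cite[Lemma~2]{Jannsen} (the same result used in the proof of Theorem~\ref{PropSSE}) forces $\bD$ to be a semisimple tensor category.

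First I would exploit full faithfulness to translate the non-degeneracy condition (a) into a non-degenerate bilinear pairing on $A$. Since $\bD\hookrightarrow\bT$ is fully faithful (and monoidal, hence preserves duals), we have $A=\End_{\bT}(Y)$ and $\Hom_{\bD}(Y,Y^{\ast\ast})=\Hom_{\bT}(Y,Y^{\ast\ast})$. The isomorphism $\psi_Y:Y\xrightarrow{\sim}Y^{\ast\ast}$ in $\bT$ then writes every $g:Y\to Y^{\ast\ast}$ uniquely as $\psi_Y\circ h$ for $h\in A$, so
$$\Tr^L(g\circ f)\;=\;\Tr^L(\psi_Y\circ h\circ f)\;=\;\alpha_Y(h\circ f),$$
with $\alpha_Y$ the functional of Lemma~\ref{LemTr}. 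Taking $X=Y$ in condition~(a) therefore says exactly that the bilinear form $(f,h)\mapsto\alpha_Y(h\circ f)$ on $A\times A$ is non-degenerate in the first argument.

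Next I would invoke Lemma~\ref{LemTr}, which tells us that $\alpha_Y$ vanishes on nilpotent morphisms. Because $A$ is finite dimensional, its Jacobson radical $J$ is a nilpotent two-sided ideal. For any $f\in J$ and any $h\in A$, the composite $h\circ f$ again lies in $J$ and is therefore nilpotent, so $\alpha_Y(h\circ f)=0$. The non-degeneracy of the pairing established in the previous paragraph then forces $f=0$, whence $J=0$ and $A$ is semisimple.

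I do not expect a genuine obstacle; the only subtlety is bookkeeping about where the objects and morphisms live. The functional $\alpha_Y$ and the trace $\Tr^L$ are defined in $\bT$, whereas condition (a) speaks of traces in $\bD$, but full faithfulness together with the fact that $\Tr^L$ is built purely from the (co)evaluation data of the monoidal structure ensures that the two computations coincide. With the semisimplicity of each $A$ in hand, the finite dimensionality hypothesis together with \cite[Lemma~2]{Jannsen} completes the argument, yielding the parenthetical refinement that $\bD$ is in fact semisimple.
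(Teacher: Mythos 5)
Your proposal is correct and follows essentially the same route as the paper: reduce to semisimplicity of each endomorphism algebra via \cite[Lemma~2]{Jannsen}, use non-degeneracy together with $\psi$ to produce a nonzero trace $\Tr^L(\psi_Y\circ h\circ f)$ for a nonzero $f$ in the Jacobson radical, and derive a contradiction with Lemma~\ref{LemTr} since $h\circ f$ is nilpotent. Your explicit bookkeeping (reading the embedding as monoidal so that traces in $\bD$ and $\bT$ agree, and phrasing non-degeneracy as a pairing on $\End(Y)$) only makes precise what the paper's terser proof leaves implicit.
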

\begin{proof}
	Suppose $\bD$ embeds fully faithfully in $\bT$. Since $\bD$ has finite dimensional morphism spaces, it suffices to show that $\End(X)$ is semisimple for every object $X$ in $\bD$. Let $f$ be in the Jacobson radical of $\End(X)$ and suppose $f\not= 0$. By non-degeneracy, we can find a morphism $h:X\to X$ for which $\Tr^L(\psi_Y\circ h\circ f)\not=0$ in $\bT$. This contradicts~Lemma~\ref{LemTr}. 
\end{proof}

\begin{corollary}
	A triangulated pseudo-tensor category $\bD$ with a fully faithful embedding into a pivotal artinian tensor category is a (semisimple) tensor category.
\end{corollary}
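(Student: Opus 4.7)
The plan is to reduce the claim to Lemma~\ref{LemND} by verifying each of its hypotheses.

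First, I would show that $\bD$ is non-degenerate by checking characterisation (b) from the definition of non-degeneracy preceding Lemma~\ref{LemND}. Since the given embedding $\bD \hookrightarrow \bT$ is fully faithful (hence faithful) and $\bD$ is triangulated, Example~\ref{ExTria} applies and yields that every nonzero morphism $u : U \to \unit$ in $\bD$ is a split epimorphism. Such a $u$ admits a section $s : \unit \to U$ with $u \circ s = \id_\unit$, so $u = u \circ s \circ u$, hence $u$ is split in the sense of Section~\ref{SecPrel}. The zero morphism is trivially split, so condition (b) holds for all morphisms $U \to \unit$, giving non-degeneracy.

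Next, the artinian hypothesis on $\bT$ ensures that all its Hom-spaces are finite-dimensional over $k$, which transfers to $\bD$ via full faithfulness. A pivotal structure on $\bT$ provides a monoidal, and in particular natural, isomorphism $\psi : \id \stackrel{\sim}{\Rightarrow} (-)^{\ast\ast}$, which is exactly the data that Lemma~\ref{LemND} requires (the lemma does not use monoidality of $\psi$). Applying that lemma, we conclude that $\bD$ is a (semisimple) tensor category.

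The argument is essentially an assembly of previously established results and no serious obstacle arises. The only minor point worth flagging is the trivial translation between ``every nonzero morphism $U \to \unit$ is a split epimorphism'' (as produced by Example~\ref{ExTria}) and ``every morphism $U \to \unit$ is split in the paper's sense'' (as needed to invoke non-degeneracy), which amounts to absorbing the zero morphism and re-expressing a section as the factorisation $u = u g u$.
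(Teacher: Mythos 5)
Your proof is correct and follows exactly the paper's route: invoke Example~\ref{ExTria} to get non-degeneracy (via condition (b)) and then apply Lemma~\ref{LemND}, with the pivotal structure supplying $\psi$ and the artinian hypothesis supplying finite-dimensional Hom-spaces. The extra details you supply (absorbing the zero morphism, rewriting a section as $u=ugu$) are accurate but the paper treats them as immediate.
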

\begin{proof}
By Example~\ref{ExTria}, $\bD$ is non-degenerate, so we can apply Lemma~\ref{LemND}.
\end{proof}

\appendix

\subsection*{Acknowledgement}
The work of K.C. and B.P. was supported by grant DP180102563 of the Australian Research Council. The work of P.E. was partially supported by the NSF grant DMS-1916120. The work of V. O. was partially supported by the NSF grant DMS-1702251 and by the Russian Academic Excellence Project `5-100’.

\end{document}